\newtheorem{theo}{Theorem}
\newtheorem{lem}{Lemma}
\newtheorem{propo}{Proposition}
\theoremstyle{definition}
\newtheorem{rem}{Remark}
\numberwithin{equation}{section}
\numberwithin{theo}{section}
\numberwithin{lem}{section}
\numberwithin{coro}{section}
\numberwithin{exemple}{section}
\numberwithin{rem}{section}
\numberwithin{propo}{section}
\numberwithin{definition}{section}
\numberwithin{theoFr}{section}
\numberwithin{coroFr}{section}
\numberwithin{definitionFr}{section}
\newcommand{\E}{E}
\newcommand{\V}{\mbox{Var}}
\newcommand{\Cov}{\mbox{Cov}}
\newcommand{\VaR}{\mbox{VaR}}
\newcommand{\PP}{\mathbb{P}}
\newcommand{\arrowloi}{\stackrel{\mathcal{L}}{\rightarrow}}
\newcommand{\eps}{\varepsilon}
\begin{document}
\selectlanguage{english}

\title{{
Maximum Likelihood Estimation for Conditionally Heteroscedastic
Models when the Innovation Process is in the Domain of Attraction of
a Stable Law }}
\author{
{\sc Guillaume Lepage\footnote{CREST, 15 boulevard Gabriel Péri,
92245 Malakoff Cedex, France. E-mail: guillaume.lepage@ensae.fr}}}

\date{} \maketitle
\begin{quote}
\begin{center}

\vspace{10pt}

\textbf{Abstract}
\end{center}
{\small \hspace{1em} We prove the strong consistency and the
asymptotic normality of the maximum likelihood estimator of the
parameters of a general conditionally heteroscedastic model with
$\alpha$-stable innovations. Then, we relax the assumptions and only
suppose that the innovation process converges in distribution toward
a stable process. Using a pseudo maximum likelihood estimator with a
stable density, we also obtain the strong consistency and the
asymptotic normality of the estimator. This framework seems relevant
for financial data exhibiting heavy tails. We apply this method to
several financial index and compute stable Value-at-Risk.}
\end{quote}

\noindent  {\it  JEL Classification:}  C12, C13 and C22

\medskip

\noindent {\it Keywords:}  Conditional Heteroscedasticity, Maximum
Likelihood Estimation, Stable Law, Domain of attraction, Value-at-Risk.

\newpage

\section{Introduction}

ARCH models, introduced by \citet{engle1982ach} and generalized by
\citet{bollerslev1986generalized} are some of the most popular
models for explaining financial time series. In these models, the
time series is stationary but possesses a time varying conditional
variance, this property can be used to explain some of the stylized
facts that can be found in financial series. The GARCH modeling
explains the volatility clustering but it also explains a fraction
of the leptokurticity that can be found in financial time series.
Empirical evidences can be found in the survey article by
\citet{shephard1996statistical}. The most widely used estimator for
the parameters of the GARCH model is the Gaussian Quasi Maximum
Likelihood Estimator (QMLE). To implement this estimator, the
Gaussian density is used to compute the likelihood of the model,
even if the exact distribution of the error process remains
unspecified. Under appropriate assumptions, the Gaussian QMLE is
Consistent and Asymptotically Normal (CAN), see
\citet{berkes2003gps} or \citet{FZ2004mle}.

Most of the assumptions required for the Gaussian QMLE to be CAN are
mild, since one does not need to specify the true distribution of
the error process, the model is less risky to be misspecified as in
the Maximum Likelihood Estimation (MLE) case. The only assumption
that can be challenged is the requirement that the error process
possesses a finite fourth moment. The GARCH model and its
derivatives are mostly applied to financial data which are known to
be heavy tailed. \citet{mandelbrot1963variation} and
\citet{fama1965behavior} found that the unconditional distributions
of most financial returns are heavy tailed and therefore do not
necessarily possess a finite fourth moment. Now even if the GARCH
modeling explains a part of the leptokurticity of the financial time
series, the residuals are often found to remain heavy tailed. For
this reason, there were several attempts to use GARCH models with
non-Gaussian innovation, see \citet{berkes2004efficiency} for a
general approach. GARCH models with heavy tailed distributions have
been studied, \citet{bollerslev1987conditionally} use the student
\emph{t} distribution and \citet{liu1995maximum} used an
$\alpha$-stable distribution for the error process and studied the
model empirically, see also \citet{mittnik2003prediction},
\citet{embrechts1997modelling}.

In this paper we study a stable Maximum Likelihood Estimator (MLE) of a general conditionally heteroskedastic model in which the errors follow a stable distribution. To the best knowledge of the author, the CAN
property of the MLE of such a model with stable innovation has not
been proven, even in the GARCH case where the model was only studied
empirically. Here we prove such a result under a few assumptions about the functional form of the volatility process. By specifying the distribution of the error process $(\eta_t)_t$ to be $\alpha$-stable, we obtain a less general method than the Gaussian QMLE but we do not need any moment assumption and the model takes
into account the fact the data can be heavy tailed.

The Gaussian QMLE possesses the robustness property that even if the errors are not Gaussian, provided that their distribution is in the domain of attraction of
the Gaussian law, the QML estimator is still CAN. We want to obtain
a similar property for the stable GARCH model. Since the only
probability distributions to possess a domain of attraction are the
Gaussian distribution and the family of stable laws, we use this
fact to obtain a robustness property for the stable estimation. In other words, we study the asymptotic behavior of the MLE written for stable innovations when the error process is not stable but close to a stable distribution. With the Generalized
Central Limit Theorem (GCLT) (see \citet{gnedenko1968limit}), we can
characterize the domain of attraction of a stable law. A sum of
i.i.d random variables with certain properties will converge in
distribution to a stable variable. If the innovation process can be
written as a sum of variables, then if the sum converges, it
converges in distribution toward a stable law. We use this property
to give a more general result than the stable MLE. We prove that if
the innovation process is not stable but converges in distribution
to a stable variable, then the stable MLE (which in this case is a
pseudo MLE) is still CAN.

We will study a general class a conditionally heteroscedastic model,
defined by
\begin{equation}
    \label{eq: intro, equation du modèle}
    \left\{
        \begin{aligned}
            \epsilon_t &= \sigma_t \eta_t\\
            \sigma_t &= g\left(\epsilon_{t-1},\epsilon_{t-2},\ldots;\theta_0\right),
        \end{aligned}
    \right.
\end{equation}
where $(\epsilon_t)_t$ is the observed process ( $\epsilon_t\in
\mathbb{R}$), $(\eta_t)_t$ is a sequence of independent and
identically (i.i.d) random variables (the error process), $\theta_0$
is a parameter belonging to a parameter space $\Theta$ and $g:\
\mathbb{R}^{\infty}\times\Theta \mapsto \mathbb{R}_+^*$. This model
contains most of the numerous derivatives of the GARCH model that have been
introduced such as EGARCH, TGARCH and many others, see
\citet{bollerslev2008glossary} for a exhaustive (at the time) list.
Model~\eqref{eq: intro, equation du modèle} contains the classical
GARCH$(p,q)$ model given by
\begin{equation}
    \label{eq: intro, equation du modèle GARCH(p,q)}
    \sigma_t^2 = \omega + \sum_{i=1}^q a_i \epsilon_{t-i}^2
        +\sum_{j=1}^p b_j \sigma_{t-j}^2.
\end{equation}

The plan of this paper is as follows. We recall useful results concerning the stable distribution in Section 2. In the third section, we study a conditional heteroscedastic model with stable innovation and
prove that the MLE is stable. In Section 4, we consider the
case where the stable density is used to compute a pseudo MLE when
the error process is not stable but converges in distribution toward
a stable process. Then, we present in Section 5 some simulation results and
some financial applications.

\section{Properties of stable distributions}
\label{sec: stable}

Since the pioneer work of Mandelbrot, the class of stable
distributions is commonly used in finance and in other areas such as
engineering, signal processing and many other. There are
empirical evidences that some financial processes, denoted $(X_t)_t$,
have regularly varying (heavy) tails, that is, $\PP\left[X_t
>x\right] \sim K x^{-\alpha}$, when $|x|\rightarrow +\infty$, where
$K$ is a constant and $\alpha\in\left(0,2\right)$. Such a process
has infinite variance, therefore the standard Central Limit Theorem (CLT)
cannot be applied. Fortunately, the CLT can be generalized. An iid
random process $(X_t)_t$ with regularly varying tails with index
$\alpha<2$ is in the domain of attraction of a stable law, i.e.
that there exist sequences $(a_n)_n$ and $(b_n)_n$ such that
\begin{equation*}
    \frac{X_1+\cdots+X_n}{a_n}-b_n \arrowloi Y,\ \mbox{where}
    \ Y\ \mbox{is a stable law with tail parameter $\alpha$}.
\end{equation*}
Only stable distributions possess a domain of attraction.
Obviously, the Gaussian law is a stable distribution since the CLT
states that every random variable with finite variance is in the
domain of attraction of the Gaussian law. The Gaussian distribution is
a particular case of a stable distribution with $\alpha=2$.

The formal definition of a stable variable is quite simple: non
degenerate iid random variables $(Z_t)_t$ are stable if there exist
$a_n>0$ and $b_n$ such that $\frac{Z_1+\cdots+Z_n}{a_n}-b_n
\stackrel{\mathcal{L}}{=} Z_1$. For a stable law, there exists, in
general, no closed form of the probability density function. A stable
variable is characterized by four parameters, the previously
mentioned tail exponent $\alpha$, a parameter of asymmetry
$|\beta|\leq 1$, a location parameter $\mu\in\mathbb{R}$ and a scale
parameter $\gamma>0$. When $\beta=0$, the distribution is symmetric
about $\mu$. There are several special cases apart from the Gaussian case with $\alpha=2$, where the density is explicit. A stable distribution with $\alpha=1$ and $\beta=0$ is a
Cauchy distribution. When $\alpha=1/2$ and $\beta=1$, we obtain a Lévy distribution. 

Though the density of a stable variable cannot be written in closed
form in the general case, we can write its characteristic function,
in the (M) parametrization of \citet{zolotarev1986one}. The
random variable $X$ is called stable with parameter
$\psi=(\alpha,\beta,\mu,\gamma)$ (we write $X\sim
S(\psi)=S(\alpha,\beta,\mu,\gamma)$) if
\begin{equation}
    \label{eq: stable, characteristic}
    \varphi_\psi(t) = \E\left[\exp itX\right] =
    \left\{ \begin{aligned}
        &\exp\left\{-\left|\gamma t\right|^{\alpha}
            +\gamma ^\alpha i \beta\tan\left(\frac{\alpha\pi}{2}\right)
            t\left(|t|^{\alpha-1}-1\right) + i\mu t\right\}
            \ &\text{if}\ \alpha \neq 1,\\
        &\exp\left\{-\left|\gamma t\right| - \gamma i \beta t \frac{2}{\pi} \log|\gamma t| 
            +i\mu t\right\}\ &\text{if}\ \alpha = 1.\\
    \end{aligned}\right.
\end{equation}
For other parameterizations or more properties on stable distributions, see
\citet{zolotarev1986one} or \citet{samorodnitsky1994stable}. The
parameterization in~\eqref{eq: stable, characteristic} possesses the advantage of being continuous and differentiable with respect to all parameters, even for
$\alpha=1$. Using the inverse Fourier transform, we can express the
density $f(.,\psi)$ with the characteristic function
\begin{equation}
    \label{eq: stable, inverse Fourier}
    f(x,\alpha,\beta,\mu,\gamma) =
    f(x,\psi) = \frac{1}{2\pi}\int_{\mathbb{R}}e^{-itx}\varphi_\psi(t)dt.
\end{equation}
From \citet{bergstrom1952some}, we give a series expansion of the
stable density which will be useful to easily obtain properties of
the stable distribution and to numerically compute the stable
density.

\begin{propo}
    \label{prop: développement en série}
    For $\alpha <1$, we have
    \begin{eqnarray}
        f(x,\alpha,\beta,0,1) &=& \frac{1}{\pi}\sum_{k\geq 1}(-1)^{k+1}
        \frac{\Gamma(k\alpha+1)}{k!} \frac{\left(1+\tau^2\right)^{k/2}}{\left|x+\tau\right|^{k\alpha+1}}\nonumber\\
        &&\times \sin\left[k\left(\arctan\tau+\frac{\alpha\pi}{2}\right)
        +(k\alpha+1)\pi\mathds{1}_{x<-\tau}\right],
        \label{eq: stable, développement en série alpha < 1}
    \end{eqnarray}
    with $\tau=\beta\tan\frac{\alpha\pi}{2}$. For $\alpha>1$,
    the latter series does not converge but the partial sum of
   ~\eqref{eq: stable, développement en série alpha < 1}
    provides an asymptotic expansion when $|x|$ tends to infinity, i.e. the remainder term has smaller order of magnitude (for large $|x|$) than the last term in the partial sum.
    In the case $\alpha>1$, we have another convergent series expansion
    given by
    \begin{equation}
        \label{eq: stable, développement asymptotique alpha >1}
        f(x,\alpha,\beta,0,1) = \frac{1}{\alpha\pi}\sum_{k\geq 0}
        \frac{\Gamma\left(\frac{k+1}{\alpha}\right)}{k!}x^k
        \cos\left[\frac{k+1}{\alpha}\arctan\tau-\frac{k\pi}{2}\right].
    \end{equation}
\end{propo}


This proposition can be proven as in Bergström, the parameterization
differs but the idea is the same. These series expansions will be
used to numerically compute the stable density. Depending on the
parameters $\alpha$ and $\beta$ and on the value of $x$, the series
\eqref{eq: stable, développement en série alpha < 1} or~\eqref{eq:
stable, développement asymptotique alpha >1} will efficiently
approximate the density $f$. If these series do not provide a good estimation, we can use the Fast-Fourier Transform (FFT) or the Laguerre quadrature, see
\citet{nolan1997numerical} or \citet{matsui2006some}.

In the last proposition, the parameters $\mu$ and $\gamma$ were
fixed to 0 and 1. To obtain a more general formula we can use the
following relation
\begin{equation}
    \label{eq: stable, relation param densité}
    f(x,\alpha,\beta,\mu,\gamma) = \frac{1}{\gamma}
    f\left(\frac{x-\mu}{\gamma},\alpha,\beta,0,1\right).
\end{equation}

From~\eqref{eq: stable, inverse Fourier},~\eqref{eq: stable,
développement en série alpha < 1} and~\eqref{eq: stable, relation
param densité}, it follows that the density of a stable random
variable is infinitely often differentiable with respect to $x$, $\alpha$,
$\beta$ and $\mu$. From the asymptotic expansion~\eqref{eq:
stable, développement en série alpha < 1}, we have the tail behavior
of the density $f$ and all its derivatives. When $x\rightarrow
\pm\infty$,
\begin{align}
    f(x,\psi) &\sim K |x|^{-\alpha-1}, \label{eq:tail behavior 1}\\
    f'(x,\psi) = \frac{\partial f}{\partial x}(x,\psi) &\sim K |x|^{-\alpha-2},
        \label{eq:tail behavior 2}\\
    \frac{\partial f}{\partial \alpha}(x,\psi)&\sim K \log(|x|)|x|^{-\alpha-1},
    \label{eq:tail behavior 3}\\
    \frac{\partial f}{\partial \beta}(x,\psi)&\sim K |x|^{-\alpha-1},
    \label{eq:tail behavior 4}\\
    \frac{\partial f}{\partial \mu}(x,\psi) &\sim K |x|^{-\alpha-2},
    \label{eq:tail behavior 5}
\end{align}
where $K$ is a generic constant, which is not necessarily the same
depending on whether $x\rightarrow+\infty$ or $x\rightarrow-\infty$.

The idea of using stable laws comes from the fact that only a stable
variable possesses a domain of attraction. The Gaussian distribution
is a particular case of stable distribution (with $\alpha=2$), its
domain of attraction contains all distributions with finite
variance. The following is a  CLT for heavy tailed
regularly varying distributions in the particular case of a variable in the domain of normal attraction of a stable law.

\begin{theo}[\citet{gnedenko1968limit}, Theorem 5, $\S$ 35]
    \label{thm: stable, GTCL}
    If the process $(X_t)_t$ is iid with
    \begin{align}
        \PP\left[X_t >x\right] &\sim K_1 x^{-\alpha}\ \mbox{when}\ x\rightarrow\ +\infty,
            \label{eq: stable, conditions domaine d'attraction 1}\\
        \PP\left[X_t <x\right] &\sim K_2 |x|^{-\alpha}\ \mbox{when}\ x\rightarrow\ -\infty,
            \label{eq: stable, conditions domaine d'attraction 2}
    \end{align}
    with $\alpha\in(0,2)$, $K_1>0$ and $K_2>0$, then, with
    \begin{align*}
        \beta &= \frac{K_1-K_2}{K_1+K_2},\
        a = \left\{-\alpha M(\alpha)(K_1+K_2)\cos\frac{\alpha\pi}{2}\right\}^{1/\alpha},\\
        M(\alpha) &= \left\{
            \begin{aligned}
                -\frac{\Gamma(1-\alpha)}{\alpha},\ &\mbox{when}\ \alpha <1,\\
                \frac{\Gamma(2-\alpha)}{\alpha(\alpha-1)},\ &\mbox{when}\ \alpha >1,
            \end{aligned}
            \right.
    \end{align*}
    we have
    \begin{equation}
        \frac{1}{an^{1/\alpha}}\sum_{t=1}^n \left(X_t-m\right)
            \arrowloi Z, \label{eq: stable, result GTCL}
    \end{equation}
    where $m=\E X_1$ if $\alpha>1$, $m=0$ if $\alpha<1$ and $Z\sim
    S(\alpha,\beta,\beta\tan\frac{\alpha\pi}{2},1)$.
\end{theo}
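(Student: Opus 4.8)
The plan is to prove this generalised central limit theorem through characteristic functions and L\'evy's continuity theorem, so that everything reduces to controlling $\varphi(t)=\E[e^{itX_1}]$ near $t=0$. First I would write
\[
1-\varphi(t)=\int_{\mathbb R}\bigl(1-e^{itx}\bigr)\,dF(x),
\]
where $F$ is the law of $X_1$, and split the integral at $|x|=1/|t|$. On $\{|x|\le 1/|t|\}$ I expand $1-e^{itx}$ and integrate by parts against the exact power-law tails of $F$ supplied by \eqref{eq: stable, conditions domaine d'attraction 1}--\eqref{eq: stable, conditions domaine d'attraction 2}; on $\{|x|>1/|t|\}$ those tails enter directly. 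The elementary integrals that appear, of the type $\int_0^\infty(1-\cos u)\,u^{-\alpha-1}\,du$ and its sine analogue, have values expressed through $\Gamma$-functions and $\cos\frac{\alpha\pi}{2}$ --- this is exactly where $M(\alpha)$ comes from --- and, after including the linear term $i(\E X_1)t$ when $\alpha>1$, one obtains, as $t\to 0$,
\[
\varphi(t)=1+imt-|at|^{\alpha}\bigl(1-i\beta\,\mathrm{sgn}(t)\tan\tfrac{\alpha\pi}{2}\bigr)+o(|t|^{\alpha}),
\]
with precisely the constants $m$, $a$, $\beta$ of the statement: the real part of the $|t|^{\alpha}$-coefficient is proportional to $K_1+K_2$ and its imaginary part to $K_1-K_2$, so the ratio is $\beta=(K_1-K_2)/(K_1+K_2)$ and the common normalisation is the scale $a$.

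Given this expansion the rest is short. By independence,
\[
\E\bigl[e^{itS_n}\bigr]=\Bigl(\varphi\bigl(\tfrac{t}{an^{1/\alpha}}\bigr)\,e^{-imt/(an^{1/\alpha})}\Bigr)^{n},\qquad S_n=\frac{1}{an^{1/\alpha}}\sum_{k=1}^n(X_k-m).
\]
Setting $s=t/(an^{1/\alpha})\to 0$, the factor $e^{-ims}$ cancels the linear term of $\varphi(s)$ up to $o(|s|^{\alpha})$ (since $\alpha<2$), while $|as|^{\alpha}=|t|^{\alpha}/n$, so
\[
\E\bigl[e^{itS_n}\bigr]=\Bigl(1-\tfrac1n|t|^{\alpha}\bigl(1-i\beta\,\mathrm{sgn}(t)\tan\tfrac{\alpha\pi}{2}\bigr)+o(\tfrac1n)\Bigr)^{n}\;\longrightarrow\;\exp\Bigl\{-|t|^{\alpha}\bigl(1-i\beta\,\mathrm{sgn}(t)\tan\tfrac{\alpha\pi}{2}\bigr)\Bigr\}.
\]
Comparing with \eqref{eq: stable, characteristic} for $\gamma=1$ and $\mu=\beta\tan\frac{\alpha\pi}{2}$ (so the location term exactly absorbs the ``$-1$'' inside $t(|t|^{\alpha-1}-1)$), the limit is the characteristic function of $Z\sim S(\alpha,\beta,\beta\tan\frac{\alpha\pi}{2},1)$, and L\'evy's continuity theorem gives \eqref{eq: stable, result GTCL}.

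The hard part is the first step: extracting the sharp $|t|^{\alpha}$ asymptotics of $1-\varphi(t)$, with the correct real and imaginary coefficients, from the one-sided tail hypotheses. I would carry this out regime by regime --- $\alpha<1$ (no first moment, centring at $0$), $1<\alpha<2$ (centring at the mean, with the contribution of the tails to the linear part needing to be tracked), and the borderline $\alpha=1$, where a logarithmic term appears, matching the $\log|\gamma t|$ of \eqref{eq: stable, characteristic}, and where the interplay between centring and norming is most delicate. Everything after the expansion --- raising to the $n$th power, identifying the constants, invoking the continuity theorem --- is routine.
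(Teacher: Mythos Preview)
The paper does not prove this theorem at all: it is stated as a classical result and attributed directly to \citet{gnedenko1968limit}, Theorem~5, \S35, with no argument given in the paper's proof section. So there is no ``paper's own proof'' to compare against.

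That said, your sketch is the standard route --- essentially the one Gnedenko and Kolmogorov themselves take --- and the outline is sound: Pitman-type tail expansion of $1-\varphi(t)$ via integration by parts against the regularly varying tails, identification of the $|t|^{\alpha}$ coefficient (real part from $K_1+K_2$, imaginary part from $K_1-K_2$), cancellation of the centring, the $n$th-power limit, and L\'evy's continuity theorem. Your handling of the parametrisation is also correct: the choice $\mu=\beta\tan\frac{\alpha\pi}{2}$ in the (M) parametrisation \eqref{eq: stable, characteristic} is exactly what removes the extra linear-in-$t$ piece coming from the $|t|^{\alpha-1}-1$ factor. The one place to be careful is the $\alpha=1$ case, which you flag but do not carry out; there the centring sequence is not $nm$ but involves a slowly varying (logarithmic) correction, and the statement as written in the paper in fact excludes $\alpha=1$ (only $\alpha<1$ and $\alpha>1$ are covered by the definition of $M(\alpha)$ and $m$), so you need not treat it here.
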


%
The following theorem, due to \citet{gnedenko1968limit} (Theorem 2, $\S$ 46), gives a uniform version of the previous result.
\begin{theo}
    \label{thm: stable, GTCL for densities}
    Under the assumptions and with the notations of Theorem~\ref{thm:
    stable, GTCL}, if $X_t$ has a density and if this density is of
    bounded variation, we have
    \begin{equation}
        \label{eq: stable, convergence en densité}
        \underset{x\in\mathbb{R}}{\sup}\ \left|f_n(x)-f(x,\psi)\right|
            \rightarrow 0,\quad \mbox{when}\ n\rightarrow+\infty,
    \end{equation}
    where $f_n$ is the density of $\frac{1}{an^{1/\alpha}}\sum_{t=1}^n
    \left(X_t-m\right)$ and $f(.,\psi)$ is the probability density of $Z$ with $\psi = \left(\alpha,\beta,\beta\tan \frac{\alpha\pi}{2},1\right)$ as defined in Theorem~\ref{thm: stable, GTCL}.
\end{theo}

The previous theorem has been extended by \citet{basu1980local} as follows.
\begin{theo}
    \label{thm, stable basu}
    Under the assumptions and with the notations of Theorem~\ref{thm:
    stable, GTCL for densities}, if the characteristic function $w$ of
    $X_1$ is such that
    \begin{equation*}
        \int_\mathbb{R} \left|w(u)\right|^r du < \infty,
    \end{equation*}
    for some integer $r\geq 1$, then for $0\leq \delta\leq\alpha$, we
    have
    \begin{equation}
        \label{eq: stable, final convergence}
        \underset{x\in\mathbb{R}}{\sup}\ (1+|x|)^\delta
            \left|f_n(x)-f(x,\psi)\right| \rightarrow 0,\quad
            \mbox{when}\ n\rightarrow+\infty.
    \end{equation}
\end{theo}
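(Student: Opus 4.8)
The plan is to run the classical Fourier-inversion argument for local limit theorems, but to keep track of the weight $(1+|x|)^{\delta}$ by transferring it, through integration by parts and a fractional-derivative identity, into a smoothness requirement on the characteristic functions near the origin.

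First I would record the reduction. Fix $n\ge r$. Since $|w|\le 1$ everywhere, $|\varphi_n(t)|=\bigl|w\bigl(t/(an^{1/\alpha})\bigr)\bigr|^{n}\le\bigl|w\bigl(t/(an^{1/\alpha})\bigr)\bigr|^{r}$, hence $\int_{\mathbb{R}}|\varphi_n(t)|\,dt=an^{1/\alpha}\int_{\mathbb{R}}|w(u)|^{r}\,du<\infty$, so $f_n$ exists, is bounded and continuous, and admits the inversion formula; the stable characteristic function $\varphi_{\psi}$ decays like $e^{-c|t|^{\alpha}}$, so it and its derivatives are integrable too. Writing $g_n=f_n-f(\cdot,\psi)$ and $\widehat g_n=\varphi_n-\varphi_{\psi}$, we have $g_n(x)=\frac{1}{2\pi}\int_{\mathbb{R}}e^{-itx}\widehat g_n(t)\,dt$, and the whole problem becomes a uniform estimate of this integral after multiplication by $(1+|x|)^{\delta}$.

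Next, the behaviour near $t=0$. The regular variation of the tails of $X_1$ with index $\alpha$ is exactly the input used to prove Theorem~\ref{thm: stable, GTCL}, and it yields an expansion of $w$ at the origin of the form $\log\bigl(e^{-ium}w(u)\bigr)=\ell(u)+|u|^{\alpha}\varepsilon(u)$ with $\varepsilon(u)\to0$, where $\ell$ is precisely the stable log-characteristic exponent normalized so that $n\,\ell\bigl(t/(an^{1/\alpha})\bigr)=\log\varphi_{\psi}(t)$; moreover one obtains matching control of the first $\lfloor\alpha\rfloor$ $t$-derivatives and a H\"older-$\{\alpha\}$ modulus for the top one, uniformly on compact $t$-sets (this is where the bounded-variation assumption on the density is convenient). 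Exponentiating and using $|\varphi_n|\le1$ then gives, on $\{|t|\le\epsilon\,an^{1/\alpha}\}$, bounds of the type $|\widehat g_n(t)|\le\rho_n(\epsilon)(1+|t|)^{\alpha}e^{-c|t|^{\alpha}}$ with $\rho_n(\epsilon)\to0$ for each fixed $\epsilon$, and likewise for the relevant derivatives of $\widehat g_n$. On the complementary region, since $X_1$ has a density the Riemann--Lebesgue lemma gives $\rho(\epsilon):=\sup_{|u|\ge\epsilon}|w(u)|<1$, whence $\int_{|t|>\epsilon an^{1/\alpha}}|\varphi_n(t)|\,dt=an^{1/\alpha}\int_{|u|>\epsilon}|w(u)|^{n}\,du\le an^{1/\alpha}\rho(\epsilon)^{\,n-r}\!\int_{\mathbb{R}}|w(u)|^{r}du\to0$ geometrically fast, while $\int_{|t|>\epsilon an^{1/\alpha}}|\varphi_{\psi}(t)|\,dt\to0$ by the exponential decay; analogous bounds hold for the $t$-derivatives.

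Finally I would put the weight back in. Write $\delta=k+\vartheta$ with $k=\lfloor\delta\rfloor\in\{0,1\}$ (recall $\delta\le\alpha<2$) and $\vartheta\in[0,1)$. Integrating by parts $k$ times turns $|x|^{k}g_n(x)$ into $\frac{1}{2\pi}\int e^{-itx}\widehat g_n^{(k)}(t)\,dt$, the boundary terms vanishing by the decay above, and the residual factor $|x|^{\vartheta}$ is absorbed through a Marchaud/Weyl fractional-derivative representation, yielding a bound of the form $(1+|x|)^{\delta}|g_n(x)|\le C\bigl(\|\widehat g_n\|_{L^{1}}+\|\widehat g_n^{(k)}\|_{L^{1}}+[\widehat g_n^{(k)}]_{\vartheta}\bigr)$ with $[\cdot]_{\vartheta}$ an $L^{1}$-type H\"older seminorm. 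Splitting each of these norms at $|t|=\epsilon an^{1/\alpha}$ and combining the origin estimates with the tail estimates shows every term tends to $0$; letting $\epsilon\to0$ slowly closes the argument. The main obstacle — and the structural reason the result is restricted to $\delta\le\alpha$ — lies in this origin step: one must establish that $\varphi_n$ and $\varphi_{\psi}$ have $L^{1}$ derivatives up to order $k$ together with an $L^{1}$ H\"older-$\vartheta$ control of the $k$-th derivative near $0$, \emph{uniform in $n$}, which is exactly the amount of regularity the stable exponent $-|\gamma t|^{\alpha}+\cdots$ carries; beyond order $\alpha$ this breaks down, and so does the weighted convergence.
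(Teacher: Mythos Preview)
The paper does not actually prove this theorem: it is quoted verbatim as a result of \citet{basu1980local}, with no argument given. So there is no ``paper's own proof'' to compare against; your sketch is a genuine addition, not a reproduction.

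That said, your outline follows the standard route for weighted local limit theorems in the domain of attraction of a stable law, and is essentially the one Basu uses: Fourier inversion, a splitting of the $t$-integral at $|t|=\epsilon\,an^{1/\alpha}$, Riemann--Lebesgue plus the hypothesis $\int|w|^r<\infty$ to kill the large-$|t|$ piece geometrically, and the regular-variation expansion of $\log w$ near the origin to show $\widehat g_n\to 0$ there with enough residual decay. Transferring the weight $(1+|x|)^{\delta}$ onto the Fourier side via $k=\lfloor\delta\rfloor$ integrations by parts together with a fractional (Marchaud/Weyl) derivative for the remaining exponent is exactly the mechanism that explains the ceiling $\delta\le\alpha$: the stable exponent $-| \gamma t|^{\alpha}+\cdots$ carries no more than $\alpha$ orders of smoothness at $t=0$, so the required $L^{1}$ H\"older control of $\widehat g_n^{(k)}$ is available only for $\vartheta\le\alpha-k$.

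Two small points to tighten. First, the displayed line ``$\int_{\mathbb R}|\varphi_n(t)|\,dt = an^{1/\alpha}\int_{\mathbb R}|w(u)|^{r}\,du$'' should be an inequality, not an equality; as written it is false, though the inequality is all you need to justify inversion for each fixed $n$. Second, the claim that the bounded-variation hypothesis on the density of $X_1$ is what delivers the derivative/H\"older control of $\widehat g_n$ near the origin is not quite right: bounded variation controls the decay of $w$ at infinity, whereas the behaviour of $w$ and its derivatives near $0$ comes from the tail assumptions \eqref{eq: stable, conditions domaine d'attraction 1}--\eqref{eq: stable, conditions domaine d'attraction 2} via Pitman-type Abelian/Tauberian arguments. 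Making that step precise---uniform-in-$n$ $L^{1}$ bounds on $\partial_t^{k}\widehat g_n$ and an $L^{1}$ H\"older-$\vartheta$ modulus on a shrinking neighbourhood of the origin---is where the real work lies, and your sketch correctly flags it as the crux.
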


\section{Conditionally heteroscedastic model with stable
innovations}\label{sec: stable garch}

In this section, we study the properties of the ML estimator, for the general class of conditionally heteroscedastic models defined in~\eqref{eq: intro, equation du
modèle} with a stable error process. The probability distribution of
$(\eta_t)_t$ is a stable law with parameter $\psi=
(\alpha,\beta,\mu,1)$. For identifiability reasons, the parameter
$\gamma$ has to be fixed to 1.

Since we work with a general model, we make some general assumptions which can be made more precise for explicit models. We will, in particular, consider the GARCH$(p,q)$ model. We suppose,
\renewcommand{\labelenumi}{\textbf{A\arabic{enumi}}}
\begin{enumerate}
    \setcounter{enumi}{-1}
    \item $(\epsilon_t)_t$ is a causal, strictly stationary and ergodic
    solution of~\eqref{eq: intro, equation du
    modèle}.\label{hyp: MLE, 0}
\end{enumerate}

Let $\epsilon_1,\cdots,\epsilon_n$ denote observations of the process $(\epsilon_t)_t$. The true parameter of the model is denoted $\tau_0 =
\left(\theta_0',\psi_0'\right)'$, where $\theta_0$ is in
$\mathbb{R}^m$ and parameterizes the known function $g$,
$\psi_0=(\alpha_0,\beta_0,\mu_0)'$ is the parameter of the stable
density, the fixed parameter $\gamma_0=1$ being omitted. We still
denote by $f(.,\psi)=f(.,\alpha,\beta,\mu)$ the density of a stable law
and we also keep this notation for the stable characteristic
function. The parameter $\tau_0$ belongs to a parameter space
$\Gamma = \Theta\times A\times B\times C$ such that
$A\subset\left]0,2\right[$, $B\subset \left]-1,1\right[$ and
$C\subset\mathbb{R}$.

We define the criterion, for $\tau=(\theta',\psi')' \in\Gamma$:
\begin{equation*}
    \tilde{I}_n(\tau) = \frac1n \sum_{t=1}^n \tilde{l}_t(\tau)
        \quad \text{with} \quad \tilde{l}_t(\tau)
    =\frac{1}{2}\log\tilde{\sigma}_t^2(\theta) -
    \log f\left(\frac{\epsilon_t}{\tilde{\sigma}_t(\theta)},\psi\right),
\end{equation*}
where the $\tilde{\sigma}_t$ are recursively defined using some
initial values and 
\begin{equation*}
	\tilde{\sigma}_t^2(\theta) =
g\left(\epsilon_{t-1},\ldots,\epsilon_{1},\tilde{\epsilon}_0,
\tilde{\epsilon}_{-1},\ldots;\theta\right).
\end{equation*}
We also define
$\sigma_t^2(\theta) = g\left(\epsilon_{t-1},
\epsilon_{t-2},\ldots;\theta\right)$. Let $\tau_n$ be the MLE of
model~\eqref{eq: intro, equation du modèle} defined by:
\begin{equation}
    \label{eq: MLE, définition du MLE}
    \tau_n = \underset{\tau\in\Gamma}{\text{argmin}}\  \tilde{I}_n(\tau).
\end{equation}

\begin{sloppypar}
We define $\phi_{t,i}(\theta) =\frac{1}{\sigma_t^2(\theta)}
\frac{\partial \sigma_t^2}{\partial\theta_i}(\theta)$,
$\phi_{t,i,j}(\theta) =\frac{1}{\sigma_t^2(\theta)}\frac{\partial^2
\sigma_t^2}{\partial\theta_i\partial\theta_j}(\theta)$ and
$\phi_{t,i,j,k}(\theta) = \frac{1}{\sigma_t^2(\theta)}
\frac{\partial^3\sigma_t^2}{\partial\theta_i
\partial \theta_j \partial\theta_k}(\theta)$ and we state some
assumptions on the function $g$ and the parameter space $\Gamma$.
\end{sloppypar}

\renewcommand{\labelenumi}{\textbf{A\arabic{enumi}}}
\begin{enumerate}
    \setcounter{enumi}{0}
    \item There exists $\underline{\omega}>0$ such that, almost surely, for any
        $\theta\in\Theta$, $\sigma_t(\theta) >
        \underline{\omega}$. \label{hyp: MLE, 1 minoration}
    \item For $t>1$, $\underset{\theta\in\Theta}{\sup}\
        \left|\sigma_t^2(\theta) -
        \tilde{\sigma}_t^2(\theta)\right| < K \rho^t$, where $K$
        is a constant and $0<\rho<1$. \label{hyp: MLE, 2
        initial}
    \item $\forall t,\ \sigma_t(\theta) = \sigma_t(\theta_0)$
        implies $\theta=\theta_0$, a.s.\label{hyp: MLE, 3
        identifiabilité}
    \item The parameter space $\Gamma$ is a compact set and $\tau_0\in\Gamma$.
        \label{hyp: MLE, 4 compact}
    \item There exists $s>0$ such that $E |\epsilon_t|^s
        < +\infty$.\label{hyp: MLE, 5 moment}
    \item For any compact subset $\Theta^*$ in the interior of
        $\Theta$ and for $(i,j,k)\in \left\{1,\ldots,m\right\}$,
        we have
        \begin{align*}
            &E\ \underset{\theta\in\Theta^*}{\sup}\ \left|\phi_{t,i}(\theta)\right| <+\infty,\
            E\ \underset{\theta\in\Theta^*}{\sup}\ \left|\phi_{t,i,j}(\theta)\right| < +\infty,\
            E\ \underset{\theta\in\Theta^*}{\sup}\
                \left|\phi_{t,i}(\theta) \phi_{t,j}(\theta)\right|<+\infty,\\
            &E\ \underset{\theta\in\Theta^*}{\sup}\ \left|\phi_{t,i}(\theta)
                \phi_{t,j,k}(\theta)\right| < +\infty,\
            E\ \underset{\theta\in\Theta^*}{\sup}\ \left|\phi_{t,i,j,k}(\theta)\right|
                < +\infty,\\
            &E\ \underset{\theta\in\Theta^*}{\sup}\ \left|
            \phi_{t,i}(\theta)\phi_{t,j}(\theta)\phi_{t,k}(\theta)\right|< +\infty.
        \end{align*}\label{hyp: MLE 8}
    \item For $t>1$, $\underset{\theta\in\Theta}{\sup}\left\|\frac{\partial \tilde{\sigma}_t^2}
        {\partial\theta}(\theta)- \frac{\partial{\sigma}_t^2}
        {\partial\theta} (\theta)\right\| < K\rho^t$, and
        $\underset{\theta\in\Theta}{\sup}\left\|\frac{\partial^2\tilde{\sigma}_t^2}
        {\partial\theta\partial\theta'}(\theta)-
        \frac{\partial^2{\sigma}_t^2}{\partial\theta\partial
        \theta'}(\theta)\right\| < K\rho^t$.\label{hyp: MLE 9}
    \item The components of $\frac{\partial\sigma_t^2}
        {\partial\theta}(\tau)$ are linearly independent.
        \label{hyp: MLE last}
\end{enumerate}

We prove that the estimator $\tau_n$ is CAN, the first result
establishes the consistency, then with additional assumptions, we
obtain the asymptotic normality of the estimator.

\begin{theo}
    \label{thm: MLE, consistency}
    Under Assumptions \textbf{A\ref{hyp: MLE, 0}-A\ref{hyp: MLE, 5 moment}},
    the estimator $\tau_n$ is consistent,
    \begin{equation}
        \label{eq: MLE, consistence}
        \tau_n \underset{n\rightarrow +\infty}{\longrightarrow} \tau_0 \quad \text{a.s.}
    \end{equation}
    If, in addition \textbf{A\ref{hyp: MLE 8}-A\ref{hyp: MLE last}}
    hold,
    \begin{equation}
        \label{eq: MLE, normalité asymptotique}
        \sqrt{n}(\tau_n-\tau_0) \arrowloi \mathcal{N}\left(0,J^{-1}\right),
    \end{equation}
    with $J = E\left[\frac{\partial^2 l_t(\tau_0)}
    {\partial\tau\partial\tau'}\right] = \E\left[\frac{\partial
    l_t}{\partial\tau}(\tau_0)\frac{\partial l_t}
    {\partial\tau'}(\tau_0)\right]$, where ${l}_t(\tau)
    =\frac{1}{2}\log{\sigma}_t^2(\theta) - \log
    f\left(\frac{\epsilon_t}{{\sigma}_t(\theta)},\psi\right)$.
\end{theo}

\begin{rem}
	The numerous assumptions of this theorem are due to the fact that Model~\eqref{eq: intro, equation du modèle} is very general. For more specific formulations, some of theses assumptions vanish. For exemple
    in the case of the GARCH$(p,q)$ model of Equation~\eqref{eq: intro,
    equation du modèle GARCH(p,q)}, Assumptions \textbf{A\ref{hyp:
    MLE, 1 minoration}, A\ref{hyp: MLE, 2 initial}, A\ref{hyp: MLE, 3
    identifiabilité}, A\ref{hyp: MLE, 5 moment},
    A\ref{hyp: MLE 8}, A\ref{hyp: MLE 9}} and
    \textbf{A\ref{hyp: MLE last}} are obtained in \citet{FZ2004mle}.
\end{rem}

Concerning Assumption \textbf{A\ref{hyp: MLE, 0}}, in the case
of the GARCH$(p,q)$, we require the top Lyapunov exponent associated to
the model (see for instance \citet{berkes2003gps}) to be strictly
negative. In the case $p=q=1$, we draw the stationarity zones for
the parameters $a$ and $b$ for different values of $\alpha$. Here,
we use a symmetric stable distribution (i.e. $\beta=0$). In Figure
\ref{fig: MLE, stationarity}, we numerically obtained the strict
stationarity zones which for each $\alpha$, is the area under the
curve. If $\alpha=2$, this is the stationarity zone for a GARCH(1,1)
model with Gaussian innovation but in the case $\alpha<2$, the
strict stationarity zone becomes smaller as $\alpha$ decreases. This
can be explained by the fact that the smaller $\alpha$, the
thicker the tails. Then if the parameters $a$ and $b$ take too
large values, the persistence of $\sigma_t$ is too strong and
$\sigma_t$ explodes to infinity.

\begin{figure}[H]
    \centering
	\includegraphics[width=0.7\textwidth]{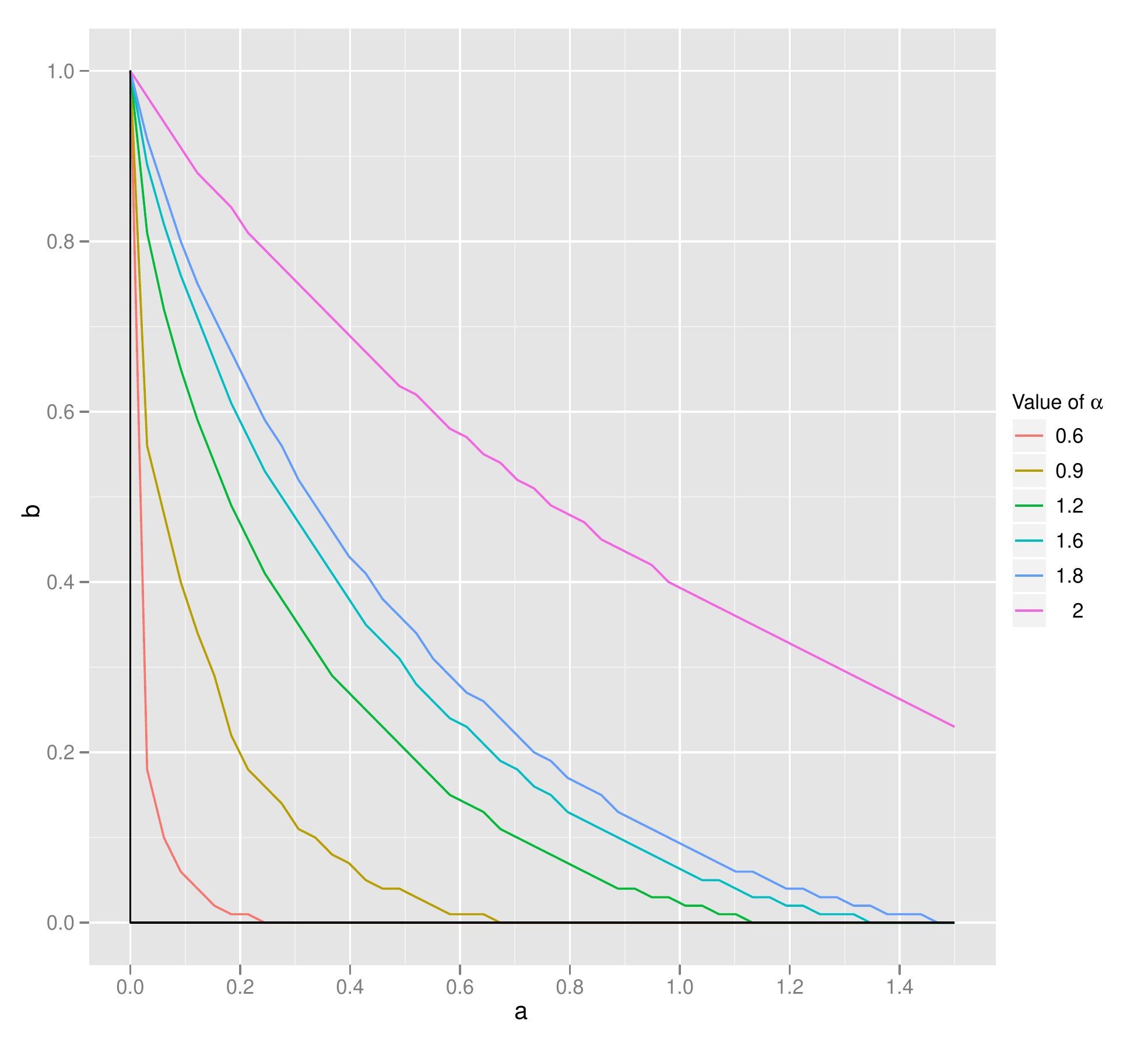}
    \caption{Strict stationarity zones for a GARCH(1,1) model with
    $\alpha$-stable innovation. The curves correspond, in decreasing order to $\alpha=2$,
    ..., $\alpha=0.6$.}
    \label{fig: MLE, stationarity}
\end{figure}



\section{When the innovation process converges in distribution to a stable distribution}
\label{sec: pseudo GARCH}

In this section, for clarity purpose, we will enunciate the results
for a GARCH$(p,q)$ model (Model~\eqref{eq: intro, equation du modèle
GARCH(p,q)}). The same results could be obtained for a more general model but at the cost of some technical assumptions on the function $g$. We write a different version of Model~\eqref{eq: intro,
equation du modèle GARCH(p,q)} with an innovation process
$(\eta_{nt})$ which now depends on $n$. We have,
\begin{equation}
    \label{eq: pseudo GARCH, modèle}
    \left\{
        \begin{aligned}
            \epsilon_{nt} &= \sigma_{nt}\eta_{nt}\\
            \sigma_{nt}^2 &= \omega_0 + \sum_{i=1}^q a_{0i}\epsilon_{nt-i}^2 +
            \sum_{j=1}^p b_{0j}\sigma_{nt-j}^2,\quad \forall t\in\mathbb{Z},\
            \forall n\in\mathbb{N},
        \end{aligned}
    \right.
\end{equation}
where the process $(\eta_{nt})_t$ is iid with p.d.f. $f_n$ and
converges in distribution toward a stable variable with parameter
$\psi_0 = (\alpha_0,\beta_0,\mu_0)'$. This assumption will be made explicit below. As in Section~\ref{sec: stable
garch}, the parameter $\gamma_0$ is omitted and fixed to 1 for
identifiability reasons. The true parameter of the model is $\tau_0
= \left(\theta_0',\psi_0'\right)'$, where $\theta_0 =
\left(\omega_0,a_{01},\ldots,a_{0q},b_{01}, \ldots,b_{0p}\right)'$
belongs to a parameter space $\Theta\subset(0,+\infty) \times
\left[0,+\infty\right)^{p+q}$. The parameter $\tau_0$ belongs to
$\Gamma = \Theta\times A\times B\times C$, with $A, B, C$ as in
Section~\ref{sec: stable garch}. Let the polynomials
$\mathcal{A}_\theta(z) = \sum_{i=1}^q a_i z^i$ and
$\mathcal{B}_\theta(z) = 1-\sum_{j=1}^p b_j z^j$ where
$\theta=(\omega,a_1,\ldots,a_q,b_1,\ldots,b_p)'$. For $\theta$ such
that $\sum_{j=1}^p b_j <1$ and $b_j\geq 0$ for
$j\in\left\{1,\ldots,p\right\}$, define the function
$\sigma_{nt}^2(\theta) = \frac{\omega}{\mathcal{B}_\theta(1)}
+\mathcal{B}_\theta^{-1}(L)\mathcal{A}_\theta(L)\epsilon_{nt}^2$,
where $L$ denotes the lag operator.

We suppose that the process $(\eta_{nt})_t$ is iid for every
$n\in\mathbb{N}$, but we need a stronger assumption. Define
for $t\in\mathbb{Z}$, $\mathcal{F}_t=\sigma\left(
\cup_{n_\in\mathbb{N}} \mathcal{F}_{nt}\right)$, where
$\mathcal{F}_{nt} = \sigma\left\{\eta_{nu};u\leq t-1\right\}$ and
suppose that for any $t\in\mathbb{Z}$ and for any $n\in\mathbb{N}$,
$\eta_{nt}$ is independent of $\mathcal{F}_t$.

We define a pseudo maximum likelihood estimator. The density of
$(\eta_{nt})_t$ is not specified but we suppose the convergence of
this process to the stable distribution with p.d.f. $f(.,\psi_0)$,
where $\psi_0$ is an unknown parameter. We use this density to build
a pseudo MLE. We have for $\tau\in\Gamma$,
\begin{equation*}
    \tilde{I}_n(\tau) = \frac{1}{n} \sum_{t=1}^{n} \tilde{l}_{nt}(\tau)
        \quad \text{with} \quad \tilde{l}_{nt}(\tau)
    =\log\tilde{\sigma}_{nt}(\theta) -
    \log f\left(\frac{\epsilon_{nt}} {\tilde{\sigma}_{nt}(\theta)},\psi
    \right),
\end{equation*}
where the $(\tilde{\sigma}_{nt}(\theta))_t$ are recursively defined
using some initial values. We define
\begin{equation}
    \tau_n = \underset{\tau\in\Gamma}{\text{argmin}}\  \tilde{I}_n(\tau).
\end{equation}
We have kept the same notations as in the previous section because
all the involved quantities are defined in the same way and play the
same role. The objects of this section simply display an additional
$n$ subscript.

We define $\gamma_n$ as the top Lyapunov exponent associated to
Model~\eqref{eq: pseudo GARCH, modèle} and $\gamma$ as the top
Lyapunov exponent associated to the model
\begin{equation}
    \label{eq: pseudo GARCH, modèle sans n}
    \left\{
        \begin{aligned}
            \epsilon_{t} &= \sigma_{t}\eta_{t}\\
            \sigma_{t}^2 &= \omega_0 + \sum_{i=1}^q a_{0i}\epsilon_{t-i}^2 +
            \sum_{j=1}^p b_{0j}\sigma_{t-j}^2,\quad \forall t\in\mathbb{Z},
        \end{aligned}
    \right.
\end{equation}
with $\eta_t\sim S(\psi_0)$. The top Lyapunov exponent will be more
precisely defined in Section~\ref{sec: Proof}.

\renewcommand{\labelenumi}{\textbf{B\arabic{enumi}}}
\begin{enumerate}
    \item \label{hyp: pseudo GARCH Compact}
        $\tau_0\in\Gamma$ and $\Gamma$ is a compact.
    \item \label{hyp: pseudo GARCH, stationnarité}
        $\gamma<0$ and $\forall \theta\in\Theta,\ \sum_{j=1}^p
        b_j < 1$.
    \item \label{hyp: pseudo GARCH, convergence unif eta n}
        There exists $\delta>1$ such that for any $n\in\mathbb{N}$,
        $\E|\eta_{nt}|^\delta < +\infty$ and
        $\underset{x\in\mathbb{R}}{\sup}\ (1+|x|)^\delta
        |f_n(x)-f(x,\psi_0)| \rightarrow 0$.
%
    \item \label{hyp: pseudo GARCH, polynome A et B}
        If $p>0$, $\mathcal{A}_{\theta_0}(z)$ and
        $\mathcal{B}_{\theta_0}(z)$ have no common roots,
        $\mathcal{A}_{\theta_0}(1)\neq 0$ and $a_{0q}+b_{0p}\neq
        0$.
    \item \label{hyp: pseudo GARCH, mélange}
        We have $\underset{n\in\mathbb{N}}{\sup}\
        \alpha_{\epsilon_n}(h) \leq K\rho^h$, where
        $\alpha_{\epsilon_{n}}(h)$ for $h\in\mathbb{N}$ is the
        strong mixing coefficient of the process
        $(\epsilon_{nt})_t$.
    \item\label{hyp: pseudo GARCH, interior}
        $\tau_0\in\stackrel{\circ}{\Gamma}$, where
        $\stackrel{\circ}{\Gamma}$ denotes the interior of
        $\Gamma$.
\end{enumerate}

\begin{theo}
    \label{theo: pseudo GARCH CAN}
    Under Assumptions \textbf{B\ref{hyp: pseudo GARCH
    Compact}-B\ref{hyp: pseudo GARCH, mélange}}, the estimator $\tau_n$
    is
    consistent,
    \begin{equation}
        \label{eq: pseudo GARCH consistence}
        \tau_n\underset{n\rightarrow+\infty}{\longrightarrow} \tau_0,\ a.s.
    \end{equation}
    If, in addition \textbf{B\ref{hyp: pseudo GARCH, interior}} holds,
    \begin{equation}
        \label{eq: pseudo GARCH AN}
        \sqrt{n}(\tau_n-\tau_0)\arrowloi \mathcal{N}\left(0,J^{-1}\right),
    \end{equation}
    with $J = E\left[\frac{\partial^2 l_t(\tau_0)}
    {\partial\tau\partial\tau'}\right] = \E\left[\frac{\partial
    l_t}{\partial\tau}(\tau_0)\frac{\partial l_t}
    {\partial\tau'}(\tau_0)\right]$, where ${l}_t(\tau)$ is defined in the proofs.
    %
\end{theo}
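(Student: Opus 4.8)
The plan is to run the classical two-step programme for (pseudo-)MLE of GARCH-type models --- uniform convergence of the criterion plus identifiability for consistency, then a Taylor expansion of the score together with a central limit theorem and a law of large numbers for the Hessian for asymptotic normality --- but with two structural modifications. First, $\bigl(l_{nt}(\tau)\bigr)_t$ is a triangular array (it depends on $n$ through $\eta_{nt}$), so the ergodic theorem must be replaced by limit theorems for strongly mixing triangular arrays; this is exactly what \textbf{B5} supplies. Second, the limiting objects are those of the GARCH$(p,q)$ model driven by the genuine stable law $\eta_t\sim S(\psi_0)$, not of the $n$-indexed model, so at every step one must also show that expectations taken under $f_n$ converge to their counterparts under $f(\cdot,\psi_0)$; this is where \textbf{B3} enters. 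A preliminary reduction, made once and for all: because $\sum_j b_j<1$ (\textbf{B2}), the difference between the initialised $\tilde\sigma_{nt}(\theta)$ and the stationary $\sigma_{nt}(\theta)$ (and the analogous differences for the first two $\theta$-derivatives) decays geometrically, uniformly in $\theta\in\Theta$ and in $n$, so $\sup_{\tau\in\Gamma}|\tilde I_n(\tau)-I_n(\tau)|\to 0$ a.s. and likewise for the derivatives; henceforth one works with the non-initialised criterion $I_n$. One also needs $\gamma_n<0$ for the $n$-th model to admit a stationary solution, which follows from $\gamma_n\to\gamma<0$, a continuity property of the top Lyapunov exponent established in the proof section.

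For consistency I would prove, on the compact $\Gamma$ (\textbf{B1}): (i) $\sup_{\tau\in\Gamma}|I_n(\tau)-\E I_n(\tau)|\to 0$ a.s., a uniform law of large numbers for the strongly mixing array $\bigl(l_{nt}(\tau)\bigr)$, combining \textbf{B5} (geometric mixing, uniform in $n$) with a chaining/equicontinuity argument over $\Gamma$ and Borel--Cantelli; the required domination $\sup_n\E\sup_\tau|l_{nt}(\tau)|<\infty$ comes from the logarithmic tail of $\log f$ (via \eqref{eq:tail behavior 1}), the bound $\sup_n\E|\eta_{nt}|^{\delta'}<\infty$ for some $\delta'>1$ (a consequence of \textbf{B3}), and the standard GARCH moment bounds valid when $\sum_j b_j<1$. (ii) $\E I_n(\tau)\to I(\tau):=\E l_t(\tau)$ uniformly on $\Gamma$, where $l_t$ is the criterion of the stable model: here $\eta_{nt}\Rightarrow\eta_t$ (hence $\sigma_{nt}(\theta)\Rightarrow\sigma_t(\theta)$ by continuity of the GARCH map under $\gamma<0$) together with \textbf{B3} --- the weight $(1+|x|)^\delta$, $\delta>1$, dominating the logarithmic growth of $\log f(\cdot,\psi)$ --- lets one pass to the limit in $\int\log f(x,\psi)\,f_n(x)\,dx$, uniformly in $\psi$ by equicontinuity. (iii) $I$ has a strict global minimum at $\tau_0$: a Kullback--Leibler/Jensen argument gives $I(\tau)-I(\tau_0)\ge 0$, with equality forcing $\sigma_t(\theta)=\sigma_t(\theta_0)$ a.s.\ and $\psi=\psi_0$; the first identity yields $\theta=\theta_0$ under the GARCH identifiability conditions \textbf{B4}, the second yields $\psi=\psi_0$ because the $(\alpha,\beta,\mu)$-parametrisation of a stable law is injective. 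Items (i)--(iii) give $\tau_n\to\tau_0$ a.s.

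For asymptotic normality, since $\tau_0\in\stackrel{\circ}{\Gamma}$ (\textbf{B6}) and $\tau_n\to\tau_0$, eventually $\partial_\tau\tilde I_n(\tau_n)=0$, so $\sqrt n(\tau_n-\tau_0)=-\bigl[\partial^2_{\tau\tau'}\tilde I_n(\tau_n^\ast)\bigr]^{-1}\sqrt n\,\partial_\tau\tilde I_n(\tau_0)$ with $\tau_n^\ast$ between $\tau_n$ and $\tau_0$ (differentiability of $\tau\mapsto l_{nt}(\tau)$ uses smoothness of $f$ in $\psi$ and of $\sigma_{nt}$ in $\theta$). For the Hessian, the same machinery as in consistency --- reduction to the non-initialised version, uniform LLN for the array $\partial^2_{\tau\tau'}l_{nt}(\tau)$ near $\tau_0$ (\textbf{B5} plus dominations obtained from the tail expansions \eqref{eq:tail behavior 1}--\eqref{eq:tail behavior 5} of the stable density and its derivatives and the GARCH moment bounds), and $\E\partial^2_{\tau\tau'}l_{nt}(\tau_0)\to\E\partial^2_{\tau\tau'}l_t(\tau_0)=J$ via \textbf{B3} --- yields $\partial^2_{\tau\tau'}\tilde I_n(\tau_n^\ast)\to J$ in probability; $J$ is invertible because its $\theta$-block is non-degenerate under \textbf{B4} and its $\psi$-block is the (non-degenerate) stable Fisher information, and the two expressions for $J$ agree by the information-matrix identity, which holds because the \emph{limiting} model is correctly specified. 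For the score, reduce to $\partial_\tau I_n(\tau_0)=\frac1n\sum_t\partial_\tau l_{nt}(\tau_0)$ and decompose $\partial_\tau l_{nt}(\tau_0)=M_{nt}+\Phi_{nt}\,b_n$, where $M_{nt}=\partial_\tau l_{nt}(\tau_0)-\E\bigl[\partial_\tau l_{nt}(\tau_0)\mid\mathcal F_{nt}\bigr]$ is a martingale-difference array, $\Phi_{nt}$ collects the bounded-moment factors $\phi_{nt,i}$ (for the $\theta$-block) and the constant $1$ (for the $\psi$-block), and $b_n$ is a deterministic vector whose entries are integrals $\int\xi(x)\bigl(f_n(x)-f(x,\psi_0)\bigr)\,dx$ with $\xi\in\{1+xf'/f,\ \partial_\psi\log f\}$ --- functions that, by \eqref{eq:tail behavior 1}--\eqref{eq:tail behavior 5}, grow at most like $\log|x|$, so $b_n=O\bigl(\sup_x(1+|x|)^\delta|f_n(x)-f(x,\psi_0)|\bigr)$. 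A Lindeberg CLT for strongly mixing triangular arrays applied to $\frac1{\sqrt n}\sum_t M_{nt}$ (Lindeberg condition from the uniform dominations, asymptotic variance $\to J$ by the information identity in the limit model) gives the $\mathcal N(0,J)$ limit; combined with the Hessian convergence and Slutsky, this is \eqref{eq: pseudo GARCH AN}.

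The main obstacle is precisely the last step: the bias term $\sqrt n\,\overline{\Phi}_n\,b_n$, with $\overline{\Phi}_n=\frac1n\sum_t\Phi_{nt}\to\E\Phi_t$, which is \emph{not} zero (e.g.\ its $\omega$-component converges to $\mathcal B_{\theta_0}(1)^{-1}\E[1/\sigma_t^2]>0$ and its $\psi$-components to $1$). Hence this term is negligible only if $n^{1/2}b_n\to 0$: \textbf{B3} guarantees $b_n\to 0$ and the weight $\delta>1$ its finiteness, while the required rate, in the framework motivating the paper where $\eta_{nt}$ is itself a normalised partial sum and $f_n\to f(\cdot,\psi_0)$ via the generalised local limit theorem (Theorem~\ref{thm, stable basu}), comes from the rate refinements of that theorem; verifying this, or the analogous condition in the abstract formulation, is the quantitative crux of the proof. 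The other technically delicate point is establishing, from the explicit stable-density tail expansions, the uniform-in-$n$ dominations $\sup_n\E\sup_{\tau\in V}\|\partial^k_\tau l_{nt}(\tau)\|<\infty$ on a neighbourhood $V$ of $\tau_0$ for $k=0,1,2$, needed to run the triangular-array limit theorems: this is routine but lengthy, since one must track the $\theta$- and $\psi$-directions separately (the $\alpha$-derivative carrying an extra $\log|x|$, the $\mu$- and $x$-derivatives an extra $|x|^{-1}$) and combine them with the geometric decay of the $\phi_{t,\cdot}$ valid for GARCH when $\sum_j b_j<1$.
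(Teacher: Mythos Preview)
Your overall programme --- negligibility of initial values, LLN for the criterion plus identifiability for consistency, Taylor expansion with a score CLT and a Hessian LLN for normality --- is exactly the paper's. The implementation differs in one key device: in place of your chaining/equicontinuity and ``continuity of the GARCH map'' arguments, the paper's workhorse is a \emph{finite-order truncation} $\sigma_{nt}^{2(m)}(\theta)$, $l_{nt}^{(m)}(\tau)$ depending only on $\{\eta_{nt-1},\ldots,\eta_{nt-2m-q}\}$, together with the uniform remainder bound $\sup_n\sup_\theta\E\bigl|\sigma_{nt}^{2s}(\theta)-\sigma_{nt}^{2(m)s}(\theta)\bigr|<K\rho^m$. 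This serves two purposes. First, the passage from $\E[\,\cdot\,]$ under $f_n$ to $\E[\,\cdot\,]$ under $f(\cdot,\psi_0)$ becomes a dominated-convergence computation on a fixed finite-dimensional integral (for $l_{nt}^{(m)}$), followed by an interchange of the $m$- and $n$-limits justified by the uniform-in-$n$ geometric remainder. Second, for the LLN the paper does \emph{not} prove a ULLN on $\Gamma$; it uses the Wald device of applying an LLN to $\inf_{\tau^*\in V_k(\tau)} l_{nt}(\tau^*)$ for each fixed ball, and that LLN is obtained from the covariance bound $\sup_n|\Cov(X_{nt},X_{nt-h})|<K\rho^h$ (split $X_{nt}=X_{nt}^{(\lfloor h/2\rfloor)}+R_{nt}^{(\lfloor h/2\rfloor)}$, apply \textbf{B5} to the truncated part and the geometric remainder to the rest), then Chebyshev along the subsequence $n^2$ with interpolation by positivity. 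Your approach would work too, but the truncation is what makes the paper's argument concrete.

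On the score CLT your analysis is actually more scrupulous than the paper's. The paper applies Lindeberg to the \emph{centered} array $z_{nt}=n^{-1/2}(Y_{nt}-\E Y_{nt})$ and then asserts the conclusion for the uncentered score $n^{-1/2}\sum_t Y_{nt}$; it establishes $\E Y_{n1}\to 0$ (via the truncation device again) but never $\sqrt n\,\E Y_{n1}\to 0$. So the bias term $\sqrt n\,\overline{\Phi}_n b_n$ you isolate, and your remark that \textbf{B3} as stated yields only $b_n\to 0$ without a rate, is not a gap in your proposal relative to the paper --- it is a step the paper's own proof leaves unjustified.
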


\begin{rem}
    The variance-covariance matrix in Theorem~\ref{theo: pseudo GARCH
    CAN} is the same as in Theorem~\ref{thm: MLE, consistency}. There is
    asymptotically no cost for not specifying the true distribution of
    the innovation and instead assuming that the process converges in distribution
    to a stable law.
\end{rem}

\begin{rem}
    The required assumptions for this result are very mild,
    \textbf{B\ref{hyp: pseudo GARCH Compact}, B\ref{hyp: pseudo GARCH,
    stationnarité}, B\ref{hyp: pseudo GARCH, polynome A et B}} and
    \textbf{B\ref{hyp: pseudo GARCH, interior}} are also needed for the
    classical Gaussian QML. Assumption \textbf{B\ref{hyp: pseudo GARCH, convergence unif eta n}} is specific to the problem and is verified in the case described hereafter where the innovation can be written as the sum of an iid process,
    %
    \begin{equation*}
        \eta_{nt} = \frac{1}{k_n^{1/\alpha}}\sum_{i=1}^{k_n}\nu_{it},
    \end{equation*}
    where for any $t\in\mathbb{Z}$, $(\nu_{it})_i$ is iid and where
    $(k_n)_n$ is an increasing sequence of integers and with
    $\alpha\in(1,2)$ such that there exist $\ K_1$ and $K_2$ such that
    \begin{align*}
        \PP\left[\nu_{1t}>x\right]&\sim K_1 x^{-\alpha},\
        \mbox{when}\ x\rightarrow +\infty,\\
        \PP\left[\nu_{1t}<x\right]&\sim K_2 x^{-\alpha},\
        \mbox{when}\ x\rightarrow -\infty,
    \end{align*}
    then if the density of $\nu_{it}$ satisfies the assumptions of
    Theorems~\ref{thm: stable, GTCL for densities} and~\ref{thm, stable
    basu}, the Assumption \textbf{B\ref{hyp: pseudo GARCH, convergence
    unif eta n}} is verified by Theorem~\ref{thm, stable basu}.
\end{rem}

\begin{rem}
    In the Gaussian QML case, the asymptotic inverse variance-covariance
    matrix $J$ depends on the unobserved distribution of the process
    $(\eta_t)_t$. Here the matrix $J$ depends on the limit in distribution
    of the innovation process $(\eta_{nt})_t$. We can define an estimator for the matrix $J$, based on the process $(\epsilon_{nt})_t$ and prove that this estimator is consistent.
\end{rem}

\begin{rem}
	About Assumption \textbf{B\ref{hyp: pseudo GARCH, mélange}}, for each value of $n$, the fact that there exist constants $K$ and $\rho$ such that $\alpha_{\epsilon_n}(h) \leq K\rho^h$ has been proved by \citet{boussama71998}. We only assume that this is also true for $\underset{n\in\mathbb{N}}{\sup}\ \alpha_{\epsilon_n}(h)$.
\end{rem}

\begin{theo}
    \label{thm: consistence de J_n} Define $J_n = \frac1n \sum_{t=1}^n
    \frac{\partial^2 \tilde{l}_{nt}}{\partial\tau\partial\tau'}(\tau_n)$. With
    the assumptions of Theorem~\ref{theo: pseudo GARCH CAN}, we have
    \begin{equation}
        \label{eq: consistence J_n}
        J_n \underset{n\rightarrow+\infty}{\longrightarrow} J,\quad \mbox{a.s.}
    \end{equation}
\end{theo}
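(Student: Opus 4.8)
The plan is to decompose the difference $J_n - J$ into pieces that can each be controlled using tools already established in the paper: the consistency $\tau_n \to \tau_0$ a.s. from Theorem~\ref{theo: pseudo GARCH CAN}, a uniform law of large numbers on a neighborhood of $\tau_0$, and the exponentially fast decay of the initialization error from the analogue of Assumptions \textbf{A\ref{hyp: MLE, 2 initial}} and \textbf{A\ref{hyp: MLE 9}} available in the GARCH$(p,q)$ setting. First I would write
\begin{equation*}
    J_n - J = \left(\frac1n\sum_{t=1}^n \frac{\partial^2 \tilde l_{nt}}{\partial\tau\partial\tau'}(\tau_n) - \frac1n\sum_{t=1}^n \frac{\partial^2 l_{nt}}{\partial\tau\partial\tau'}(\tau_n)\right) + \left(\frac1n\sum_{t=1}^n \frac{\partial^2 l_{nt}}{\partial\tau\partial\tau'}(\tau_n) - E\left[\frac{\partial^2 l_t}{\partial\tau\partial\tau'}(\tau_0)\right]\right),
\end{equation*}
where $l_{nt}$ is the non-truncated version of $\tilde l_{nt}$ (built from $\sigma_{nt}(\theta)$ instead of $\tilde\sigma_{nt}(\theta)$). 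The first bracket is the initialization error: using the exponential bounds on $\sup_\theta|\sigma_{nt}^2 - \tilde\sigma_{nt}^2|$, $\sup_\theta\|\partial\sigma_{nt}^2/\partial\theta - \partial\tilde\sigma_{nt}^2/\partial\theta\|$ and the second derivatives, together with the lower bound $\sigma_{nt}(\theta) > \underline\omega$ and the tail-behavior estimates \eqref{eq:tail behavior 1}--\eqref{eq:tail behavior 5} for $f$ and its derivatives, one shows this term is $O(n^{-1}\sum_t \rho^t) = O(n^{-1})$, uniformly in $\tau$, hence negligible a.s.

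For the second bracket I would split it further as
\begin{equation*}
    \left(\frac1n\sum_{t=1}^n \frac{\partial^2 l_{nt}}{\partial\tau\partial\tau'}(\tau_n) - \frac1n\sum_{t=1}^n \frac{\partial^2 l_{nt}}{\partial\tau\partial\tau'}(\tau_0)\right) + \left(\frac1n\sum_{t=1}^n \frac{\partial^2 l_{nt}}{\partial\tau\partial\tau'}(\tau_0) - E\left[\frac{\partial^2 l_t}{\partial\tau\partial\tau'}(\tau_0)\right]\right).
\end{equation*}
The second term here is a difference between an empirical average along the triangular array $(\epsilon_{nt})_t$ evaluated at the fixed point $\tau_0$ and its limiting expectation. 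Here I would invoke the mixing assumption \textbf{B\ref{hyp: pseudo GARCH, m�lange}} (uniform geometric strong mixing) together with the convergence in distribution $\eta_{nt}\Rightarrow S(\psi_0)$ from \textbf{B\ref{hyp: pseudo GARCH, convergence unif eta n}} — exactly the machinery already used to prove Theorem~\ref{theo: pseudo GARCH CAN} — to get an ergodic-type law of large numbers for the array: the empirical average converges a.s.\ to $E[\partial^2 l_t(\tau_0)/\partial\tau\partial\tau']$, the expectation being taken under the limiting stable model \eqref{eq: pseudo GARCH, mod�le sans n}. This requires an integrability bound $E\sup\|\partial^2 l_{nt}/\partial\tau\partial\tau'\|$ uniform in $n$, which follows from the moment condition $E|\eta_{nt}|^\delta<\infty$ ($\delta>1$), the bound $\sigma_{nt}>\underline\omega$, and the tail estimates on the derivatives of $\log f$; note that $\partial^2(\log f)/\partial\tau\partial\tau'$ is bounded for large argument because of \eqref{eq:tail behavior 1}--\eqref{eq:tail behavior 5}, so only the behavior near possible zeros of $f$ and the moments of $\epsilon_{nt}/\sigma_{nt}$ matter, and the latter are controlled uniformly in $n$.

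For the first term in the last display — the difference between the Hessian averages at $\tau_n$ and at $\tau_0$ — I would use a uniform law of large numbers: on a compact neighborhood $V$ of $\tau_0$ contained in $\stackrel{\circ}{\Gamma}$, the family $\tau\mapsto n^{-1}\sum_{t=1}^n \partial^2 l_{nt}(\tau)/\partial\tau\partial\tau'$ converges a.s.\ uniformly over $V$ to the continuous limit $\tau\mapsto E[\partial^2 l_t(\tau)/\partial\tau\partial\tau']$; this needs a third-derivative domination $E\sup_{\tau\in V}\|\partial^3 l_{nt}/\partial\tau\partial\tau'\partial\tau_k\|<\infty$ uniformly in $n$ (again from the moment and tail bounds, which is why the paper's hypotheses went up to third derivatives), which gives equicontinuity. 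Since $\tau_n\to\tau_0$ a.s.\ and the limit function is continuous at $\tau_0$, this difference vanishes a.s. Combining the three pieces yields $J_n\to J$ a.s. The main obstacle is the second term of the last display, namely establishing the array law of large numbers under only convergence in distribution of the innovations plus uniform mixing; I would handle it by reproducing, mutatis mutandis, the coupling/approximation argument from the proof of Theorem~\ref{theo: pseudo GARCH CAN} (approximate $\sigma_{nt}^2(\theta)$ by a finite-order truncation that depends on finitely many $\epsilon_{nt-i}$, pass to the limit in distribution on that truncation using \textbf{B\ref{hyp: pseudo GARCH, convergence unif eta n}}, then let the truncation order grow), rather than by any ergodic theorem applied directly to the non-stationary array.
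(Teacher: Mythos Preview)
Your proposal is correct and follows essentially the same route as the paper. The paper's proof is extremely terse: it Taylor-expands each $\partial^2 l_{nt}/\partial\tau_i\partial\tau_j$ at $\tau_n$ around $\tau_0$, invokes Lemma~\ref{lem: pseudo GARCH AN, d�riv�e 3} (third-derivative control on a neighborhood of $\tau_0$) together with the consistency $\tau_n\to\tau_0$ to kill the remainder, and then appeals to the array law of large numbers for the Hessian at $\tau_0$ (the second-derivative analogue of Lemma~\ref{ref: pseudo GARCH AN convergence d lnt en n}, proved exactly via the truncation-plus-mixing scheme you describe). Your uniform-LLN/equicontinuity phrasing for the step from $\tau_n$ to $\tau_0$ is equivalent to the paper's Taylor expansion, since both rest on the same third-derivative domination; and your explicit handling of the initialization error $\tilde l_{nt}$ versus $l_{nt}$ is what the paper leaves implicit via Lemma~\ref{lem: last}.
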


\section{Numerical experiments}

In this section, we describe a simulation experiment which aims at studying the
behavior of the pseudo MLE for finite samples, and for an innovation
process whose distribution is close to a stable distribution. We use the algorithm of \citet{chambers1976method} to simulate stable processes and
Proposition~\ref{prop: développement en série} to compute the stable
density.

We want to verify that even if the model is
misspecified, that is if we use a stable MLE when the true
distribution of the innovation process is not stable, the GARCH coefficients are still correctly estimated. We use a Student distribution with degree of
freedom $\alpha$ (which by Theorem~\ref{thm: stable, GTCL} is in the
domain of attraction of a stable law of parameter $\alpha$) to build an innovation process of the form
\begin{equation*}
    \eta_t^{(K)} = \frac{1}{K^{1/\alpha}} \sum_{k=1}^K \nu_{k,t},\quad
    \mbox{with}\quad (\nu_{k,t})_k\stackrel{iid}{\sim} t_\alpha.
\end{equation*}
Using the results of Section~\ref{sec: stable}, we obtain that, when $K$ tends to infinity, $\eta_t^{(K)}$ converges in distribution toward an alpha stable law. The problem is that, for identifiability reason, the parameter $\gamma$ of the stable distribution cannot be estimated and has to be fixed to $1$. When $K=+\infty$, the process $\left(\eta_t^{(+\infty)}\right)_t$ is alpha stable with parameter $\psi =\left(\alpha,\beta,\mu,\gamma\right)'$. The parameter $\psi$ depends on the degree of freedom of the Student process $(\nu_{k,t})_k$ and can be calculated. For a generic case, we have $\gamma \neq 1$.
If we estimate a GARCH model with innovation process $\left(\eta_t^{(+\infty)}\right)_t$ using a stable pseudo MLE method, we would obtain estimates of the parameters of the same model but written under a different identifiability assumption. For example, if we aim to estimate the model
\begin{equation*}
	\left\{
		\begin{aligned}
			\epsilon_t &= \sigma_t \eta_t^{(+\infty)}\\
			\sigma_t^2 &= \omega_0 + a_0\epsilon_{t-1}^2 + b_0\sigma_{t-1}^2,
		\end{aligned}\right.
\end{equation*}
the stable pseudo MLE defined in the previous sections will converge toward $\theta_0^* = \left( \frac{\omega_0}{\gamma^2}, \frac{a_0}{\gamma^2}, b_0\right)'$, see \citet{francq2011two} for more details on reparametrization of GARCH models. Note that the estimation of the ``GARCH'' parameter $b_0$ is not affected by the identification problem. In order to compare estimates of the same quantity, it is thus important that the model is similarly identified for each value of $K$. Thereafter, we use the following identifiability condition. 
\begin{itemize}
	\item If the innovation process $(\eta_t)_t$ is stable, then it is stable with parameter $\gamma_0=1$ (we recall that, if $X$ is stable with parameter $\gamma = \gamma_0>0$ then $\frac{X}{\gamma_0}$ is stable with parameter $\gamma=1$).
	\item It the innovation process $(\eta_t)_t$ is not stable, we require that, among the family of stable distributions, the closest distribution to the distribution of $(\eta_t)_t$ in the sense of the Kulback-Leibler distance is stable with parameter $\gamma_0=1$.
\end{itemize}
Thus, for each $K$, we estimate the quantity $j_K$, defined such that the innovation process defined by $\eta_{t,K} =  \frac{1}{j_K K^{1/\alpha}} \sum_{k=1}^K \nu_{k,t}$ satisfies the identifiability assumption. It is important to note that if we use  another normalizing constant than $j_K$, the results of the estimation by stable pseudo ML are as efficient as in the case where we use $j_K$, the model is simply written under a different identifiability condition.

We generated 1000 samples of size $n=1000$ for different values of $K$ ($K=500,\ K=1000,\ K=10000,\ K=100000$ and $K=\infty$) of the following model and
estimate its parameters by stable MLE (or pseudo MLE).
\begin{equation}
    \left\{
    \begin{aligned}
        \epsilon_t &= \sigma_t\eta_{t,K}\\
        \sigma_t^2 &= 0.01+0.02\epsilon_{t-1}^2+0.7\sigma_{t-1}^2\\
        \eta_{t,K} &= \frac{1}{j_K K^{1/\alpha}} \sum_{k=1}^K \nu_{k,t}.
    \end{aligned}\right.
    \label{eq: modele simu}
\end{equation}
We can summarize the simulation scheme with the following steps. For a parameter $\theta_0$, for $K>0$ and for a student distribution of degree $\alpha$,
\begin{itemize}
	\item Step 1: we simulate $1000$ samples of the variable $\left(\frac{1}{ K^{1/\alpha}} \sum_{k=1}^K \nu_{k,t}\right)_t$, then we fit a stable distribution on each sample. For each sample $s$, we denote by $\psi_s = \left(\alpha_s,\beta_s,\mu_s,\gamma_s\right)'$ the results of this estimation.
	\item Step 2: we compute $j_K = \frac1{1000} \sum_{s=1}^{1000} \gamma_s$.
	\item Step 3: we draw $1000$ samples of Model~\eqref{eq: modele simu}.
	\item Step 4: for each sample $s$, we estimate $\tau_n^{(s)} = \left(\theta_n^{(s)'},\psi_n{(s)'}\right)'$ by stable PMLE.
\end{itemize}

\vspace{0.5cm}

The results of these estimations are presented in Table~\ref{tab: result simul}. For each of the six parameters (three for
the GARCH dynamic and three for the stable distribution), we give
the quotient of the Root Mean Squared Errors (RMSE) of the corrected
stable pseudo MLE and of the RMSE of the MLE, corresponding to the case
$K=\infty$. This statistic is given by $Q_i^K =
\frac{RMSE_i^{K=+\infty}}{RMSE_i^{K=K}}$ where the RMSE for $K$ can
be obtained by, for $i\in\left\{1,\cdots,6\right\},\ RMSE_i^K =
\frac1{1000} \sum_{s=1}^{1000} \left(\theta_{n,i}^{(s)}-\theta_{0,i}\right)^2$. The
greater $Q_i$ is, the better the stable pseudo MLE is with respect to the
MLE. In this simulation framework, we do not compare different methods of estimation. We use the same method but applied to different data generating processes.
Table \ref{tab: result simul} shows that, concerning parameters $\omega$, $a$, $b$ and $\alpha$, the $(Q_i)_{i\in\{1,\cdots,4\}}$ increase with $K$. For large values of $K$, the RMSE of the misspecified model is quite close to the RMSE of the asymptotic case, i.e. the well specified case. Table \ref{tab: result simul} also indicates that the result of the simulation for parameters $\beta$ and $\delta$ are more surprising. Indeed, the RMSE of the estimation of the asymmetry parameter $\beta$ is smaller when $K$ is small. But a good estimation of the parameter $\beta$ is not of much information if the parameter $\alpha$ is not well estimated. Finally, for large values of $K$, we find that our estimator is not much affected by the specification error on the density used to compute the likelihood.

\begin{table}[H]
\begin{center}
    \begin{tabular}{|c|cccccc|cccccc|} \hline
    &\multicolumn{6}{c|}{$\alpha=1.6$}&\multicolumn{6}{c|}{$\alpha=1.4$}\\
    $K$&10&50&500&1000&10000&100000&10&50&500&1000&10000&100000\\
  \hline
$w$ & 0.34 & 0.59 & 0.79 & 0.66 & 0.89 & 0.86 & 0.56 & 0.68 & 0.80 & 0.83 & 0.78 & 0.91 \\ 
  $a$ & 0.12 & 0.24 & 0.50 & 0.54 & 0.82 & 0.83 & 0.35 & 0.61 & 0.85 & 0.74 & 0.81 & 0.97 \\ 
  $b$ & 0.42 & 0.61 & 0.76 & 0.76 & 0.94 & 0.99 & 0.76 & 0.92 & 0.97 & 0.93 & 0.94 & 0.97 \\ 
  $\alpha$ & 0.17 & 0.31 & 0.58 & 0.70 & 0.91 & 1.03 & 0.57 & 0.84 & 0.93 & 0.94 & 0.95 & 0.96 \\ 
  $\beta$ & 2.45 & 1.45 & 1.27 & 1.22 & 1.05 & 1.01 & 1.29 & 0.92 & 0.84 & 1.12 & 0.86 & 1.04 \\ 
  $\delta$ & 1.26 & 0.96 & 0.96 & 0.87 & 0.89 & 0.86 & 1.07 & 0.96 & 0.90 & 1.09 & 0.90 & 0.93 \\ 
   \hline
    \end{tabular}
\vspace{3mm} \caption{Ratio of RMSE for the parameters of the model for
several values of $K$ and $\alpha$.} \label{tab: result simul}
\end{center}
\end{table}

\section{Application to financial data}

In this section, we consider the daily returns of several indices
and currency rates, namely the EURUSD, JPYUSD, DJA, DJI, DJT, DJU,
CAC, FTSE, NIKKEI, DAX, S\&P50 and the SMI. A GARCH(1,1) model with
stable innovations is estimated on each of these series. The samples
extend from January 1, 2008 to December 31, 2010. The estimated
$\alpha$'s are lower for the period before 2008 so we only kept
three years of data. Table~\ref{tab: real data result} shows the
results of these estimations with the standard deviation in
parenthesis. We can see that even if the GARCH modeling explain a
fraction of the leptokurticity of the series, the residuals still
possess heavy tails since in most cases $\alpha$ is around 1.8 and
thus different from 2 (except for the NIKKEI). When $\alpha=2$, the
parameter $\beta$ cannot be identified.

\begin{table}[H]
    \begin{center}
    \resizebox{\textwidth}{!}{
    \begin{tabular}{|l|cc|cc|cc|cc|cc|cc|} \hline
        \multicolumn{1}{|c|}{Index}&\multicolumn{2}{|c|}{$\omega\times 10^5$}&\multicolumn{2}{c|}{$a\times 10^2$}&\multicolumn{2}{c|}{$b$}&\multicolumn{2}{c|}{$\alpha$}&\multicolumn{2}{c|}{$\beta$}&\multicolumn{2}{c|}{$\mu$}\\
        \hline
        EURUSD&~0.133&(~0.047)&~4.120&(~0.720)&~0.877&(~0.025)&~1.900&(~0.020)&-0.007&(~0.480)&-0.007&(~0.075)\\
        JPYUSD&~0.578&(~0.270)&~3.370&(~0.770)&~0.667&(~0.130)&~1.780&(~0.037)&-0.148&(~0.190)&-0.137&(~0.062)\\
        INRUSD&~0.042&(~0.016)&~2.300&(~0.630)&~0.912&(~0.024)&~1.820&(~0.077)&~0.242&(~0.230)&-0.016&(~0.067)\\
        DJA&~0.050&(~0.044)&~4.610&(~0.540)&~0.895&(~0.014)&~1.820&(~0.110)&-0.039&(~0.280)&~0.028&(~0.071)\\
        DJI&~0.046&(~0.038)&~4.670&(~0.400)&~0.893&(~0.011)&~1.840&(~0.083)&-0.501&(~0.270)&~0.114&(~0.071)\\
        DJT&~0.071&(~0.070)&~3.820&(~0.560)&~0.916&(~0.014)&~1.880&(~0.087)&-0.060&(~0.330)&~0.060&(~0.067)\\
        DJU&~0.095&(~0.045)&~4.690&(~0.700)&~0.887&(~0.018)&~1.910&(~0.068)&-0.296&(~0.360)&~0.040&(~0.064)\\
        CAC40&~0.221&(~0.092)&~2.970&(~0.550)&~0.914&(~0.016)&~1.880&(~0.072)&-0.220&(~0.210)&-0.010&(~0.058)\\
        FTSE&~0.156&(~0.067)&~3.660&(~0.610)&~0.898&(~0.019)&~1.820&(~0.050)&-0.222&(~0.230)&~0.059&(~0.060)\\
        NIKKEI&~0.376&(~0.140)&~5.700&(~0.490)&~0.863&(~0.021)&~2.000&(~0.090)&~NA&(~NA)&-0.021&(~0.053)\\
        DAX&~0.136&(~0.051)&~2.830&(~0.510)&~0.922&(~0.014)&~1.860&(~0.079)&-0.252&(~0.270)&~0.034&(~0.065)\\
        S\&P500&~0.909&(~0.250)&~3.870&(~0.780)&~0.766&(~0.055)&~1.770&(~0.057)&-0.162&(~0.180)&~0.103&(~0.062)\\
        SMI&~0.112&(~0.049)&~4.850&(~0.470)&~0.876&(~0.015)&~1.850&(~0.077)&-0.255&(~0.250)&~0.028&(~0.064)\\
        \hline
    \end{tabular}
    }
    \end{center}
    \caption{Estimation of a GARCH(1,1), standard errors in parenthesis.}
    \label{tab: real data result}
\end{table}

These estimations can be used to compute Value-at-Risk (VaR). If
$\epsilon_t$ is the return of the series, the VaR with coverage
probability $p$ at time $t$ is defined as the quantity $\VaR_t(p)$
satisfying
\begin{equation*}
    \PP_t[\epsilon_t \leq \VaR_t(p)] = p,
\end{equation*}
where $\PP_t$ is the probability measure conditionally to the time
$t-1$ information set. Using a conditionally heteroscedastic model
with stable innovation, if $\tau_n=\left(\theta_{n}',
\psi_{n}'\right)'$ is the MLE (or pseudo MLE if the innovation is
not stable but assumed to be close to a stable distribution), we
have
\begin{equation*}
    \VaR_t(p) = \tilde{\sigma}_{t-1}(\theta_n) F^{\leftarrow}(p,\psi_n),
\end{equation*}
where $F^{\leftarrow}(.,\psi)$ is the quantile function of a stable
distribution of parameter $\psi$ (with $\gamma=1$). We compare this
\emph{stable} VaR to the VaR computed using a GARCH(1,1) model,
estimated with the Gaussian QMLE. We compute a Gaussian QMLE on the
indices used in Table~\ref{tab: real data result} and obtain the
Gaussian counterparts of the parameters in this table.

Then we compute the Gaussian VaR and the stable VaR on an outsample
data set (from January 1, 2011 to January 31, 2012). We give the
results for the VaR of level $1\%$ and $5\%$ in Table~\ref{tab:
result VaR}.

\begin{table}[H]
\begin{center}
    \begin{tabular}{|l|cc|cc|} \hline
        \multicolumn{1}{|c|}{Index}&\multicolumn{2}{|c|}{Level=0.01}&\multicolumn{2}{|c|}{Level=0.05}\\
        \hline
        &Stable&Gaussian&Stable&Gaussian\\
        \hline
        EURUSD&0.0108&0.0180&0.0755&0.0791\\
        JPYUSD&0.0035&0.0035&0.0177&0.0213\\
        INRUSD&0.0106&0.0142&0.0426&0.0355\\
        DJA&0.0221&0.0221&0.0551&0.0515\\
        DJI&0.0147&0.0221&0.0551&0.0588\\
        DJT&0.0294&0.0331&0.0588&0.0478\\
        DJU&0.0110&0.0110&0.0551&0.0588\\
        CAC40&0.0108&0.0179&0.0645&0.0717\\
        FTSE&0.0037&0.0110&0.0625&0.0662\\
        NIKKEI&0.0114&0.0114&0.0342&0.0342\\
        DAX&0.0072&0.0143&0.0717&0.0789\\
        S\&P500&0.0074&0.0221&0.0699&0.0588\\
        SMI&0.0109&0.0181&0.0688&0.0688\\
        \hline
        \textbf{Mean}&\textbf{0.0118}&\textbf{0.0168}&\textbf{0.0563}&\textbf{0.0563}\\
        \hline
    \end{tabular}
\vspace{3mm} \caption{Frequency of hits for the Gaussian VaR and the
stable VaR} \label{tab: result VaR}
\end{center}
\end{table}

The two methods give very close results for $p=0.05$, but the
Gaussian method seems unable to explain the extremes of the
distribution and the stable VaR seems to give better results for
$p=0.01$. In this case and for almost every index, the Gaussian VaR is underestimated. There are too many hits in the sample. We can conclude that the residuals of the GARCH model estimated by Gaussian QMLE are too leptokurtic to be explained by a Gaussian distribution. To conclude, the stable distribution seems to do a better job to explain the tails of the studied financial series.

%

\newpage
\section{Proofs}
\label{sec: Proof}

Throughout the proofs and the paper, we denote by $K$ and $\rho$
generic constants whose values $K>0$ and $0<\rho<1$ can vary from
line to line.

\subsection{Proof of the consistency in Theorem~\ref{thm: MLE, consistency}}

Let $I_n$ (resp. $l_t$) be the equivalent of $\tilde{I}_n$ (resp. $\tilde{l}_t$)
when an infinite past is known,
\begin{equation*}
    {I}_n(\tau) = \frac1n \sum_{t=1}^n {l}_t(\tau)
        \quad \text{with} \quad {l}_t(\tau) =\frac{1}{2}\log{\sigma}_t^2(\theta)
        - \log f\left(\frac{\epsilon_t}{{\sigma}_t(\theta)},\psi\right).
\end{equation*}
We first prove that the initial values are asymptotically
negligible, that is
\begin{equation}
    \label{eq:négligeabilité des valeurs initiales pour la consistence}
    \underset{n\rightarrow +\infty}{\lim}\
    \underset{\tau\in\Gamma}{\sup}\
    \left|I_n(\tau)-\tilde{I}_n(\tau)\right| = 0 \quad \text{a.s.}
\end{equation}
We have,
\begin{equation*}
    \underset{\tau\in\Gamma}{\sup}\ \left|
    I_n(\tau)-\tilde{I}_n(\tau)\right| \leq \frac1n \sum_{t=1}^n
    \underset{\tau\in\Gamma}{\sup}\ \left|
    \frac{1}{2} \log\frac{\sigma_t^2(\theta)}{\tilde{\sigma}_t^2(\theta)} +
    \log\frac{f(\tilde{\eta}_t(\theta),\psi)}{f(\eta_t(\theta),\psi)}\right|,
\end{equation*}
where $\tilde{\eta}_t(\theta) =
\frac{\epsilon_t}{\tilde{\sigma}_t(\theta)}$ and ${\eta}_t(\theta) =
\frac{\epsilon_t}{{\sigma}_t(\theta)}$.

The function $f$ is infinitely differentiable with respect to $x$,
therefore, for $\tau=(\theta',\psi')'\in\Gamma$, we have
\begin{equation*}
    \left|\log f(\tilde{\eta}_t(\theta),\psi) - \log f(\eta_t(\theta),\psi)\right|<
    \underset{x\in\mathbb{R}}{\sup}\ \left|\frac{f'(x,\psi)}{f(x,\psi)}\right|
        \left|\tilde{\eta}_t(\theta) - \eta_t(\theta)\right|.
\end{equation*}
Next, using the asymptotic expansion~\eqref{eq:tail behavior
1}-\eqref{eq:tail behavior 2}, we obtain that $\frac{f'(x,\psi)}
{f(x,\psi)}$ tends to 0, when $|x|$ tends to infinity and thus that
$x\mapsto \frac{f'(x,\psi)}{f(x,\psi)}$ is bounded on $\mathbb{R}$. This can be obtained since for any $\psi\in A\times B\times C$, the support of the function $x\mapsto f(x,\psi)$ is $\mathbb{R}$. Under Assumption \textbf{A\ref{hyp: MLE, 4 compact}}, we obtain
$\underset{\tau\in\Gamma}{\sup}\ \underset{x\in\mathbb{R}}{\sup}\
\left|\frac{f'(x,\psi)} {f(x,\psi)}\right| < \infty$. Using Assumption \textbf{A\ref{hyp: MLE, 1 minoration}} and \textbf{A\ref{hyp: MLE, 2 initial}}, it can be seen that $\underset{\tau\in\Gamma}{\sup}\ \left| {\eta}_t(\theta) - \eta_t(\theta)\right| < K|\epsilon_t|$ and $ \underset{\tau\in\Gamma}{\sup}\
\left|\tilde{\sigma}_t(\theta)-\sigma_t(\theta)\right| <
K|\epsilon_t|\rho^t$. Thus we have $\underset{\tau\in\Gamma}{\sup}\
|\log f(\tilde{\eta}_t(\theta),\tau) - \log f(\eta_t(\theta),\tau)|<
K|\epsilon_t|\rho^t$ and finally
\begin{equation*}
    \underset{\tau\in\Gamma}{\sup}\
    \left|I_n(\tau) - \tilde{I}_n(\tau)\right| < \frac1n
    \sum_{t=1}^n K|\epsilon_t|\rho^t.
\end{equation*}
In view of the Markov inequality, the Borel-Cantelli Lemma, the existence
of a moment of order $s$ for the processus $(\epsilon_t)_t$
(Assumption \textbf{A\ref{hyp: MLE, 5 moment}}) and Assumption
\textbf{A\ref{hyp: MLE, 0}}, we obtain that $|\epsilon_t|\rho^t$
converges to 0 almost surely when $t$ tends to infinity. Then, using
the Cesàro Lemma, we obtain~\eqref{eq:négligeabilité des valeurs
initiales pour la consistence}.

We now prove that $E l_t(\tau) > E l_t(\tau_0)$, for $\tau\neq
\tau_0$.
\begin{align*}
    E l_t(\tau) - E l_t(\tau_0) &= E\left[\log\frac{\sigma_t(\theta_0)}{\sigma_t(\theta)}
        \frac{f(\eta_t(\theta),\psi)}{f(\eta_t,\psi_0)}\right]\\
    &\leq E\left[\frac{\sigma_t(\theta_0)}{\sigma_t(\theta)}
        \frac{f(\eta_t(\theta),\psi)}{f(\eta_t,\psi_0)}\right] -1\\
    &=E\left[E\left[\frac{\sigma_t(\theta_0)}{\sigma_t(\theta)}
        \frac{f(\frac{\sigma_t(\theta_0)}{\sigma_t(\theta)}\eta_t,\psi)}
        {f(\eta_t,\psi_0)}|\mathcal{G}_{t}\right]\right]-1\\
        &=0.
\end{align*}
To obtain the last equality, we used the fact that
$\frac{\sigma_t(\theta_0)}{\sigma_t(\theta)}$ is in $\mathcal{G}_{t}
= \sigma\left\{\eta_u;\ u\leq t-1\right\}$.

Now, we show that, if $E l_t(\tau) = E l_t(\tau_0)$, then
$\tau=\tau_0$ a.s. We have
$\frac{\sigma_t(\theta_0)}{\sigma_t(\theta)}
\frac{f(\frac{\sigma_t(\theta_0)}{\sigma_t(\theta)}\eta_t,\psi)}
{f(\eta_t,\psi_0)}=1$ a.s. Let $a_{t-1} =
\frac{\sigma_t(\theta_0)}{\sigma_t(\theta)}$. The variable $(\eta_t)_t$ has a Lebesgue density, this yields
\begin{equation}
    \label{eq:égalité en distribution}
    \forall x\in \mathbb{R},\ a_{t-1} f(a_{t-1}x,\psi) = f(x,\psi_0) \quad a.s.
\end{equation}
We define $X$ as a stable variable with parameters $\psi_0$, $X\sim
S(\psi_0)$, and $Y = a_{t-1} X$. Then, using~\eqref{eq:égalité en
distribution} we show that the pdf of $Y$ conditionally to
$\mathcal{G}_{t}$ is $f(x,\psi)$, thus $Y\sim S(\psi)$. Now, for
$u\in \mathbb{R}$, we write the characteristic function of $Y$ and
obtain
\begin{align*}
    E\left[e^{iuY}|\mathcal{G}_{t}\right] &= E\left[e^{iua_{t-1}X}|\mathcal{G}_{t}\right],
\end{align*}
that is $\varphi_{\psi}(u) = \varphi_{\psi_0}(a_{t-1} u)$. Applying
the modula to the previous equation yields
\begin{equation*}
    \forall u\in\mathbb{R},\ \exp\left\{-|u|^\alpha\right\} =
    \exp\left\{-|a_{t-1}u|^{\alpha_0}\right\}.
\end{equation*}
Therefore we have $\alpha=\alpha_0$ and we easily obtain
$\beta=\beta_0$ and $\mu=\mu_0$. We also obtain $a_{t-1}=1$ almost
surely and we deduce with Assumption \textbf{A\ref{hyp: MLE, 3
identifiabilité}} that $\theta=\theta_0$ a.s.

Now, for $\tau \in \Gamma$, let $V_k(\tau)$ be the open ball with
center $\tau$ and radius $1/k$, using~\eqref{eq:négligeabilité des
valeurs initiales pour la consistence}, it follows that
\begin{equation}
    \label{eq: consistence MLE, CI}
    \text{liminf}\ \underset{\tau^*\in V_k(\tau)\cap\Gamma}{\inf} \tilde{I}_n(\tau^*)
    = \text{liminf}\ \underset{\tau^*\in V_k(\tau)\cap\Gamma}{\inf} {I}_n(\tau^*),
\end{equation}
The ergodic theorem yields $\text{liminf}\
\underset{\tau^*\in V_k(\tau)\cap\Gamma}{\inf} {I}_n(\tau^*) \geq
E\left[\underset{\tau^*\in
V_k(\tau)\cap\Gamma}{\inf}l_t(\tau^*)\right]$. When $k\rightarrow+\infty$, then $E\left[\underset{\tau^*\in V_k(\tau)\cap\Gamma}{\inf}l_t(\tau^*)\right]$ tends toward $E\left[l_t(\tau)\right]$. Therefore, we have
\begin{equation*}
    \forall \tau \neq \tau_0,\ \exists V(\tau),\
    \text{liminf}\ \underset{\tau^*\in V_k(\tau)\cap\Gamma}{\inf} \tilde{I}_n(\tau^*)
    > El_t(\tau_0).
\end{equation*}
We conclude by a standard compactness argument, using Assumption
\textbf{A\ref{hyp: MLE, 4 compact}} and obtain~\eqref{eq: MLE,
consistence}.

\subsection{Proof of the asymptotic normality in Theorem~\ref{thm: MLE,
consistency}} \label{sec: proof of the asymptotic normality MLE}

\begin{lem}
    \label{lem: normalité asymptotique 1}
    Under the assumptions of Theorem~\ref{thm: MLE, consistency}, we
    have
    \begin{equation}
        \label{eq:convergence en loi  TCL}
        \sqrt{n} \frac{\partial I_n}{\partial \tau}(\tau_0)
        \arrowloi \mathcal{N}(0,J).
    \end{equation}
\end{lem}

\begin{proof}
    For $\lambda \in \mathbb{R}^{m+3}$, $n>0$ and $t>0$,
    let $\nu_{nt} = \frac{1}{\sqrt{n}}\lambda' \frac{\partial
    l_t}{\partial\tau}(\tau_0)$. We prove that $(\nu_{nt},\mathcal{G}_{t-1})$ is a
    martingale difference. We have, for $\tau=(\theta',\psi')'\in\Gamma$
    \begin{align}
        \frac{\partial l_t}{\partial\theta}(\tau) &= \frac{1}{2}\phi_t(\theta)Z_t(\tau),
            \label{eq: proof, dérivées prem 1}\\
        \frac{\partial l_t}{\partial \psi}(\tau) &=
            -\frac{\partial\log f}{\partial \psi}(\eta_t(\theta),\psi)
            \label{eq: proof, dérivées prem 2},
    \end{align}
    with $Z_t(\tau) =
    1+\eta_t(\theta)\frac{f'(\eta_t(\theta),\psi)}{f(\eta_t(\theta),\psi)}.$

    Since $\sigma_t^2(\theta_0) \in \mathcal{G}_{t}$, $\sigma_t(\theta_0)$
    and $\eta_t$ are independent and
    \begin{equation*}
        E\left|\frac{\partial l_t}{\partial\theta}(\tau_0)\right|=
        E\left|\phi_t(\theta_0)\right| E\left|Z_t(_au_0)\right|.
    \end{equation*}
    The function $x\mapsto 1+x\frac{f'(x,\psi_0)}{f(x,\psi_0)}$ is
    bounded, therefore we have $\E\left|Z_t(\tau_0)\right|
    <+\infty$. Moreover, with Assumption \textbf{A\ref{hyp: MLE 8}},
    we have $E\left|\phi_t(\theta_0)\right|<+\infty$. With~\eqref{eq:tail behavior
    3}-\eqref{eq:tail behavior 5}, we obtain
    $E\left|\nu_{nt}\right|<+\infty$.

    Now, we have
    \begin{equation*}
        E\left[\frac{\partial l_t}{\partial\theta}(\tau_0)|\mathcal{G}_t\right]
        =\phi_t(\theta_0)E\left[Z_t(\tau_0)\right].
    \end{equation*}
    And,
    \begin{equation*}
        E\left[Z_t(\tau_0)\right] = 1+\int_{\mathbb{R}} xf'(x,\psi_0)dx = 0.
    \end{equation*}
    We infer $E\left[\frac{\partial l_t} {\partial\theta}(\tau_0) \right]
    = 0$. Now, we prove $E\left[\frac{\partial l_t}
    {\partial\alpha}(\tau_0)|\mathcal{G}_t\right] = 0$. Note that
    \begin{equation*}
        E\left[\frac{\partial l_t}{\partial\alpha}(\tau_0)|\mathcal{G}_t\right]
        = -\int_{\mathbb{R}} \frac{\partial f}{\partial\alpha}(x,\psi_0) dx.
    \end{equation*}
    The function $x\mapsto f(x,\psi_0)$ is integrable, therefore
    \begin{equation*}
        \frac{\partial\varphi_{\psi_0}(u)}{\partial\alpha}
        =-\int_{\mathbb{R}} e^{iux} \frac{\partial f}{\partial\alpha}(x,\psi_0) dx,
        \quad \text{and} \quad
        \frac{\partial\varphi_{\psi_0}(0)}{\partial\alpha}
        = E\left[\frac{\partial l_t}{\partial\alpha}(\tau_0)\right].
    \end{equation*}
    We have, $\forall \alpha,\ \varphi_{\alpha,\beta_0,\mu_0,\gamma_0}(0) = 0$,
    thus,
    $\frac{\partial\varphi_{\psi_0}}{\partial\alpha}(0) = 0$.
    Using the same method for $\frac{\partial l_t}{\partial\beta}$,
    $\frac{\partial l_t}{\partial\mu}$ and $\frac{\partial l_t}{\partial\gamma}$
    we obtain $E\left[\nu_{nt}|\mathcal{G}_t\right] = 0$.

    We now prove that the covariance matrix of the vector of derivatives of $l_t$ is finite, we have
    \begin{equation*}
        E\left[\frac{\partial l_t}{\partial\theta}(\tau_0)
        \frac{\partial l_t}{\partial\theta'}(\tau_0)\right]
        =\frac{1}{4}E\left[Z_t^2(\tau_0)\right]E\left[\phi_t(\theta_0)\phi_t'(\theta_0)\right],
    \end{equation*}
    with $E\left[Z_t^2(\tau_0)\right] =
    1+2\int_{\mathbb{R}}xf'(x,\psi_0)dx +
    \int_{\mathbb{R}}
    x^2\frac{f^{'2}(x,\psi_0)}{f(x,\psi_0)} dx$.
    From the asymptotic expansions in~\eqref{eq:tail behavior 1}-\eqref{eq:tail behavior 5},
    we obtain
    $E\left[Z_t^2(\tau_0)\right]<+\infty$, then by
    Assumption \textbf{A\ref{hyp: MLE 8}}, it can be seen that $V\left[\frac{\partial l_t}{\partial\theta}(\tau_0)\right]$ is finite.

    Using the asymptotic expansion again, we have when $x\rightarrow\pm\infty$,
    for $\psi \in A\times B\times C$
    \begin{equation}
        \frac{\partial\log f}{\partial\alpha}(x,\psi) \sim K\log|x|,\
        \frac{\partial\log f}{\partial\beta}(x,\psi) \sim K,\
        \frac{\partial\log f}{\partial\mu}(x,\psi) \sim K x^{-1},
        \label{eq: proof, asymptotique du log}
    \end{equation}
    therefore, for $(i,j)\in\left\{1,\cdots,3\right\}$, we have
    $\E\left[\frac{\partial l_t}{\partial\psi_i} \frac{\partial
    l_t}{\partial\psi_j}\right]$ is finite.
    The very same reasoning applies for $E\left[\frac{\partial l_t}{\partial\psi}
    \frac{\partial l_t}{\partial\theta}\right]$ and we deduce $V\left[\frac{\partial l_t}
    {\partial\tau}\right] < +\infty$.

    We now show that this matrix is positive-definite.
    Suppose that $(u',v')' \in \mathbb{R}^{m+3}$ are such that $(u',v'). \frac{\partial l_t}{\partial\tau}(\tau_0) = 0$. We have
    \begin{equation}
        \label{eq:positivité de la matrice de variance 1}
        \left(1+\eta_t \frac{f'(\eta_t,\psi)}{f(\eta_t,\psi)}\right)
        \frac{1}{2}u'\phi_t(\theta_0)  =
        v'\frac{\partial \log f}{\partial\psi}(\eta_t,\psi).
    \end{equation}

    Now, it can be seen that $\left(1+\eta_t
    \frac{f'(\eta_t,\psi)}{f(\eta_t,\psi)}\right)\in\mathcal{G}_{t+1}$,
    $v'\frac{\partial \log f}{\partial\psi}
    (\eta_t,\psi)\in\mathcal{G}_{t+1}$ and
    $\frac{1}{2}u'\phi_t(\theta_0) \in \mathcal{G}_t$. Thus, we
    have $\frac{1}{2}u'\phi_t(\theta_0)=Q$ and
    \begin{equation*}
        Q\left(1+\eta_t \frac{f'(\eta_t,\psi)}{f(\eta_t,\psi)}\right)=
        v_1\frac{\partial \log f}{\partial \alpha}(\eta_t,\alpha,\beta)+
        v_2\frac{\partial \log f}{\partial \beta}(\eta_t,\alpha,\beta)+
        v_3\frac{\partial \log f}{\partial \mu},
    \end{equation*}
    with $v=(v_1,v_2,v_3)$. We have when $x\rightarrow+\infty$,
    $x\frac{f'(x,\alpha,\beta)} {f(x,\alpha,\beta)} \sim K$, then using
    Equation~\eqref{eq: proof, asymptotique du log} and letting $x
    \rightarrow\infty$, we obtain $v_1=0$ and $\forall x \in \mathbb{R}$
    \begin{equation*}
        Q\left(f(x,\psi)+xf'(x,\psi)\right) = v_2\frac{\partial f}{\partial\beta}(x,\psi)
            +v_3\frac{\partial f}{\partial\mu}(x,\psi).
    \end{equation*}
    Now multiplying both sides of the previous equation by $e^{itx}$ and
    integrating on $\mathbb{R}$, we recognize the characteristic
    function of a stable distribution or its derivatives and obtain for
    $t\in \mathbb{R}$,
    \begin{equation*}
        -Q t\frac{\partial\varphi_\psi(t)}{\partial t}=v_2\frac{\partial
        \varphi_\psi(t)}{\partial\beta} +v_3\frac{\partial \varphi_\psi(t)}{\partial\mu}.
    \end{equation*}
    Then, for $t>0$, it follows that
    \begin{equation*}
        Q\left(\alpha t^\alpha -i\beta\tan\frac{\alpha\pi}{2}(\alpha t^\alpha-t)\right)
        =v_2 i\tan\frac{\alpha\pi}{2}(t^\alpha -t)+v_3 it.
    \end{equation*}
    Therefore, we have $Q=0$ and then $v_2=v_3=0$. Finally, with
    Assumption \textbf{A\ref{hyp: MLE last}} we obtain $u=0_m$.

    We have, if $(u',v')\ \frac{\partial l_t}{\partial\tau}(\tau_0) =
    0$ then $(u',v')'= 0_{m+3}$ and the matrix $E\left[\frac{\partial l_t}
    {\partial\tau}(\tau_0)\frac{\partial
    l_t}{\partial\tau'}(\tau_0)\right]$ is positive.
    Finally, using the central limit theorem for martingale differences
    and the Wold-Cramér Lemma we obtain~\eqref{eq:convergence en loi  TCL}
    with $\Lambda = V\left[\frac{\partial l_t}{\partial\tau}\right]$.
\end{proof}

\begin{lem}
    \label{lem:normalité asymptotique 2}
    Under the assumptions of Theorem~\ref{thm: MLE, consistency},
    for any compact subset $\Gamma^*$ in the interior of $\Gamma$,
    we have
    \begin{equation}
        \label{eq:dérivées d'ordre 3}
        E\left[\underset{\tau\in \Gamma^*}{\sup}\ \left|\frac{\partial^3 l_t}
        {\partial\tau_i\tau_j\tau_k}(\tau)\right|\right] < +\infty.
    \end{equation}
\end{lem}

\begin{proof}
    For ease of notation, the next equations are written without
    their argument $\tau=(\theta,\psi) \in \Gamma$.
    We have
    \begin{eqnarray}
        \frac{\partial^2 l_t}{\partial\theta_i\partial\theta_j} &=& \frac{1}{2}
            \left(\frac{1}{\sigma_t^2}\frac{\partial^2\sigma_t^2}{\partial\theta_i\theta_j}
            -\phi_{t,i}\phi_{t,j}\right)\left(1+\eta_t(\theta)
            \frac{\partial\log f}{\partial x}\right)
            \label{eq:dérivée seconde theta theta}\\
            &&-\frac{1}{4}\phi_{t,i}\phi_{t,j}\eta_t(\theta)\left(\frac{\partial\log f}{\partial x}
                +\eta_t(\theta)\frac{\partial^2\log f}{\partial x^2}\right),\nonumber\\
        \frac{\partial^2 l_t}{\partial\theta_i\partial\psi} &=& \frac{1}{2}
            \phi_{t,i}\eta_t(\theta)\frac{\partial^2\log f}{\partial x\partial\psi},
            \label{eq:dérivée seconde theta (alpha,beta)}\\
        \frac{\partial^3 l_t}{\partial\theta_i\partial\theta_j\partial\theta_k} &=&
        -\left(\phi_{t,i}\phi_{t,j,k}+\phi_{t,j}\phi_{t,i,k}+\phi_{t,k}\phi_{t,i,j}\right)
        \left(\frac{1}{2}+\frac{3}{4}\eta_t(\theta)\frac{\partial\log f}{\partial x} -
            \frac{1}{4}\eta_t^2(\theta)\frac{\partial^2\log f}{\partial x^2}\right)\nonumber\\
        &&+\phi_{t,i}\phi_{t,j}\phi_{t,k}
            \left(1+\frac{15}{8}\eta_t(\theta)\frac{\partial\log f}{\partial x}+
            \frac{9}{8}\eta_t^2(\theta)\frac{\partial^2\log f}{\partial x^2}+
            \frac{1}{4}\eta_t^3(\theta)\frac{\partial^3\log f}{\partial x^3}\right)
            \nonumber\\
        &&+\frac{1}{2}\phi_{t,i,j,k}\left(1+\eta_t(\theta)\frac{\partial\log f}{\partial x}\right),
        \nonumber
    \end{eqnarray}

    We show that
    \begin{equation}
        \label{eq:dérivée troisième 1}
        E\ \underset{\tau\in \Gamma}{\sup}\left|\eta_t(\theta)\frac{\partial\log
        f}{\partial x}\left(\eta_t(\theta),\psi\right)\right| < \infty.
    \end{equation}
    For any $\psi\in A\times B\times C$, the function $x \mapsto
    \left|x\frac{\partial \log f}{\partial x}(x,\psi)\right|$ is
    bounded. Then, the function $\psi\mapsto \underset{x}{\sup}\
    \left|x\frac{\partial \log f}{\partial x}(x,\psi)\right|$ is
    continuous, thus since $A\times B\times C$ is a compact set, we
    obtain~\eqref{eq:dérivée troisième 1}.

    Using the same method for $\eta_t^2(\theta)\frac{\partial^2 \log
    f}{\partial x^2}(\eta_t(\theta),\psi)$ and
    $\eta_t^3(\theta)\frac{\partial^3 \log f}{\partial
    x^3}(\eta_t(\theta),\psi)$ and using Assumption
    \textbf{A\ref{hyp: MLE 8}} we obtain that, for any compact subset
    $\Gamma^*$ in the interior of $\Gamma$
    \begin{equation*}
        E\left[\underset{\tau\in V}{\sup}\ \left|\frac{\partial^3 l_t}
            {\partial\theta_i\theta_j\theta_k}(\tau)\right|\right] < +\infty.
    \end{equation*}
    With the same reasoning and other calculations, we obtain Equation
   ~\eqref{eq:dérivées d'ordre 3}.
\end{proof}

\begin{lem}
    \label{lem:normalité asymptotique 3}
    Under the assumptions of Theorem~\ref{thm: MLE, consistency}, we
    have when $n\rightarrow +\infty$
    \begin{align}
        \left\|\frac{1}{\sqrt{n}}\sum_{t=1}^n\left\{\frac{\partial l_t}{\partial\tau}(\tau_0)-
            \frac{\partial \tilde{l}_t}{\partial\tau}(\tau_0)\right\}\right\|
            &\rightarrow 0,
            \label{eq:conditions initiales 1}\\
        \underset{\tau\in\Gamma}{\sup}\left\|
            \frac1n\sum_{t=1}^n\left\{\frac{\partial^2 l_t}{\partial\tau\partial\tau'}(\tau)-
            \frac{\partial^2 \tilde{l}_t}{\partial\tau\tau'}(\tau)\right\}\right\|
            &\rightarrow 0.
            \label{eq: conditions initiales 2}
    \end{align}
\end{lem}

\begin{proof}
    We have
    \begin{align}
        \frac{\partial l_t}{\partial\theta}(\tau_0)-
            \frac{\partial \tilde{l}_t}{\partial\theta}(\tau_0) &=
            \frac{1}{2}\tilde{\phi}_t\tilde{Z}_t-\frac{1}{2}\phi_t Z_t\nonumber\\
        &= \frac{1}{2}\left\{\tilde{\phi}_t\left(\tilde{Z}_t-Z_t\right)+
            Z_t\left(\tilde{\phi}_t-\phi_t\right)\right\}\label{eq:conditions initiales 3}.
    \end{align}
    For $\psi\in A\times B\times C$, define the function $h$ as $h(x) =
    1+x\frac{\partial\log f}{\partial x}(x,\psi)$, we have
    $\tilde{Z}_t-Z_t = h\left(\tilde{\eta}_t\right)-h(\eta_t)$. When
    $x\rightarrow\pm\infty$, we have $h'(x)=O(x^{-1})$, therefore, with
    the mean value theorem, we have
    \begin{equation*}
        \left|\tilde{Z}_t-Z_t\right| < K|\tilde{\eta}_t-\eta_t|
            < K|\epsilon_t|\left|\tilde{\sigma}_t^2-\sigma_t^2\right|.
    \end{equation*}
    On the other side, concerning the second term in ~\eqref{eq:conditions initiales 3}, we obtain
    \begin{align*}
        \left|\tilde{\phi}_t-\phi_t\right| &< \frac{1}{\tilde{\sigma}_t^2}
            \left|\frac{\partial\tilde{\sigma}_t^2}{\partial\theta}-
            \frac{\partial{\sigma}_t^2}{\partial\theta}\right|
            +\frac{\partial{\sigma}_t^2}{\partial\theta}\left|
            \frac{1}{\tilde{\sigma}_t^2}-\frac{1}{{\sigma}_t^2}\right|\\
        &< K\left|\frac{\partial\tilde{\sigma}_t^2}{\partial\theta}-
            \frac{\partial{\sigma}_t^2}{\partial\theta}\right|
            +\frac{1}{{\sigma}_t^2}\frac{\partial{\sigma}_t^2}{\partial\theta}
            \left|\frac{\sigma_t^2}{\tilde{\sigma}_t^2}-1\right|.
    \end{align*}
    Concerning the derivatives relative to the stable parameter $\psi$,
    it follows with the mean value theorem that
    \begin{equation*}
        \left|\frac{\partial \log f}{\partial\alpha}(\tilde{\eta}_t,\psi)
            -\frac{\partial \log f}{\partial\alpha}({\eta}_t,\psi)\right|
            < \underset{x\in\mathbb{R}}{\sup}\left|\frac{\partial^2\log f}
            {\partial x\partial \alpha}(x,\psi)\right| |\tilde{\eta}_t-\eta_t|.
    \end{equation*}
    The derivative of $\log f$ with respect to $\alpha$ and $x$ is bounded. Besides, we have
    \begin{equation*}
        \left|\tilde{\eta}_t -\eta_t\right| = \left|\frac{\epsilon_t}{\tilde{\sigma}_t}
        -\frac{\epsilon_t}{{\sigma}_t}\right| <
        K|\epsilon_t|\left|\tilde{\sigma}_t^2-\sigma_t^2\right|.
    \end{equation*}
    We can apply the same method for the derivatives with respect to $\beta$
    and $\mu$. Therefore, using Assumption \textbf{A\ref{hyp: MLE 9}},
    the Markov inequality, the Borel-Cantelli lemma and the Cesàro
    Lemma, we easily obtain~\eqref{eq:conditions initiales 1}.

    Using~\eqref{eq:dérivée seconde theta theta},~\eqref{eq:dérivée
    seconde theta (alpha,beta)}, the second part of Assumption
    \textbf{A\ref{hyp: MLE 9}} and the same techniques as before, we
    obtain~\eqref{eq: conditions initiales 2}.
\end{proof}

\begin{proof}[Proof of Theorem~\ref{thm: MLE, consistency}]
    From the definition of the ML estimator $\tau_n$, we have
    $\frac{\partial \tilde{I}_n}{\partial\tau}(\tau_n)=0$, writing a
    Taylor expansion, we infer
    \begin{equation*}
        0=\frac{\partial \tilde{I}_n}{\partial\tau}(\tau_n)=\frac{\partial \tilde{I}_n}
        {\partial\tau}(\tau_0)+
        \frac{\partial^2 \tilde{I}_n}{\partial\tau\partial\tau'}(\tau^*)(\tau_n-\tau_0),
    \end{equation*}
    where $\tau^*$ is between $\tau_0$ and $\tau_n$. Using another
    Taylor expansion, Lemma~\ref{lem:normalité asymptotique 2}, the
    almost sure convergence of $\tau_n$ to $\tau_0$ and the ergodic
    theorem, we obtain
    \begin{equation*}
        \frac{\partial^2 I_n}{\partial\tau\partial\tau'}(\tau^*) \rightarrow
        \E\left[\frac{\partial^2 l_t}{\partial\tau\partial\tau'}(\tau_0)\right],
        \quad \mbox{a.s.}
    \end{equation*}
    Then, in view of Equations~\eqref{eq: proof, dérivées prem 1},~\eqref{eq:
    proof, dérivées prem 2},~\eqref{eq:dérivée seconde theta theta} and
   ~\eqref{eq:dérivée seconde theta (alpha,beta)}, we obtain
    \begin{equation*}
    	\E\left[\frac{\partial^2 l_t}{\partial\tau
    \partial\tau'}(\tau_0)\right] = \E\left[\frac{\partial
    l_t}{\partial\tau}(\tau_0)\frac{\partial
    l_t}{\partial\tau'}(\tau_0)\right].
    \end{equation*}
    By Lemmas~\ref{lem: normalité asymptotique 1} and~\ref{lem:normalité asymptotique 3} we can conclude and obtain~\eqref{eq: MLE, normalité asymptotique}.
\end{proof}

\subsection{Proof of the consistency in Theorem~\ref{theo: pseudo GARCH CAN}}

Let $I_n$ (resp. $l_{nt}$) be the equivalent of  $\tilde{I}_n$
(resp. $\tilde{l}_{nt}$) when an infinite past is known.
\begin{equation*}
    I_n(\tau) = \frac{1}{n}\sum_{t=1}^{n}l_{nt}(\tau),\ \mbox{with}\quad
    l_{nt}(\tau) = \frac{1}{2}\log\sigma_{nt}^2(\theta) -
    \log f\left(\frac{\epsilon_{nt}}{\sigma_{nt}(\theta)}, \psi\right).
\end{equation*}
We also need to define the equivalent of these quantities when the
processus $(\eta_{nt})_t$ is replaced by its limit in distribution
$(\eta_t)_t$. If $(\epsilon_t)_t$ is the stationary ergodic solution of
Model~\eqref{eq: pseudo GARCH, modèle sans n}, we define
$\sigma_{t}^2(\theta) = \frac{\omega}{\mathcal{B}_\theta(1)}
+\mathcal{B}_\theta^{-1}(L)\mathcal{A}_\theta(L)\epsilon_{t}^2$ and
$l_t(\tau)=\frac{1}{2}\log\sigma_{t}^2(\theta) - \log
f\left(\frac{\epsilon_{t}}{\sigma_{t}(\theta)},\psi\right)$.

Assumption \textbf{B\ref{hyp: pseudo GARCH, convergence unif eta n}} can only be used for quantities which depend on a finite number of $(\eta_{nt})_t$. Therefore, we introduce $\sigma_{nt}^{2(m)}(\theta)$, a truncated version of $\sigma_{nt}^2(\theta)$. For that, we give a vector representation of the GARCH$(p,q)$ model as in \citet{bougerol1992stationarity},
\begin{equation*}
    \underline{z}_{nt} = \underline{b}_{nt} + A_{nt} \underline{z}_{nt-1},
\end{equation*}
where
\begin{equation*}
    \underline{b}_{nt} = \left(
        \begin{array}{c}
            \omega_0\eta_{nt}^2\\
            0\\
            \vdots\\
            \omega_0\\
            0\\
            \vdots\\
        \end{array}
    \right)\in\mathbb{R}^{p+q},\
    \underline{z}_{nt} = \left(
        \begin{array}{c}
            \epsilon_{nt}^2\\
            \vdots\\
            \epsilon_{nt-q+1}^2\\
            \sigma_{nt}^2\\
            \vdots\\
            \sigma_{nt-p+1}^2\\
        \end{array}
    \right) \in\mathbb{R}^{p+q},
\end{equation*}
and
\begin{equation*}
    A_{nt} = \left(
        \begin{array}{cccccccccc}
             a_{01} \eta_{nt}^2 &  & \cdots &  & a_{0q}\eta_{nt}^2 &
             b_{01}\eta_{nt}^2 &  & \cdots &  & b_{0p}\eta_{nt}^2 \\
             1 & 0 & \cdots &  & 0 & 0 &  & \cdots &  & 0 \\
             0 & 1 & \cdots &  & 0 & 0 &  & \cdots &  & 0 \\
             \vdots & \ddots & \ddots &  & \vdots & \vdots & \ddots & \ddots &  & \vdots \\
             0 &   & \cdots & 1 & 0 & 0 &  & \cdots  & 0 & 0 \\
             a_{01} &  & \cdots &  & a_{0q} & b_{01} &  & \cdots &  & b_{0p} \\
             0 &  & \cdots &  & 0 & 1 & 0 & \cdots &  & 0 \\
             0 &  & \cdots &  & 0 & 0 & 1 & \cdots &  & 0 \\
             \vdots & \ddots & \ddots &  & \vdots & \vdots & \ddots & \ddots &  & \cdots \\
             0 &  & \cdots & 0 & 0 & 0 &  & \cdots & 1 & 0 \\
        \end{array}
             \right).
\end{equation*}
We also define $\underline{z}_t,\ \underline{b}_t$ and $A_t$, the
counterparts of $\underline{z}_{nt},\ \underline{b}_{nt}$ and
$A_{nt}$ where $\eta_{nt}$ is replaced by the iid sequence $(\eta_t)_t$
defined in~\eqref{eq: pseudo GARCH, modèle sans n}. Note that
$\gamma_{n}$ is the top Lyapunov exponent associated to the sequence
$(A_{nt})_{t\in\mathbb{Z}}$. Now, we prove that Assumption
\textbf{B\ref{hyp: pseudo GARCH, stationnarité}} implies that
$\gamma_n$ is inferior to zero.

With Lemma~\ref{lem: pseudo GARCH Lyapunov} below, we obtain for any
$n\in\mathbb{N}$,
\begin{equation}
    \label{eq: pseudo GARCH AN, ecriture infine z nt}
    \underline{z}_{nt} = \underline{b}_{nt} + \sum_{k=1}^{+\infty}
    A_{nt}A_{nt-1}\ldots A_{nt-k+1}\underline{b}_{nt-k}.
\end{equation}
We define the truncated version of $\underline{z}_{nt}$. For any
$m\in\mathbb{N}$,
\begin{equation}
    \label{eq: pseudo GARCH, définition z (m)}
    \underline{z}_{nt}^{(m)} = \underline{b}_{nt} + \sum_{k=1}^{m}
    A_{nt}A_{nt-1}\ldots A_{nt-k+1}\underline{b}_{nt-k},\
    \forall t\in\mathbb{Z},\ \forall n\in\mathbb{N}.
\end{equation}
In particular, if $X(k)$ is the $k^{\mbox{th}}$ element of the
vector $X$, we define a truncated version of $\sigma_{nt}^2$, that
is
\begin{equation}
    \label{eq: pseudo GARCH, définition sigma (m)}
    \underline{z}_{nt}^{(m)}(q+1) = \underline{b}_{nt}(q+1) + \left(\sum_{k=1}^{m}
        A_{nt}A_{nt-1}\ldots A_{nt-k+1}\underline{b}_{nt-k}\right)(q+1).
\end{equation}
The quantity $\underline{z}_{nt}^{(m)}(q+1)$ depends only on
$\left\{\eta_{nt-1},\ldots,\eta_{nt-m}\right\}$.

Then we define $\sigma_{nt}^{2(m)}(\theta)$ for any
$\theta\in\Theta$. For this purpose, we introduce another vector
representation of the model,
\begin{equation*}
    \underline{\sigma}_{nt}^2(\theta) = \underline{c}_{nt}(\theta) +
    B \underline{\sigma}_{nt-1}^2(\theta),
\end{equation*}
where
\begin{equation*}
    \underline{\sigma}_{nt}^2(\theta) =
    \left(\begin{array}{c}
        \sigma_{nt}^2(\theta)\\
        \sigma_{nt-1}^2(\theta)\\
        \vdots\\
        \sigma_{nt-p+1}^2(\theta)\\
    \end{array}\right),\;
    \underline{c}_{nt}(\theta) =
    \left(\begin{array}{c}
        \omega + \sum_{i=1}^q a_i\epsilon_{nt-i}^2\\
        0\\
        \vdots\\
        0\\
    \end{array}\right),\;
    B=
    \left(\begin{array}{cccc}
        b_1&b_2&\cdots &b_p\\
        1&0&\cdots&0\\
        \vdots&&&\\
        0&\cdots&1&0\\
    \end{array}\right).
\end{equation*}
By Assumption \textbf{B\ref{hyp: pseudo GARCH, stationnarité}}, we
have $\underset{\theta\in\Theta}{\sup}\ \rho(B) < 1$, where
$\rho(B)$ is the spectral radius of the matrix $B$, and thus, for any
$\theta\in\Theta$,
\begin{equation}
    \label{eq: pseudo GARCH, développement sigma nt}
    \underline{\sigma}_{nt}^2(\theta) = \sum_{k=0}^{+\infty} B^k
    \underline{c}_{nt-k}(\theta).
\end{equation}
We define for $m\in\mathbb{N}$,
\begin{equation}
    \label{eq: pseudo GARCH, définition sigma m}
    \sigma_{nt}^{2(m)}(\theta) = \sum_{k=0}^m B^k(1,1)
    \underline{c}_{nt-k}^{(m)}(\theta)(1),\
    \mbox{with}\
    \underline{c}_{nt}^{(m)}(\theta)(1) =\omega+\sum_{i=1}^q
    a_i\underline{z}_{nt-i}^{(m)}(q+1) \eta_{nt-i}^2.
\end{equation}
As for $\underline{z}_{nt}^{(m)}(q+1)$, the quantity $\sigma_{nt}^{2(m)}(\theta)$
depends on a finite number of $\eta_{nt'}$, but since every
$\underline{z}_{nt-i}^{(m)}(q+1)$ depends on several $\eta_{nt'}$,
$\sigma_{nt}^{2(m)}(\theta)$ depends on more than $m$ variables
$\eta_{nt'}$. To be precise, $\sigma_{nt}^{2(m)}(\theta)$ depends on
$\left\{\eta_{nt-1},\ldots,\eta_{nt-2m+q}\right\}$. Define also
$l_{nt}^{(m)}(\tau) = \frac{1}{2}\log\sigma_{nt}^{2(m)}(\theta) -
\log f\left(\frac{\epsilon_{nt}}{\sigma_{nt}^{2(m)}(\theta)},
\psi\right)$.

\begin{lem}
    Under the assumptions of Theorem~\ref{theo: pseudo GARCH CAN}, there
    exists $N\in\mathbb{N}$ such that
    \begin{equation}
        \label{eq: psuedo GARCH lyapunov}
        \forall n\geq N,\ \gamma_n < 0.
    \end{equation}
    Besides, there exist $k_0\in\mathbb{N}$ and $N\in\mathbb{N}$ such
    that
    \begin{equation}
        \label{eq: pseudo GARCH delta'}
        \chi' = \underset{n\geq N}{\sup}\ \E\left[\left\|
        A_{nk_0}A_{nk_0-1} \cdots A_{n1}\right\|^s \right] <1.
    \end{equation}
    \label{lem: pseudo GARCH Lyapunov}
\end{lem}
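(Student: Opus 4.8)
The plan is to prove the stronger statement \eqref{eq: pseudo GARCH delta'} for some small $s>0$ and deduce \eqref{eq: psuedo GARCH lyapunov} from it. For the deduction: for fixed $n\ge N$ the sequence $m\mapsto\E\log\|A_{nm}A_{nm-1}\cdots A_{n1}\|$ is subadditive and (as checked below) $\E\log^+\|A_{n1}\|<\infty$, so $\gamma_n=\inf_{m\ge1}m^{-1}\E\log\|A_{nm}\cdots A_{n1}\|$ and Jensen's inequality gives
\[
    \gamma_n\le\tfrac{1}{k_0}\E\log\|A_{nk_0}\cdots A_{n1}\|=\tfrac{1}{sk_0}\E\log\|A_{nk_0}\cdots A_{n1}\|^{s}\le\tfrac{1}{sk_0}\log\E\|A_{nk_0}\cdots A_{n1}\|^{s}<0 .
\]
So I concentrate on \eqref{eq: pseudo GARCH delta'}. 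First I treat the limiting model, whose innovations satisfy $\eta_t\sim S(\psi_0)$: the matrix $A_t$ is a fixed continuous function of $\eta_t^2$, and since $\eta_t$ is $\alpha_0$-stable one has $\E|\eta_t|^{r}<\infty$ for every $r<\alpha_0$, hence $\E\|A_1\|^{s_0}<\infty$ for some $s_0>0$. Combined with $\gamma<0$ (Assumption~\textbf{B2}), the standard argument for products of i.i.d.\ random matrices (see e.g.\ \citet{bougerol1992stationarity}, \citet{berkes2003gps}, \citet{FZ2004mle}) produces an integer $k_0$ and a real $s\in(0,\min(1,s_0)]$ with $\chi:=\E\|A_{k_0}A_{k_0-1}\cdots A_1\|^{s}<1$: since $\gamma=\inf_m m^{-1}\E\log\|A_m\cdots A_1\|<0$ one gets $\E\log\|A_{k_0}\cdots A_1\|<0$ for some $k_0$, and the convex function $s\mapsto\E\|A_{k_0}\cdots A_1\|^{s}$ equals $1$ at $s=0$ with right derivative $\E\log\|A_{k_0}\cdots A_1\|<0$, so it is $<1$ on an interval $(0,s_*]$; by convexity $\chi<1$ survives when $s$ is shrunk within $(0,s_*]$.

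Next I use Assumption~\textbf{B3}. Writing $c_n:=\sup_x(1+|x|)^{\delta}|f_n(x)-f(x,\psi_0)|\to0$, we have $f(x,\psi_0)-c_n(1+|x|)^{-\delta}\le f_n(x)\le f(x,\psi_0)+c_n(1+|x|)^{-\delta}$ for $n$ large. As $\delta>1$, both $\int|x|^{r}f(x,\psi_0)\,dx$ and $\int|x|^{r}(1+|x|)^{-\delta}\,dx$ are finite for every $r<\min(\alpha_0,\delta-1)$, whence
\[
    \sup_{n\ge N}\E|\eta_{nt}|^{r}<\infty\qquad\text{and}\qquad\E|\eta_{nt}|^{r}\longrightarrow\E|\eta_t|^{r} .
\]
Moreover $\int|f_n(x)-f(x,\psi_0)|\,dx\le c_n\int(1+|x|)^{-\delta}\,dx\to0$, so $\eta_{nt}\arrowloi\eta_t$, and therefore $A_{nk_0}\cdots A_{n1}\arrowloi A_{k_0}\cdots A_1$ by the continuous mapping theorem (the $\eta_{n1},\dots,\eta_{nk_0}$ being i.i.d.\ for each $n$). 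Then I shrink $s$ once more so that $2s(1+\varepsilon)<\min(\alpha_0,\delta-1)$ for some $\varepsilon>0$; by the previous paragraph $\chi<1$ is unchanged, and $\E\|A_{n1}\|^{s}<\infty$ for $n\ge N$, which justifies $\E\log^+\|A_{n1}\|<\infty$ used above.

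Now for the transfer: because all the GARCH coefficients are non-negative, every entry of $A_{nk_0}\cdots A_{n1}$ is a finite sum of monomials $c\prod_{\ell\in S}\eta_{n\ell}^{2}$ with $S\subseteq\{1,\dots,k_0\}$; using $\|M\|^{q}\le\sum_{i,j}|M_{ij}|^{q}$ and $(\sum_k y_k)^{q}\le\sum_k y_k^{q}$ (for $0<q\le1$, $y_k\ge0$) together with the independence of $(\eta_{nt})_t$, I bound $\E\|A_{nk_0}\cdots A_{n1}\|^{q}$, for $q\in\{s,\,s(1+\varepsilon)\}$, by a finite sum of terms $c^{q}(\E|\eta_{n1}|^{2q})^{|S|}$. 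For $q=s(1+\varepsilon)$ this bound is finite and uniform in $n\ge N$, so $\{\|A_{nk_0}\cdots A_{n1}\|^{s}\}_{n\ge N}$ is uniformly integrable; combined with $A_{nk_0}\cdots A_{n1}\arrowloi A_{k_0}\cdots A_1$ this yields $\E\|A_{nk_0}\cdots A_{n1}\|^{s}\to\chi<1$. Hence for $N$ large $\chi'=\sup_{n\ge N}\E\|A_{nk_0}\cdots A_{n1}\|^{s}\le(1+\chi)/2<1$, which is \eqref{eq: pseudo GARCH delta'}, and \eqref{eq: psuedo GARCH lyapunov} follows as at the start.

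The hard part is this last transfer: convergence in distribution of the innovations by itself is far too weak to control an $s$-th moment of a product of random matrices; it is exactly the $(1+|x|)^{\delta}$-weighted uniform convergence in Assumption~\textbf{B3} that delivers the uniform-in-$n$ moment bounds of order slightly above $2s$, hence the uniform integrability needed both to pass to the limit and to make a single pair $(k_0,s)$ work simultaneously for all $n\ge N$.
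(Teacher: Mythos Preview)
Your proof is correct and follows the same strategy as the paper: establish $\chi<1$ for the limiting model via the standard Lyapunov argument, then use the $(1+|x|)^{\delta}$-weighted density bound of Assumption~\textbf{B3} to show $\E\|A_{nk_0}\cdots A_{n1}\|^{s}\to\chi$ and conclude. The only cosmetic difference is in this last convergence step: the paper applies dominated convergence directly to the integral $\int_{\mathbb{R}^{k_0}}\|A(x_1)\cdots A(x_{k_0})\|^{s}\prod_i f_n(x_i)\,dx_i$ with dominating density $G(x)=f(x,\psi_0)+\varepsilon(1+|x|)^{-\delta}$, whereas you recast the same bound as uniform integrability (via a moment of order $s(1+\varepsilon)$) plus convergence in distribution---the two formulations are equivalent here.
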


\begin{proof}[Proof of Lemma~\ref{lem: pseudo GARCH Lyapunov}]
    With Assumption \textbf{B\ref{hyp: pseudo GARCH, stationnarité}},
    using the norm $\left\|A\right\| = \sum |a_{ij}|$, which is a
    multiplicative norm and with Lemma 2.3 in \citet{francq2010garch},
    we have the existence of $k_0\in\mathbb{N}$ and of $s>0$ such that
    \begin{equation*}
        \chi = \E\left[\left\|A_{k_0}A_{k_0-1}\cdots A_{1}\right\|^s\right] < 1.
    \end{equation*}
    Now for $n\in\mathbb{N}$, writing $A_{nt} = A(\eta_{nt})$ to
    emphasize the fact that $A_{nt}$ only depends on $\eta_{nt}$, we
    have for $s>0$
    \begin{equation*}
        \E\left[\left\|A_{nk_0}A_{nk_0-1}\cdots A_{1n}\right\|^s\right] =
        \int_{\mathbb{R}^{k_0}} \left\|A(x_1)\cdots A(x_{k_0})\right\|^s
        f_n(x_1)\cdots f_n(x_{k_0})dx_1\cdots d{x_{k_0}}.
    \end{equation*}
    For $\eps>0$, by Assumption \textbf{B\ref{hyp: pseudo GARCH,
    convergence unif eta n}}, we have the existence of $N\in\mathbb{N}$,
    such that
    \begin{equation*}
        \forall n\geq N,\ \forall x\in\mathbb{R},\ f_n(x) \leq G(x),\
        \mbox{where}\
        G(x) = f(x) +\frac{\eps}{(1+|x|)^\delta}.
    \end{equation*}
    Then, the function $A$ is such that $\forall x\in\mathbb{R},\ 0< \|A(x)\|
    < K x^2$ and therefore
    \begin{equation*}
        \left\|A(x_1)\cdots A(x_{k_0})\right\|^s f_n(x_1)\cdots f_n(x_{k_0}) \leq
        K\prod_{i=1}^{k_0} |x_i|^{2s}G(x_i),
    \end{equation*}
    Using the asymptotic expansion~\eqref{eq:tail behavior 1} and
    choosing $0<s<\min\left(\frac{\delta-1}{2},\frac{\alpha}{2}\right)$, we infer
    \begin{equation*}
        \int_{\mathbb{R}^{k_0}}
        K\prod_{i=1}^{k_0} |x_i|^{2s} G(x_i)
        \prod_{i=1}^{k_0} dx_i < +\infty,
    \end{equation*}
    Thus, since $f_n$ simply converges to $f(.,\psi_0)$, using the
    dominated convergence theorem, we obtain
    \begin{equation*}
        \underset{n\rightarrow+\infty}{\lim}\
        \E\left[\left\|A_{nk_0}A_{nk_0-1}\cdots A_{1n}\right\|^s\right] =
        \E\left[\left\|A_{k_0}A_{k_0-1}\cdots A_{1}\right\|^s\right] = \chi < 1.
    \end{equation*}
    Therefore, for $\eps>0$, there exists $N\in\mathbb{N}$ such that, for
    $n\geq N$, we have 
	\begin{equation*}
		\E\left[\left\|A_{nk_0}A_{nk_0-1}\cdots
    A_{n1}\right\|^s\right] < 1-\eps,
	\end{equation*}	    
    and thus, $\chi' =
    \underset{n\geq N}{\sup}\ \E\left[\left\|A_{nk_0}A_{nk_0-1} \cdots
    A_{n1}\right\|^s\right] <1$ and we obtain~\eqref{eq: pseudo GARCH
    delta'}.

    Then, using Lemma 2.3 from \citet{francq2010garch}, we
    obtain~\eqref{eq: psuedo GARCH lyapunov}.
\end{proof}

\begin{lem}
    \label{lem: pseudo GARCH, moment uniforme}
    Under the assumptions of Theorem~\ref{theo: pseudo GARCH CAN}, there
    exists $s>0$ such that,
    \begin{equation}
        \label{eq: pseudo GARCH, moment uniforme}
        \underset{n\in\mathbb{N}}\sup\ \E\left|\epsilon_{nt}\right|^{2s} < +\infty,
        \quad \mbox{and}\quad \underset{n\in\mathbb{N}}\sup\ \E\sigma_{nt}^{2s} <+\infty.
    \end{equation}
\end{lem}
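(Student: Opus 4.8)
The plan is to combine the explicit series representation~\eqref{eq: pseudo GARCH AN, ecriture infine z nt} of $\underline{z}_{nt}$ with the uniform contraction~\eqref{eq: pseudo GARCH delta'} already established in Lemma~\ref{lem: pseudo GARCH Lyapunov}. First I would fix $s>0$ once and for all, small enough that it is admissible in~\eqref{eq: pseudo GARCH delta'} and, at the same time, $2s<\min(\alpha_0,\delta-1)$; the bound $0<s<\min\left(\frac{\delta-1}{2},\frac{\alpha_0}{2}\right)$ used in the proof of Lemma~\ref{lem: pseudo GARCH Lyapunov} already does the job. For such an $s$ the map $x\mapsto x^{s}$ is subadditive on $\mathbb{R}_+$, which is what makes the infinite sum in~\eqref{eq: pseudo GARCH AN, ecriture infine z nt} tractable term by term.

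The preliminary step is to show $\sup_{n}\ \E|\eta_{nt}|^{2s}<+\infty$. For $n$ large enough, Assumption~\textbf{B\ref{hyp: pseudo GARCH, convergence unif eta n}} gives the domination $f_n(x)\le f(x,\psi_0)+\eps(1+|x|)^{-\delta}$ already used in the proof of Lemma~\ref{lem: pseudo GARCH Lyapunov}, and $\int_{\mathbb{R}}|x|^{2s}\left(f(x,\psi_0)+\eps(1+|x|)^{-\delta}\right)dx<+\infty$ thanks to the tail behaviour~\eqref{eq:tail behavior 1} and the choice $2s<\min(\alpha_0,\delta-1)$; for the remaining finitely many values of $n$, $\E|\eta_{nt}|^{2s}<+\infty$ follows from $\E|\eta_{nt}|^{\delta}<+\infty$ (Assumption~\textbf{B\ref{hyp: pseudo GARCH, convergence unif eta n}}) and $2s<\delta$. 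Since the nonzero entries of $\underline{b}_{nt}$ are $\omega_0\eta_{nt}^2$ and $\omega_0$, subadditivity of $x\mapsto x^s$ yields at once $\sup_n \E\|\underline{b}_{nt}\|^{s}<+\infty$, and since $0<\|A(\eta_{nt})\|<K\eta_{nt}^2$ it also yields $C_A:=\sup_n\E\|A(\eta_{nt})\|^{s}<+\infty$.

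Next I would estimate $\E\|\underline{z}_{nt}\|^{s}$ uniformly in $n\ge N$. Taking norms in~\eqref{eq: pseudo GARCH AN, ecriture infine z nt}, using subadditivity of $x\mapsto x^s$, submultiplicativity of the norm $\|A\|=\sum|a_{ij}|$, and the independence of $A_{nt}\cdots A_{nt-k+1}$ (which depends on $\eta_{nt},\dots,\eta_{nt-k+1}$) from $\underline{b}_{nt-k}$ (which depends on $\eta_{nt-k}$), one gets
\begin{equation*}
    \E\|\underline{z}_{nt}\|^{s} \le \E\|\underline{b}_{nt}\|^{s}
        + \sum_{k\ge 1} \E\left[\|A_{nt}\cdots A_{nt-k+1}\|^{s}\right]\,\E\|\underline{b}_{nt-k}\|^{s}.
\end{equation*}
Writing $k=mk_0+r$ with $0\le r<k_0$ and splitting the product $A_{nt}\cdots A_{nt-k+1}$ into $m$ consecutive blocks of length $k_0$ followed by a block of length $r$, submultiplicativity and the independence of the blocks (disjoint time indices), together with stationarity in $t$ of $(\eta_{nt})_t$ and~\eqref{eq: pseudo GARCH delta'}, give for $n\ge N$
\begin{equation*}
    \E\left[\|A_{nt}\cdots A_{nt-k+1}\|^{s}\right]\le C_A^{\,r}\,(\chi')^{m}\le \max(1,C_A^{k_0-1})\,(\chi')^{\lfloor k/k_0\rfloor}.
\end{equation*}
Since $\chi'<1$, the series $\sum_{k\ge1}(\chi')^{\lfloor k/k_0\rfloor}$ converges, and combining with $\sup_n\E\|\underline{b}_{nt}\|^{s}<+\infty$ we obtain $\sup_{n\ge N}\E\|\underline{z}_{nt}\|^{s}<+\infty$. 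As $\epsilon_{nt}^2$ and $\sigma_{nt}^2$ are components of $\underline{z}_{nt}$, this is precisely $\sup_{n\ge N}\E|\epsilon_{nt}|^{2s}<+\infty$ and $\sup_{n\ge N}\E\sigma_{nt}^{2s}<+\infty$; for the finitely many $n<N$ each expectation is finite by the classical fractional-moment results for stationary GARCH processes (e.g. \citet{francq2010garch}), so taking the maximum over $n$ yields~\eqref{eq: pseudo GARCH, moment uniforme}.

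The main obstacle is the block-decomposition estimate on $\E[\|A_{nt}\cdots A_{nt-k+1}\|^{s}]$: one must make sure that the geometric rate $\chi'$ is genuinely uniform in $n\ge N$ (this is exactly the content of~\eqref{eq: pseudo GARCH delta'}) and that the at most $k_0-1$ leftover factors in each product are controlled uniformly in $n$ by $C_A$, which in turn rests on the uniform moment bound $\sup_n\E|\eta_{nt}|^{2s}<+\infty$; once these are in place the remainder is a routine summation of a geometric series.
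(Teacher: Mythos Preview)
Your proof is correct and follows essentially the same route as the paper: bound $\E\|\underline{z}_{nt}\|^{s}$ via the series representation~\eqref{eq: pseudo GARCH AN, ecriture infine z nt}, subadditivity of $x\mapsto x^{s}$, the block decomposition $k=mk_0+r$ combined with the uniform contraction~\eqref{eq: pseudo GARCH delta'}, and the uniform bound $\sup_n\E|\eta_{nt}|^{2s}<+\infty$ obtained from Assumption~\textbf{B\ref{hyp: pseudo GARCH, convergence unif eta n}}. The only cosmetic differences are the order of the steps (you establish $\sup_n\E|\eta_{nt}|^{2s}<+\infty$ first, the paper last) and your more explicit treatment of the finitely many $n<N$.
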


\begin{proof}
    For $n\geq N$, using the inequality $(x+y)^s \leq x^s+y^s$ for
    $x,y>0$ and $s<1$, Equation~\eqref{eq: pseudo GARCH AN, ecriture infine z nt},
    the fact that the norm is multiplicative, the independence of the
    processus $(\eta_{nt})_t$ and Lemma~\ref{lem: pseudo GARCH Lyapunov}, we obtain
    \begin{equation}
        \label{eq: stable garch grouimpf}
        \E\left\|\underline{z}_{nt}\right\|^s \leq \left\|
        \E\underline{b}_{n1}\right\|^s\left\{1+ \sum_{k=0}^{+\infty}
        \chi^{'k}\sum_{i=1}^{k_0} \left\{\E\left\|A_{n1}\right\|^s\right\}^i\right\}.
    \end{equation}
    Now, we prove that there exists $s>0$ such that
    $\underset{n\in\mathbb{N}}{\sup}\ \E|\eta_{nt}|^{2s} < +\infty$.
    In view of Assumption \textbf{B\ref{hyp: pseudo GARCH, convergence unif
    eta n}}, we obtain that $\E|\eta_{nt}|^{2s}$ converges toward
    $\E|\eta_t|^{2s}<+\infty$, for $s<\delta/2$. We used the dominated convergence theorem again and also the fact that for any $n\in\mathbb{N}$, we have $\E|\eta_{nt}|^{2s}<+\infty$ for a small enough $s>0$. Therefore, with~\eqref{eq: stable garch grouimpf}, it follows that
    \begin{equation*}
        \underset{n\in\mathbb{N}}{\sup}\ \E\left\|\underline{z}_{nt}\right\|^s <+\infty.
    \end{equation*}

    Now, for any $n\in\mathbb{N}$ and any $t\in\mathbb{Z}$, we have $\sigma_{nt}^2\leq\left\|\underline{z}_{nt}\right\|$ and $\epsilon_{nt}^2\leq \left\|\underline{z}_{nt}\right\|$. Consequently, we obtain ~\eqref{eq: pseudo GARCH, moment uniforme}.
\end{proof}

\begin{lem}
    \label{lem: pseudo GARCH, convergence du sigma tronqué}
    Under the assumptions of Theorem~\ref{theo: pseudo GARCH CAN}, there exists $s>0$ such that,
    \begin{equation}
        \label{eq: pseudo GARCH, convergence du sigma tronqué}
        \underset{n\in\mathbb{N}}{\sup}\ \underset{\theta\in\Theta}{\sup}\
        \E\left|\sigma_{nt}^{2s}(\theta)-\sigma_{nt}^{2(m){s}}(\theta)\right|
        < K\rho^m.
    \end{equation}
\end{lem}

\begin{proof}
    We first prove that there exists $s>0$ such that
    \begin{equation}
        \label{eq: pseudo GARCH, convergence sigma tronqué en theta_0}
        \underset{n\in\mathbb{N}}{\sup}\
        \E\left|\sigma_{nt}^{2s}(\theta_0)- \underline{z}_{nt}^{(m)s}(q+1)
        \right|
        <K\rho^m.
    \end{equation}
    For $m\geq k_0^2$, let $\lfloor m/k_0\rfloor$ be the floor function of $m/k_0$ ($k_0$
    being defined as in Lemma~\ref{lem: pseudo GARCH Lyapunov}),
    we have
    \begin{equation*}
        \left\|\underline{z}_{nt} - \underline{z}_{nt}^{(\lfloor m/k_0\rfloor)}\right\| =
        \sum_{k=\lfloor m/k_0\rfloor +1}^{+\infty} \left\|A_{nt}\cdots A_{nt-k+1}\right\|
        \left\|\underline{b}_{nt-k}\right\|.
    \end{equation*}
    The constant $s$ can be taken such that $s<1$ and, using the
    inequality $(x+y)^s \leq x^s+y^s$ for $x,y>0$ and the independence of the processus $(\eta_{nt})_t$, we infer
    \begin{align*}
        \underset{n\geq N}{\sup}\
        \E\left[\left\|\underline{z}_{nt} - \underline{z}_{nt}^{(\lfloor m/k_0\rfloor)}
        \right\|^s\right] &\leq
        \sum_{k=\lfloor m/k_0\rfloor +1}^{+\infty}
        \underset{n\geq N}{\sup}\ \E\left[\left\|A_{nt}\cdots A_{nt-k+1}\right\|^s\right]
        \underset{n\geq N}{\sup}\ \E\left[\left\|\underline{b}_{nt-k}\right\|^s\right]\\
        &\leq \underset{n\geq N}{\sup}\ \E\left[\left\|\underline{b}_{n1}\right\|^s\right]
        \sum_{k=\lfloor m/k_0\rfloor +1}^{+\infty}
        \chi^{'k}\sum_{i=1}^{k_0}\left\{\underset{n\geq N}{\sup}\
        \E\left\|A_{n1}\right\|^s\right\}^i\\
        &\leq K\rho^m,
    \end{align*}
    defining $N\in\mathbb{N}$ and using similar arguments as in the proof of Lemma~\ref{lem: pseudo
    GARCH, moment uniforme}. With exactly the same arguments, we obtain
    for any $n\in\mathbb{N}$ the existence of $K_n>0$ and $\rho_n <1$
    such that
    \begin{equation*}
        \E\left[\left\|\underline{z}_{nt} - \underline{z}_{nt}^{(\lfloor m/k_0\rfloor)}
        \right\|^s\right]  \leq K_n\rho_n^m.
    \end{equation*}
    Thus, there exist $K>0$ and $\rho<1$ such that
    \begin{equation*}
        \underset{n\in\mathbb{N}}{\sup}\
        \E\left[\left\|\underline{z}_{nt} - \underline{z}_{nt}^{(\lfloor m/k_0\rfloor)}
        \right\|^s\right] \leq K\rho^m.
    \end{equation*}
    Then, we use the inequality $\left|\sigma^{2s}_{nt}-
    \underline{z}_{nt}^{(m)s}(q+1)\right| \leq \left|\sigma^{2}_{nt}-
    \underline{z}_{nt}^{(m)}(q+1)\right|^s\leq
    \left\|\underline{z}_{nt} - \underline{z}_{nt}^{(m)} \right\|^s$ and
    obtain Equation~\eqref{eq: pseudo GARCH, convergence sigma tronqué en
    theta_0}. We also obtain
    \begin{equation}
        \label{eq: pseudo GARCH, convergence des sigma le tout à la puissance s}
        \underset{n\in\mathbb{N}}{\sup}\
        \E\left[\left|\sigma^{2}_{nt}-\underline{z}_{nt}^{(m)}(q+1)\right|^s\right] \leq K\rho^m.
    \end{equation}

    We now prove the inequality~\eqref{eq: pseudo GARCH, convergence du sigma
    tronqué}. We remark that for any $m\in\mathbb{N}$ and for any
    $\theta\in\Theta$, we have $\sigma_{nt}^{2(m)}(\theta)
    \leq\sigma_{nt}^{2}(\theta)$. Then, we have
    \begin{eqnarray}
        &\underset{n\in\mathbb{N}}{\sup}&\!\! \underset{\theta\in\Theta}{\sup}\
        \E\left[\left|\sigma_{nt}^{2s}(\theta)-\sigma_{nt}^{2(m)s}(\theta)\right|\right]
        \leq
        \sum_{k=m+1}^{+\infty} \underset{\theta\in\Theta}{\sup}\ B^k(1,1)^s
        \underset{n\in\mathbb{N}}{\sup}\
        \E\left[\underset{\theta\in\Theta}{\sup}\ \underline{c}_{nt-k}^s(\theta)(1)\right]
        \label{eq: pseudo GARCH, convergence sigma m theta}\\
        &&+\sum_{k=0}^m \underset{\theta\in\Theta}{\sup}\ B^k(1,1)^s
        \sum_{i=1}^q \underset{\theta\in\Theta}{\sup}\ a_i^s\underset{n\in\mathbb{N}}{\sup}\
        \E\eta_{nt-i-k}^{2s}\underset{n\in\mathbb{N}}{\sup}\
        \E\left[\left(\sigma_{nt-i-k}^2 - \underline{z}_{nt-i-k}^{(m)}(q+1)\right)^s\right].        \nonumber
    \end{eqnarray}
    In view of the second part of Assumption \textbf{B\ref{hyp: pseudo GARCH, stationnarité}}, we have $\underset{\theta\in\Theta}{\sup}\ \rho(B)<1$. Then,
    using Lemma~\ref{lem: pseudo GARCH, moment uniforme}, we obtain
    \begin{equation*}
        \sum_{k=m+1}^{+\infty} \underset{\theta\in\Theta}{\sup}\ B^k(1,1)^s
        \underset{n\in\mathbb{N}}{\sup}\
        \E\left[\underset{\theta\in\Theta}{\sup}\ \underline{c}_{nt-k}^s(\theta)(1)\right]
        \leq K\rho^m.
    \end{equation*}
    Now for the second part of~\eqref{eq: pseudo GARCH, convergence
    sigma m theta}, Equation~\eqref{eq: pseudo GARCH, convergence des sigma le tout à la puissance s} and the fact that $\rho(B) <1$ yield
    \begin{equation*}
        \sum_{k=0}^m \underset{\theta\in\Theta}{\sup}\ B^k(1,1)^s
        \sum_{i=1}^q \underset{\theta\in\Theta}{\sup}\ a_i^s\underset{n\in\mathbb{N}}{\sup}\
        \E\eta_{nt-i-k}^{2s}\underset{n\in\mathbb{N}}{\sup}\
        \E\left[\left|\sigma_{nt-i-k}^2 - \underline{z}_{nt-i-k}^{2(m)}(q+1)\right|^s\right]
        \leq K\rho^m.
    \end{equation*}
    Finally, having treated the two terms of the right hand of~\eqref{eq:
    pseudo GARCH, convergence sigma m theta}, we obtain~\eqref{eq:
    pseudo GARCH, convergence du sigma tronqué}.
\end{proof}

\begin{lem}
    \label{lem: pseudo GARCH, convergence des moments}
    Under the assumptions of Theorem~\ref{theo: pseudo GARCH CAN}, we
    have for any $d\in\mathbb{N}$ and for any subset $V\subset\Gamma$
    \begin{align}
        &\E\left[\left|\underset{\tau\in V}{\inf}\ l_{t}(\tau)\right|^d\right] < +\infty,\label{eq: lemme espérance, result1}\\
        &\E\left[\left(\underset{\tau\in V}{\inf}\ l_{nt}(\tau)\right)^d\right]
            \underset{n\rightarrow +\infty}{\longrightarrow}
            \E\left[\left(\underset{\tau\in V}{\inf}\ l_{t}(\tau)\right)^d\right],\label{eq: lemme espérance, result}\\
        &\E\left[\left|\underset{\tau\in V}{\inf}\ l_{nt}(\tau)\right|^d\right]
            \underset{n\rightarrow +\infty}{\longrightarrow}
            \E\left[\left|\underset{\tau\in V}{\inf}\ l_{t}(\tau)\right|^d\right]\label{eq: lemme espérance, result, abs}.
    \end{align}
\end{lem}

\begin{proof}
We prove~\eqref{eq: lemme espérance, result} in the case $d=1$. The
other cases and~\eqref{eq: lemme espérance, result, abs} can be
obtained with similar arguments. We will prove the following
intermediate results. For any subset $V\subset\Gamma$
    \renewcommand{\labelenumi}{(\roman{enumi})}
    \begin{enumerate}
        \item \label{step: proof, 1} $\underset{n\in\mathbb{N}}{\sup}\
            \E\left|\underset{\tau\in V}{\inf}\ l_{nt}(\tau) -
            \underset{\tau\in V}{\inf}\
            l_{nt}^{(m)}(\tau)\right| < K\rho^m$.
        \item  $\E\left|\underset{\tau\in V}{\inf}\ l_{t}(\tau) -
            \underset{\tau\in V}{\inf}\ l_{t}^{(m)}(\tau)\right|
            < K\rho^m$.
        \item For any $m>0$, $\E \underset{\tau\in V}{\inf}\
            l_{nt}^{(m)}(\tau) \rightarrow \E \underset{\tau\in
            V}{\inf}\ l_t^{(m)}(\tau)$, when
            $n\rightarrow+\infty$.
    \end{enumerate}
    We have for any $\theta\in\Theta$, $\sigma_{nt}^2(\theta) \geq
    \omega$. Since $\Theta$ is a compact set, there exists
    $\underline{\omega}> 0$ such that, $\forall\theta\in\Theta,\ \forall
    t\in\mathbb{Z},\ \forall n\in\mathbb{N},\ \sigma_{nt}^2(\theta) \geq
    \underline{\omega}$ and
    $\forall\theta\in\Theta,\ \forall t\in\mathbb{Z},\ \forall
    n\in\mathbb{N},\ \sigma_{nt}^{2(m)}(\theta) \geq\underline{\omega}$.
    From Lemma~\ref{lem: pseudo GARCH, convergence du sigma tronqué} and
    using the mean value theorem, it follows that
    \begin{align}
        \underset{n\in\mathbb{N}}{\sup}\ \underset{\theta\in\Theta}{\sup}\
        \E\left|
        \log\sigma_{nt}^2(\theta)-\log\sigma_{nt}^{2(m)}(\theta)\right| &
        \leq K \underset{n\in\mathbb{N}}{\sup}\ \underset{\theta\in\Theta}{\sup}\
        \E \left|\sigma_{nt}^{2s}(\theta)- \sigma_{nt}^{2(m)s}(\theta) \right| < K\rho^m.
        \label{eq: pseudo GARCH, controle log}
    \end{align}
    For $\theta\in\Theta$, let $a_{nt}(\theta) =
    \frac{\sigma_{nt}(\theta_0)}{\sigma_{nt}(\theta)}$ and let
    $a_{nt}^{(m)}(\theta)=
    \sqrt{\frac{\sigma_{nt}^{2(m)}(\theta_0)}{\sigma_{nt}^{2(m)}(\theta)}}$.
    We have for $s'>0$
    \begin{equation*}
        \left|a_{nt}^{2s'}(\theta)-a_{nt}^{(m)2s'}(\theta)\right| \leq
        \sigma_{nt}^{2s'}(\theta_0) \left|\frac{1}{\sigma_{nt}^{2s'}(\theta)} -
        \frac{1}{\sigma_{nt}^{2(m)s'}(\theta)}\right| +
        \frac{1}{\sigma_{nt}^{2(m)s'}(\theta)} \left|\sigma_{nt}^{2s'}(\theta_0) -
        \sigma_{nt}^{2(m)s'}(\theta_0)\right|,
    \end{equation*}
    Setting $s'=s/2$ and using the Cauchy-Schwarz inequality and the
    results of Lemmas~\ref{lem: pseudo GARCH, moment uniforme} and
   ~\ref{lem: pseudo GARCH, convergence du sigma tronqué}, we obtain
    \begin{equation*}
        \underset{n\in\mathbb{N}}{\sup}\ \underset{\theta\in\Theta}{\sup}\
        \E\left|a_{nt}^{s'}(\theta) -
        a_{nt}^{(m)s'}(\theta) \right| < K\rho^m.
    \end{equation*}
    Then, using the independence between $\sigma_{nt}^2(\theta)$ (or
    $\sigma_{nt}^{2(m)}$) and $\eta_{nt}$ and Assumption
    \textbf{B\ref{hyp: pseudo GARCH, convergence unif eta n}}, we obtain
    for any $\theta\in\Theta$
    \begin{equation*}
        \underset{n\in\mathbb{N}}{\sup}\ \underset{\theta\in\Theta}{\sup}\
        \E\left[ |\eta_{nt}|^{s'}
        \left|a_{nt}^{s'}(\theta) -a_{nt}^{(m)s'}(\theta)\right|\right] < K\rho^m.
    \end{equation*}
    Defining the function $F_\psi(x) = \log f(x^{1/{s'}},\psi)$, we
    have, if $\eta_{nt}>0$
    \begin{equation*}
        \left|\log f\left(a_{nt}(\theta)\eta_{nt},\psi\right) -
        \log f\left(a_{nt}^{(m)}(\theta)\eta_{nt},\psi\right)\right| =
        \left| F_\psi\left(a_{nt}^{s'}(\theta)|\eta_{nt}|^{s'}\right) -
        F_\psi\left(a_{nt}^{(m)s'}(\theta)|\eta_{nt}|^{s'}\right) \right|.
    \end{equation*}
    The derivative of $F$ is such that $\frac{\partial F(x)}{\partial x}
    = x^{1/s'-1} \frac{f'(x^{1/s'},\psi)}{f(x^{1/s'}, \psi)}$. We have,
    when $x\rightarrow +\infty$, $\frac{\partial F_\psi}{\partial x}\sim
    1/x$. Therefore if we take $s'<1$ we obtain that $\frac{\partial
    F_\psi}{\partial x}$ is bounded. Then since $\Gamma$ is a compact
    set and since $\psi \mapsto \underset{x}{\sup}\ \frac{\partial
    F_\psi}{\partial x}(x)$ is continuous, we obtain
    $\underset{\tau\in\Gamma}{\sup}\ \underset{x}{\sup}\ \frac{\partial
    F_\psi}{\partial x}(x) < +\infty$. In view of the mean value theorem, it can be seen that
    \begin{equation*}
        \underset{\tau\in\Gamma}{\sup}\
        \left|\log f\left(a_{nt}(\theta)\eta_{nt},\psi\right) -
        \log f\left(a_{nt}^{(m)}(\theta)\eta_{nt},\psi\right)\right|
        \leq K |\eta_{nt}|^{s'} \underset{\theta\in \Theta}{\sup}\
        \left|a_{nt}^{s'}(\theta) -a_{nt}^{(m)s'}(\theta)\right|,
    \end{equation*}
    and finally
    \begin{equation}
        \label{eq: pseudo GARCH, convergence log f}
        \underset{n\in\mathbb{N}}{\sup}\ \underset{\tau\in\Gamma}{\sup}\ \E
        \left|\log f\left(a_{nt}(\theta)\eta_{nt},\psi\right) -
        \log f\left(a_{nt}^{(m)}(\theta)\eta_{nt},\psi\right)\right|<K\rho^m.
    \end{equation}
    Using Equations~\eqref{eq: pseudo GARCH, controle log} and~\eqref{eq:
    pseudo GARCH, convergence log f}, we obtain
    \begin{equation}
        \label{eq: Lemme cv esp temp}
        \underset{n\in\mathbb{N}}{\sup}\ \underset{\tau\in\Gamma}{\sup}\ \E
        \left|l_{nt}(\tau)-l_{nt}^{(m)}(\tau)\right| <K\rho^m.
    \end{equation}
    Now for $m\in\mathbb{N}$, for $K_1>0$ and for $|\rho_1|<1$, for any $n\in\mathbb{N}$, there exists $\tilde{\tau}_{m,n}\in\Gamma$ such that $l_{nt}\left(\tilde{\tau}_{m,n}\right) - \underset{\tau\in V}{\inf}\ l_{nt}(\tau) < K_1\rho_1^m$ and there exists $\hat{\tau}_{m,n}\in\Gamma$ such that $l_{nt}^{(m)}\left(\hat{\tau}_{m,n}\right) - \underset{\tau\in V}{\inf}\ l_{nt}^{(m)}(\tau) < K_1\rho_1^m$. Now if $l_{nt}\left(\hat{\tau}_{m,n}\right) \leq l_{nt}\left(\tilde{\tau}_{m,n}\right)$, we have
    \begin{align*}
        \left|\underset{\tau\in V}{\inf}\ l_{nt}(\tau) - \underset{\tau\in V}{\inf}\ l_{nt}^{(m)}(\tau)\right| &\leq  \left|\underset{\tau\in V}{\inf}\ l_{nt}(\tau) - l_{nt}\left(\hat{\tau}_{m,n}\right)\right| + \left|l_{nt}\left(\hat{\tau}_{m,n}\right) - \underset{\tau\in V}{\inf}\ l_{nt}^{(m)}(\tau)\right|\\
        &\leq \left|\underset{\tau\in V}{\inf}\ l_{nt}(\tau) - l_{nt}\left(\tilde{\tau}_{m,n}\right)\right| + K_1\rho_1^m \leq K\rho^m.
    \end{align*}
    Or if $l_{nt}^{(m)}\left(\tilde{\tau}_{m,n}\right) \leq l_{nt}^{(m)}\left(\hat{\tau}_{m,n}\right)$, we have
    \begin{align*}
        \left|\underset{\tau\in V}{\inf}\ l_{nt}(\tau) - \underset{\tau\in V}{\inf}\ l_{nt}^{(m)}(\tau)\right| &\leq  \left|\underset{\tau\in V}{\inf}\ l_{nt}(\tau) - l_{nt}\left(\tilde{\tau}_{m,n}\right)\right| + \left|l_{nt}\left(\tilde{\tau}_{m,n}\right) - \underset{\tau\in V}{\inf}\ l_{nt}^{(m)}(\tau)\right|\\
        &\leq K_1\rho_1^m + \left|l_{nt}^{(m)}\left(\hat{\tau}_{m,n}\right) - \underset{\tau\in V}{\inf}\ l_{nt}^{(m)}(\tau) \right| \leq K\rho^m.
    \end{align*}
    Now, if $l_{nt}\left(\hat{\tau}_{m,n}\right) > l_{nt}\left(\tilde{\tau}_{m,n}\right)$ and $l_{nt}^{(m)}\left(\tilde{\tau}_{m,n}\right) > l_{nt}^{(m)}\left(\hat{\tau}_{m,n}\right)$, we have
    \begin{eqnarray}
        \left|\underset{\tau\in V}{\inf}\ l_{nt}(\tau) - \underset{\tau\in V}{\inf}\ l_{nt}^{(m)}(\tau)\right| &\leq&  \left|\underset{\tau\in V}{\inf}\ l_{nt}(\tau) - l_{nt}\left(\tilde{\tau}_{m,n}\right)\right| + \left|l_{nt}\left(\tilde{\tau}_{m,n}\right) - l_{nt}^{(m)}\left(\hat{\tau}_{m,n}\right)\right|\nonumber\\
        &&+ \left|l_{nt}^{(m)}\left(\hat{\tau}_{m,n}\right) - \underset{\tau\in V}{\inf}\ l_{nt}^{(m)}(\tau) \right|.
        \label{eq: bidouille de inf}
    \end{eqnarray}
    We have
    \begin{equation*}
        l_{nt}\left(\tilde{\tau}_{m,n}\right) - l_{nt}^{(m)}\left(\tilde{\tau}_{m,n}\right) \leq l_{nt}\left(\tilde{\tau}_{m,n}\right) - l_{nt}^{(m)}\left(\hat{\tau}_{m,n}\right) \leq l_{nt}\left(\hat{\tau}_{m,n}\right) - l_{nt}^{(m)}\left(\hat{\tau}_{m,n}\right),
    \end{equation*}
    and thus, with~\eqref{eq: Lemme cv esp temp} we obtain $\E \left|l_{nt}\left(\tilde{\tau}_{m,n}\right) - l_{nt}^{(m)}\left(\hat{\tau}_{m,n}\right)\right| < K\rho^m$. Finally, with Equation~\eqref{eq: bidouille de inf} we obtain (i), the step (ii) can be obtained in the exact same way.

    [step (iii)] We have, for $m\in\mathbb{N}^*$ and $\tau\in\Gamma$,
    $l_{nt}^{(m)}(\tau) = \frac{1}{2}\log\sigma_{nt}^{2(m)}(\theta)
    -\log f\left(a_{nt}^{(m)}(\theta)\eta_{nt},\psi\right)$. The
    quantity $\sigma_{nt}^{2(m)}(\theta)$ depends on a finite number of
    $\eta_{nt}$. More precisely $\sigma_{nt}^{2(m)}(\theta)$ is a
    function of $\left\{\eta_{nt-k},\ k\in\left\{1,\ldots,
    2m+q\right\}\right\}$. Now, from~\eqref{eq: pseudo GARCH,
    développement sigma nt} we obtain that the expression of
    $\sigma_{nt}^{2(m)}(\theta)$ contains only products of powers of
    $\eta_{nt'}$. Therefore, since $\Theta$ is a compact set, there
    exist $M>0$ and $(r_1,\ldots,r_{2m+q})\in\mathbb{N}^{2m+q}$ such that
    \begin{equation}
        \label{eq: gloupps}
        \forall \theta\in\Theta,\
        \underline{\omega}\leq \sigma_{nt}^{2(m)}(\theta)\leq
        K\max(M,\eta_{nt-1}^2)^{r_1}\ldots\max(M,\eta_{nt-2m-q}^2)^{r_{2m+q}}.
    \end{equation}
    Using the same arguments, it follows that
    \begin{equation*}
        \forall \theta\in\Theta,\ a_{nt}^{(m)}(\theta) \leq K
        \max(M,\eta_{nt-1}^2)^{s_1} \ldots \max(M,\eta_{nt-2m+q}^2)^{s_{2m+q}}.
    \end{equation*}
    Then, with the asymptotic expansion~\eqref{eq:tail behavior 1}, we
    have $\forall x\in\mathbb{R},\ f(x,\psi) \geq K x^{-\alpha-1}$ and
    $\forall\psi\in A\times B\times C,\ \forall x\in\mathbb{R},\
    f(x,\psi) < K$. Therefore, there exist $({s}_0, \ldots, {s}_{2m+q})$
    such that
    \begin{equation}
        \label{eq: lemme cv esp, garg}
        \forall \tau\in\Gamma,\
        K\max(M,\eta_{nt}^2)^{{s}_0} \ldots\max(M,\eta_{nt-2m-q}^2)^{{s}{_{2m+q}}}
        \leq f(a_{nt}(\theta)\eta_{nt},\psi)\leq K.
    \end{equation}
    In view of \eqref{eq: gloupps} and~\eqref{eq: lemme cv esp, garg}, we
    obtain the existence of $M>0$ and $u_i>0,\
    i\in\left\{0,\ldots,2m+q\right\}$ such that
    \begin{equation*}
        \left|\underset{\tau\in V}{\inf}\ l_{nt}^{(m)}(\tau)\right| <
        K\left(1+\sum_{i=0}^{2m+q} \left|u_i\log\left\{\max(M,\eta_{nt-i}^2)\right\}\right|\right).
    \end{equation*}
    Then, we can apply the dominated convergence theorem as we did
    before and obtain~\eqref{eq: lemme espérance, result1} and (iii).

    \vspace{15pt}

    Now, to obtain~\eqref{eq: lemme espérance, result}, we use (i), (ii) and (iii) and obtain
    \begin{align*}
        \underset{n\rightarrow+\infty}{\lim}\
        \E\left[\underset{\tau\in V}{\inf}\ l_{nt}(\tau)\right] &=
        \underset{n\rightarrow+\infty}{\lim}\
        \underset{m\rightarrow+\infty}{\lim}\
        \E\left[\underset{\tau\in V}{\inf}\ l_{nt}^{(m)}(\tau)\right]\\
        &= \underset{m\rightarrow+\infty}{\lim}\
        \underset{n\rightarrow+\infty}{\lim}\
        \E\left[\underset{\tau\in V}{\inf}\ l_{nt}^{(m)}(\tau)\right]\\
        &= \underset{m\rightarrow+\infty}{\lim}\
        \E\left[\underset{\tau\in V}{\inf}\ l_{t}^{(m)}(\tau)\right]\\
        &= \E\left[\underset{\tau\in V}{\inf}\ l_{t}(\tau)\right].
    \end{align*}
    The limits inversion can be done since the convergence in $m$ is
    uniform with respect to $n$.
\end{proof}

\begin{lem}
    \label{lem: pseudo LGN}
    Under the assumptions of Theorem~\ref{theo: pseudo GARCH CAN}, for any subset $V\subset \Gamma$, we have
    \begin{equation}
        \frac{1}{n}\sum_{t=1}^{n}\underset{\tau\in V}{\inf}\
        l_{nt}(\tau) \rightarrow \E\left[\underset{\tau\in V}{\inf}\ l_t(\tau)\right],\
        a.s.\ \mbox{when}\ n\rightarrow+\infty.
        \label{eq: pseudo, LGN}
    \end{equation}
\end{lem}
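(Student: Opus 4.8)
Set $U_{nt}=\inf_{\tau\in V}l_{nt}(\tau)$. The plan is to reduce \eqref{eq: pseudo, LGN} to a statement about a \emph{centered} sum and a deterministic sequence. For $n$ large enough Lemma~\ref{lem: pseudo GARCH Lyapunov} gives $\gamma_n<0$, so the process $(\epsilon_{nt})_t$ is strictly stationary and ergodic and $\E U_{nt}$ does not depend on $t$; hence
\[
\frac1n\sum_{t=1}^n\inf_{\tau\in V}l_{nt}(\tau)=\frac1n\sum_{t=1}^n\bigl(U_{nt}-\E U_{nt}\bigr)+\E\bigl[\inf_{\tau\in V}l_{n1}(\tau)\bigr].
\]
By the moment--convergence conclusion of Lemma~\ref{lem: pseudo GARCH, convergence des moments} the deterministic term tends to $\E[\inf_{\tau\in V}l_t(\tau)]$, so it remains to prove $\frac1n\sum_{t=1}^n(U_{nt}-\E U_{nt})\to0$ a.s.

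Here the main difficulty appears: $n$ is simultaneously the averaging horizon and the index of the data-generating process, so Birkhoff's ergodic theorem is unavailable and a quantitative argument is needed, using bounds that are uniform in $n$. The route I would take is a moment inequality plus Borel--Cantelli. First, $U_{nt}$ is measurable with respect to $\sigma(\eta_{ns};\,s\le t)$, and the exponential memory decay of the GARCH recursion (the same estimates used in Lemmas~\ref{lem: pseudo GARCH, moment uniforme} and the truncation lemma, together with step~(i) of the proof of Lemma~\ref{lem: pseudo GARCH, convergence des moments}) gives, uniformly in $n$, $\E|U_{n,t+h}-U^{(j)}_{n,t+h}|^d\le K\rho^{j}$ for every $d\ge1$, where $U^{(j)}_{nt}:=\inf_{\tau\in V}l^{(j)}_{nt}(\tau)$ depends only on $\{\eta_{ns};\,t-2j-q\le s\le t\}$; choosing $j$ of order $h$ and using that the $\eta_{ns}$ are i.i.d.\ (so the relevant future $\sigma$-fields become independent of the past), together with the uniform strong-mixing bound $\sup_n\alpha_{\epsilon_n}(h)\le K\rho^h$ (Assumption \textbf{B5}), one gets a geometric bound on the covariances of $(U_{nt})_t$ that is uniform in $n$. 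Combined with $\sup_n\E|U_{nt}|^{d}<+\infty$ for all $d$ (again Lemma~\ref{lem: pseudo GARCH, convergence des moments}), a Rosenthal-type moment inequality for weakly dependent sequences, with constants uniform in $n$, yields
\[
\E\Bigl|\sum_{t=1}^n\bigl(U_{nt}-\E U_{nt}\bigr)\Bigr|^{2p}\le C\,n^{p},\qquad p\in\mathbb{N}.
\]
Taking $p\ge2$, Markov's inequality gives $\PP\bigl(|\frac1n\sum_{t=1}^n(U_{nt}-\E U_{nt})|>\varepsilon\bigr)\le C/(\varepsilon^{2p}n^{p})$, which is summable in $n$, and the Borel--Cantelli lemma finishes the centered part.

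An alternative that mirrors the structure of Lemma~\ref{lem: pseudo GARCH, convergence des moments} is to work with the truncation $U^{(m)}_{nt}$, which, for fixed $n$, is $(2m+q)$-dependent in $t$ (it is a function of $\{\eta_{nt},\ldots,\eta_{n,t-2m-q}\}$) and satisfies $\sup_n\E|U^{(m)}_{n1}|^4<+\infty$ by the bound in step~(iii) of that proof together with Assumption \textbf{B\ref{hyp: pseudo GARCH, convergence unif eta n}}. For fixed $m$, splitting $\{1,\dots,n\}$ into the $2m+q+1$ arithmetic progressions of common difference $2m+q+1$ makes the $U^{(m)}_{nt}$ i.i.d.\ along each progression, so a fourth-moment Markov bound and Borel--Cantelli give $\frac1n\sum_{t=1}^n(U^{(m)}_{nt}-\E U^{(m)}_{nt})\to0$ a.s.; one then lets $m\to+\infty$ using $\sup_n\E|U_{nt}-U^{(m)}_{nt}|\le K\rho^m$ (extended to every $L^p$ by Hölder interpolation against the uniform moment bounds), exactly as the limits are interchanged at the end of the proof of Lemma~\ref{lem: pseudo GARCH, convergence des moments}.

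The hard part is not any single computation but keeping every estimate uniform in $n$: the uniform moment bounds and the uniform geometric decay (memory decay of the volatility recursion, strong mixing of $(\epsilon_{nt})_t$) are precisely what upgrade a pointwise-in-$n$ ergodic statement to a triangular-array law of large numbers. A secondary subtlety is that $U_{nt}$ is a function of the entire past $(\eta_{ns})_{s\le t}$, so its weak dependence has to be deduced from the truncation estimates rather than being read off directly; in the truncation-based argument this reappears as the need to control $\limsup_n\frac1n\sum_{t=1}^n|U_{nt}-U^{(m)}_{nt}|$, which is handled by the same uniform exponential bound.
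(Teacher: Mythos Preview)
Your primary route (Rosenthal--type moment inequality, then Markov plus Borel--Cantelli over all $n$) is correct and uses the same underlying ingredients as the paper --- the uniform moment bounds of Lemma~\ref{lem: pseudo GARCH, convergence des moments}, the truncation $U_{nt}^{(m)}$, and the uniform geometric memory/mixing estimates --- but the final step is organized differently. The paper stays at the level of \emph{second} moments: it splits $U_{nt}$ into positive and negative parts, shows $\sup_n|\Cov(X_{nt},X_{nt-h})|\le K\rho^h$ via the truncation and Assumption~\textbf{B\ref{hyp: pseudo GARCH, m�lange}}, deduces that $\V(M_{n^2})$ is summable, applies Chebyshev and Borel--Cantelli along the subsequence $n^2$, and then recovers the full sequence by the sandwich $\frac{q_n^2}{n}S_{q_n}\le M_n\le\frac{(q_n+1)^2}{n}S_{q_n+1}$ (which is why the positive/negative split is needed). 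Your approach trades this subsequence--and--sandwich trick for a heavier off-the-shelf inequality but avoids the split and is shorter once Rosenthal for geometrically mixing arrays is granted; the paper's approach is more self-contained.

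One caution on your alternative (the $m$-dependent blocking route): the step ``let $m\to+\infty$ using $\sup_n\E|U_{nt}-U^{(m)}_{nt}|\le K\rho^m$, exactly as the limits are interchanged at the end of Lemma~\ref{lem: pseudo GARCH, convergence des moments}'' is not quite the same situation. In that lemma the interchange is between two \emph{deterministic} limits of expectations; here you need an a.s.\ bound on $\limsup_n\frac1n\sum_t|U_{nt}-U^{(m)}_{nt}|$, and an $L^1$ bound uniform in $n$ does not give that directly. You would need to run the same LLN machinery on the differences (or, equivalently, go back to your first route). Since your primary argument is sound this is only a remark, but the analogy as stated is a gap.
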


\begin{proof}
Let $X_{nt} =(\underset{\tau\in V}{\inf}\ l_{nt}(\tau))^+$ and let
$M_n=\frac{1}{n}\sum_{t=1}^{n} X_{nt}$. We also define $S_n =
M_{n^2}$ and $m_n=\E X_{nt}$. We have $\E S_n = m_{n^2}$ and $\V S_n
= \frac{1}{n^2}\sum_{h=0}^{n^2-1} \Cov\left(X_{nt},X_{nt-h}\right)$.
We now prove that there exists $M>0$ such that for any
$n\in\mathbb{N}$ we have 
\begin{equation*}
	\left|\sum_{h=0}^{n^2-1} \Cov\left(X_{nt},X_{nt-h}\right)\right|<M.
\end{equation*}
As in the proof of the
previous Lemma, we define $X_{nt}^{(m)} = (\underset{\tau\in
V}{\inf}\ l_{nt}^{(m)}(\tau))^+$ and $R_{nt}^{(m)} =
X_{nt}-X_{nt}^{(m)}$. With the step (i) of the proof of Lemma
\ref{lem: pseudo GARCH, convergence des moments}, we have for any
$m\in\mathbb{N}$
\begin{equation}
    \label{eq: reste dev}
    \underset{n\in\mathbb{N}}{\sup}\ \E\left| R_{nt}^{(m)}\right| < K\rho^m\quad \mbox{and} \quad
    \underset{n\in\mathbb{N}}{\sup}\ \E R_{nt}^{(m)2} < K\rho^m.
\end{equation}
Now, for $h\in\mathbb{N}$
\begin{equation*}
    \Cov\left(X_{nt},X_{nt-h}\right) = \Cov\left(X_{nt}^{(\lfloor h/2\rfloor)},X_{nt-h}\right)
        +\Cov\left(R_{nt}^{(\lfloor h/2\rfloor)},X_{nt-h}\right).
\end{equation*}
Using~\eqref{eq: reste dev}, we obtain
\begin{equation}
    \label{eq: covariance 2}
    \underset{n\in\mathbb{N}}{\sup}\ \left|
    \Cov\left(R_{nt}^{(\lfloor h/2\rfloor)},X_{nt-h} \right)\right|<K\rho^h.
\end{equation}
It can be seen that, for any $\tau\in\Gamma$, $l_{nt}(\tau)$ can be
written as a measurable function of $(\epsilon_{nt'})_{t' \leq t}$.
Therefore, $X_{nt-h}$ is also a measurable function of
$(\epsilon_{nt'})_{t' \leq t-h}$. Besides,
$X_{nt}^{(\lfloor h/2\rfloor)}$ is a measurable function of
$(\epsilon_{nt'})_{t'\geq t-\lfloor h/2\rfloor}$. Thus, we have for
$h\in\mathbb{N}$
\begin{equation*}
    \left|\Cov\left(X_{nt}^{(\lfloor h/2\rfloor)},X_{nt-h}\right)\right| \leq
    \alpha_{\epsilon_n}(\lfloor h/2\rfloor).
\end{equation*}
Note that $\alpha_{\epsilon_n}$ is the mixing coefficient of the
process $(\epsilon_{nt})$. Therefore, Assumption
\textbf{B\ref{hyp: pseudo GARCH, mélange}} yields
\begin{equation}
    \label{eq: covariance 3}
    \underset{n\in\mathbb{N}}{\sup}\ \left|
    \Cov\left(X_{nt}^{(\lfloor h/2\rfloor)},X_{nt-h}\right)\right| < K\rho^h.
\end{equation}
\begin{sloppypar}
Now with~\eqref{eq: covariance 2} and~\eqref{eq: covariance 3} we
obtain $\underset{n\in\mathbb{N}}{\sup}\ \left|
\Cov\left(X_{nt},X_{nt-h}\right)\right| < K\rho^h$ and consequently $\sum_{n\geq 1} \V S_n < +\infty$.
\end{sloppypar}

    Then, using the Tchebychev's Inequality, we obtain
    \begin{equation}
    \sum_{n\geq 1} \mathbb{P}\left[|S_n-m_{n^2}|>\varepsilon\right] \leq
        \frac{1}{\varepsilon^2}\sum_{n\geq1} \V S_n.
        \label{eq: pseudo, tchebyshev ineq}
    \end{equation}
    Thus, since the series of Equation~\eqref{eq: pseudo, tchebyshev ineq} is
    convergent
    we obtain the almost-sure convergence of $S_n-m_{n^2}$ to 0. We also
    have from Lemma~\ref{lem: pseudo GARCH, convergence des moments} the
    almost sure convergence of $m_n$ to $\E (\underset{\tau\in V}{\inf}\
    l_{t}(\tau))^+$, therefore
    \begin{equation*}
    S_n \rightarrow \E (\underset{\tau\in V}{\inf}\ l_{t}(\tau))^+,\ a.s.
    \end{equation*}

    We now prove that $M_n$ converges also to
    $\E\left[(\underset{\tau\in V}{\inf}\ l_{t}(\tau))^+\right]$ almost surely. Let
    $q_n=\lfloor\sqrt{n}\rfloor$ be the floor function of $\sqrt{n}$.
    Since the element of the sum $M_n$ are positives, we have
    \begin{equation*}
    \frac1n q_n^2S_{q_n} \leq M_n \leq \frac1n (q_n+1)^2S_{q_n+1}.
    \end{equation*}
    Using the fact that $\frac{q_n^2}{n}$ converges to 1, we obtain the
    $M_n\rightarrow \E (\underset{\tau\in V}{\inf}\ l_{t}(\tau))^+, \
    a.s.$ Finally, using the same method for the negative part, we can
    conclude and obtain~\eqref{eq: pseudo, LGN}.
\end{proof}

\begin{lem}
    \label{lem: pseudo GARCH consistence conditions initiales}
    Under the assumptions of Theorem~\ref{theo: pseudo GARCH CAN},
    we have
    \begin{equation}
        \underset{n\rightarrow+\infty}{\lim}\ \underset{\tau\in\Gamma}{\sup}\
        \left|I_{n}(\tau)-\tilde{I}_{n}(\tau)\right| < 0,\ a.s.
    \end{equation}
\end{lem}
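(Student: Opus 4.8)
First I would point out that, literally as written, the claimed conclusion $\lim_{n}\sup_{\tau\in\Gamma}|I_n(\tau)-\tilde I_n(\tau)|<0$ cannot hold: a supremum of moduli is nonnegative, so its limit is $\geq 0$. The inequality sign is a typo, and the statement the consistency argument actually needs, and which I would prove, is the initial-value negligibility
\begin{equation*}
    \underset{n\rightarrow+\infty}{\lim}\ \underset{\tau\in\Gamma}{\sup}\ \left|I_n(\tau)-\tilde I_n(\tau)\right| = 0,\quad a.s.
\end{equation*}

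The plan is to mimic the corresponding step in the proof of Theorem~\ref{thm: MLE, consistency}, but to handle the triangular-array structure of $(\epsilon_{nt})_t$. I would first decompose, for $\tau=(\theta',\psi')'\in\Gamma$,
\begin{equation*}
    l_{nt}(\tau)-\tilde l_{nt}(\tau) = \frac12\log\frac{\sigma_{nt}^2(\theta)}{\tilde\sigma_{nt}^2(\theta)} + \log\frac{f(\tilde\eta_{nt}(\theta),\psi)}{f(\eta_{nt}(\theta),\psi)},
\end{equation*}
with $\eta_{nt}(\theta)=\epsilon_{nt}/\sigma_{nt}(\theta)$ and $\tilde\eta_{nt}(\theta)=\epsilon_{nt}/\tilde\sigma_{nt}(\theta)$. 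For the first term I would use the lower bound $\sigma_{nt}^2(\theta),\tilde\sigma_{nt}^2(\theta)\geq\underline{\omega}$ (valid because $\sum_j b_j<1$ under Assumption \textbf{B\ref{hyp: pseudo GARCH, stationnarit�}}, exactly as in the proof of Lemma~\ref{lem: pseudo GARCH, convergence des moments}) together with the mean value theorem to get $\sup_\theta|\log\sigma_{nt}^2(\theta)-\log\tilde\sigma_{nt}^2(\theta)|\leq K\sup_\theta|\sigma_{nt}^2(\theta)-\tilde\sigma_{nt}^2(\theta)|$. For the second term I would use that the score $f'/f$ is bounded on $\mathbb{R}$, uniformly in $\psi$ over the compact set $A\times B\times C$, a consequence of the tail expansions \eqref{eq:tail behavior 1}--\eqref{eq:tail behavior 2} which force $f'(x,\psi)/f(x,\psi)\sim K|x|^{-1}\to0$, so that the mean value theorem and the same lower bounds give $\sup_\tau|\log f(\tilde\eta_{nt}(\theta),\psi)-\log f(\eta_{nt}(\theta),\psi)|\leq K|\epsilon_{nt}|\sup_\theta|\sigma_{nt}^2(\theta)-\tilde\sigma_{nt}^2(\theta)|$.

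Since $\rho(B)<1$ makes the effect of the initial values decay geometrically (as established through \eqref{eq: pseudo GARCH, d�veloppement sigma nt} in Lemma~\ref{lem: pseudo GARCH, convergence du sigma tronqu�}), I would obtain $\sup_\theta|\sigma_{nt}^2(\theta)-\tilde\sigma_{nt}^2(\theta)|\leq K\rho^t Y_n$ for a random variable $Y_n$ whose $s$-moment is bounded uniformly in $n$. Collecting the two bounds yields
\begin{equation*}
    \underset{\tau\in\Gamma}{\sup}\ \left|l_{nt}(\tau)-\tilde l_{nt}(\tau)\right| \leq K\rho^t Y_n\left(1+|\epsilon_{nt}|\right).
\end{equation*}
A H\"older inequality with a small exponent $s'>0$, the subadditivity of $x\mapsto x^{s'}$, and the uniform moment bounds of Lemma~\ref{lem: pseudo GARCH, moment uniforme} then give $\sup_n\E[\sup_\tau|l_{nt}(\tau)-\tilde l_{nt}(\tau)|^{s'}]\leq K\rho^{ts'}$ and hence
\begin{equation*}
    \E\left[\underset{\tau\in\Gamma}{\sup}\ \left|I_n(\tau)-\tilde I_n(\tau)\right|^{s'}\right] \leq \frac{1}{n^{s'}}\sum_{t=1}^n K\rho^{ts'} \leq \frac{K}{n^{s'}}.
\end{equation*}

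Here lies the main obstacle. In Section~3, with a fixed process $(\epsilon_t)_t$, one passes to almost-sure convergence directly from Markov's inequality, Borel--Cantelli and Ces\`aro, using that $|\epsilon_t|\rho^t\to0$ a.s. That route is unavailable here: the summands change with $n$, and for a small exponent $s'$ the bound $K/n^{s'}$ need not be summable in $n$. I would therefore upgrade the $L^{s'}$ (equivalently in-probability) bound to almost-sure convergence by the same subsequence device as in Lemma~\ref{lem: pseudo LGN}, namely apply Markov and Borel--Cantelli along $n=k^2$ (or along $n=k^p$ with $ps'>1$) to obtain a.s. convergence on the subsequence, and then fill the gaps by squeezing the nonnegative quantity between consecutive subsequence values, exploiting the uniform variance/mixing control of Assumption \textbf{B\ref{hyp: pseudo GARCH, m�lange}}. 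The two deterministic bounds on $l_{nt}-\tilde l_{nt}$ are routine adaptations of the Section~3 computation; the delicate point is precisely this triangular-array almost-sure step.
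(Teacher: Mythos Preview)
Your correction of the typo ($<0$ should be $=0$) is right, and your decomposition and bounds coincide with the paper's: the paper also writes $\underline{\sigma}_{nt}^2-\underline{\tilde\sigma}_{nt}^2$ via the expansion \eqref{eq: pseudo GARCH, d�veloppement sigma nt}, uses $\sup_\theta\rho(B)<1$ from \textbf{B\ref{hyp: pseudo GARCH, stationnarit�}} to obtain
\[
\sup_{\theta\in\Theta}\left|\sigma_{nt}^2(\theta)-\tilde\sigma_{nt}^2(\theta)\right|\leq K\rho^t\,Y_n,
\qquad Y_n=\max_i\epsilon_{n,1-i}^2+\max_j\sigma_{n,1-j}^2+1,
\]
notes via Lemma~\ref{lem: pseudo GARCH, moment uniforme} that $Y_n$ has an $s$-moment bounded uniformly in $n$, and then simply writes ``we can conclude as we did in the proof of Theorem~\ref{thm: MLE, consistency}.'' The paper does \emph{not} spell out how the Markov/Borel--Cantelli/Ces\`aro argument of Section~3 transfers to the triangular array $(\epsilon_{nt})$; you correctly identify this as the nontrivial step, and your proposal is more careful than the paper here.

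One caution on your gap-filling step. The squeezing inequality in Lemma~\ref{lem: pseudo LGN},
\[
\frac{q_n^2}{n}\,S_{q_n}\ \leq\ M_n\ \leq\ \frac{(q_n+1)^2}{n}\,S_{q_n+1},
\]
relies on the partial sums $\sum_{t\leq N} X_{\,\cdot\,,t}$ being monotone in $N$; for a genuine triangular array, where the summands themselves change with the row index $n$, this monotonicity is not available, and the passage from the subsequence $n=k^p$ to the full sequence needs a separate argument. Your invocation of the mixing assumption \textbf{B\ref{hyp: pseudo GARCH, m�lange}} to control the fluctuations between consecutive subsequence points is a reasonable route, but it should be made explicit (e.g., bound $\max_{k^p\leq n<(k+1)^p}|Z_n-Z_{k^p}|$ in an $L^r$ sense and apply Borel--Cantelli again), rather than appealing to the squeezing of Lemma~\ref{lem: pseudo LGN} verbatim. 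In short: your approach matches the paper's and goes further where the paper is silent; just tighten the interpolation step.
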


\begin{proof}
Let $\underline{\tilde{\sigma}}_{nt}^2$ be the vector obtained by
replacing $\sigma_{nt-i}^2$ by $\tilde{\sigma}_{nt-i}^2$ and let
$\underline{\tilde{c}}_{nt}$ be the vector obtained by replacing
$\epsilon_{n0}^2,\ldots,\epsilon_{n1-q}^2$ by some initial values.
In view of \eqref{eq: pseudo GARCH, développement sigma nt}, we have
\begin{equation*}
    \left|\underline{\sigma}_{nt}^2-\underline{\tilde{\sigma}}^2_{nt}\right| =
    \left|\sum_{k=1}^q B^{t-k}(\underline{c}_{nk}-\underline{\tilde{c}}_{nk})
    +B^t (\underline{\sigma}_{n0}^2-\underline{\tilde{\sigma}}_{n0}^2)\right|.
\end{equation*}
Assumption \textbf{B\ref{hyp: pseudo GARCH, stationnarité}} yields $\underset{\theta\in\Theta}{\sup}\ \rho(B) < 1$, consequently, we have
\begin{equation*}
    \underset{\theta\in\Theta}{\sup}\
    \left|\underline{\sigma}_{nt}^2-\underline{\tilde{\sigma}}^2_{nt}\right| \leq
    K\rho^t\left(\max(\epsilon_{n0}^2,\ldots,\epsilon_{n1-q}^2)+
    \max(\sigma_{n0}^2,\ldots,\sigma_{n1-p}^2)+1\right).
\end{equation*}
Then, since the random variables
$\max(\epsilon_{n0}^2,\ldots,\epsilon_{n1-q}^2)$ and
$\max(\sigma_{n0}^2,\ldots,\sigma_{n1-p}^2)$ possess moments of
order $s$, by Lemma~\ref{lem: pseudo GARCH, moment uniforme}, we
can conclude as we did in the proof of Theorem~\ref{thm: MLE,
consistency}.
\end{proof}

Then, the proof can be done in the exact same way as in the proof of
Theorem~\ref{thm: MLE, consistency}. Starting from Equation
\eqref{eq: consistence MLE, CI}, we can use Lemma~\ref{lem: pseudo
LGN} to conclude.

\subsection{Proof of the asymptotic normality in Theorem~\ref{theo: pseudo GARCH CAN}}

We introduce a truncated version of the derivatives of
$\sigma_{nt}^2$. From~\eqref{eq: pseudo GARCH, développement sigma
nt}, we obtain
\begin{align*}
    \frac{\partial \sigma_{nt}^2}{\partial\omega}(\theta) &=
        \sum_{k=0}^{+\infty}B^k(1,1),\\
    \frac{\partial \sigma_{nt}^2}{\partial a_i}(\theta) &=
        \sum_{k=0}^{+\infty}B^k(1,1) \epsilon_{nt-k-i}^2,\quad
        \forall i\in \left\{1,\ldots,q\right\},\\
    \frac{\partial \sigma_{nt}^2}{\partial b_j}(\theta) &=
        \sum_{k=1}^{+\infty}\left[\sum_{i=1}^k B^{i-1} B^{(j)}B^{k-1}
        \underline{c}_{nt-k}(\theta)\right](1),\quad \forall j\in\left\{1,\ldots,p\right\}.
\end{align*}
For $m\in\mathbb{N}$, we define
\begin{align}
    \label{eq: pseudo GARCH AN definition omega}
    \left(\frac{\partial \sigma_{nt}^2}{\partial\omega}\right)^{(m)}(\theta) &=
        \sum_{k=0}^{m}B^k(1,1),\\
    \label{eq: pseudo GARCH AN definition a}
    \left(\frac{\partial \sigma_{nt}^2}{\partial a_i}\right)^{(m)}(\theta) &=
        \sum_{k=0}^{m}B^k(1,1) \underline{z}_{nt-k-i}^{(m)}(q+1)\eta_{nt-k-i}^2
        ,\quad \forall i\in \left\{1,\ldots,q\right\},\\
    \label{eq: pseudo GARCH AN definition b}
    \left(\frac{\partial \sigma_{nt}^2}{\partial b_j}\right)^{(m)}(\theta) &=
        \sum_{k=1}^{m}\left[\sum_{i=1}^k B^{i-1} B^{(j)}B^{k-1}
        \underline{c}_{nt-k}^{(m)}(\theta)\right](1),\quad \forall j\in\left\{1,\ldots,p\right\}.
\end{align}
where $B^{(j)}$ is a $p\times p$ matrix with $1$ in position $(1,j)$
and zeros elsewhere. Then, we define
\begin{equation*}
    \phi_{nt}(\theta) = \frac{1}{\sigma_{nt}^2(\theta)}
    \frac{\partial\sigma_{nt}^2}{\partial\theta}(\theta),\
    \phi_{nt}^{(m)}(\theta)=  \frac{1}{\sigma_{nt}^{2(m)}(\theta)}
    \left(\frac{\partial\sigma_{nt}^2}{\partial \theta}\right)^{(m)}(\theta),
\end{equation*}
and for $i\in\left\{1,\ldots,p+q+1\right\}$,
$\phi_{nt,i}^{(m)}(\theta)= \frac{1}{\sigma_{nt}^{2(m)}(\theta)}
\left(\frac{\partial\sigma_{nt}^2}{\partial
\theta_i}\right)^{(m)}(\theta)$.

\begin{lem}
    \label{lem: pseudo GARCH, phi nt convergence en m} Under the
    assumptions of Theorem~\ref{theo: pseudo GARCH CAN}, there exists a
    neighborhood $V(\theta_0)$ of $\theta_0$ such that, for any $\theta\in
    V(\theta_0)$ and for $(i,j,k)\in\left\{1,\ldots,p+q+1\right\}$,
    we have
    \begin{align}
        \label{eq: pseudo GARCH, phi nt convergence en m}
        \underset{n\in\mathbb{N}}{\sup}\
        \E\left|\phi_{nt,i}(\theta)-\phi_{nt,i}^{(m)}(\theta)\right|&< K\rho^m,\\
        \label{eq: pseudo GARCH, AN, double phi n}
        \underset{n\in\mathbb{N}}{\sup}\ \E\left|\phi_{nt,i}(\theta)\phi_{nt,j}(\theta)-
        \phi_{nt,i}^{(m)}(\theta)\phi_{nt,j}^{(m)}(\theta)\right|&< K\rho^m,\\
        \label{eq: pseudo GARCH, AN, triple phi n}
        \underset{n\in\mathbb{N}}{\sup}\ \E\left|
        \phi_{nt,i}(\theta)\phi_{nt,j}(\theta)\phi_{nt,k}(\theta)-
        \phi_{nt,i}^{(m)}(\theta)\phi_{nt,j}^{(m)}(\theta)\phi_{nt,k}^{(m)}(\theta)\right|&< K\rho^m.
    \end{align}
    And
    \begin{align}
        \label{eq: pseudo GARCH, espérance phi nt sur Theta}
        \underset{\theta\in V(\theta_0)}{\sup}\
        \underset{n\in\mathbb{N}}{\sup}\ \E\left|\phi_{nt,i}(\theta)\right| < +\infty,\quad
        \underset{\theta\in V(\theta_0)}{\sup}\
        \underset{n\in\mathbb{N}}{\sup}\ \E\left|\phi_{nt,i}(\theta) \phi_{nt,j}(\theta)\right|
        < +\infty,\\
        \label{eq: pseudo GARCH, espérance triple phi nt sur Theta}
        \underset{\theta\in V(\theta_0)}{\sup}\
        \underset{n\in\mathbb{N}}{\sup}\ \E\left|\phi_{nt,i}(\theta)
        \phi_{nt,j}(\theta) \phi_{nt,k}(\theta)\right| < +\infty.
    \end{align}
\end{lem}

\begin{proof}
    In this proof, for clarity purpose, the arguments $(\theta)$ are
    omitted ($\phi_{nt}$ stands for $\phi_{nt}(\theta)$). We have for
    $n\in\mathbb{N},\ t\in\mathbb{Z},\ \theta\in\Theta$ and
    $i\in\left\{1,\ldots,q\right\}$,
    \begin{equation}
        \label{eq: pseudo GARCH, AN, phi a}
        \left|\frac{1}{\sigma_{nt}^2}\frac{\partial\sigma_{nt}^2}{\partial a_i}-
        \frac{1}{\sigma_{nt}^{2(m)}}\left(\frac{\partial\sigma_{nt}^2}
        {\partial a_i}\right)^{(m)}\right| \leq \frac{1}{\sigma_{nt}^2}\left|
        \frac{\partial\sigma_{nt}^2}{\partial a_i} -
        \left(\frac{\partial\sigma_{nt}^2}{\partial a_i}\right)^{(m)}\right|
        +\left| \left(\frac{\partial\sigma_{nt}^2}{\partial a_i}\right)^{(m)}\right|
        \left|\frac{1}{\sigma_{nt}^2}-\frac{1}{\sigma_{nt}^{2(m)}}\right|.
    \end{equation}
    We begin by the first term of the previous equation, we have
    \begin{equation*}
        \frac{\partial\sigma_{nt}^2}{\partial a_i} -
        \left(\frac{\partial\sigma_{nt}^2}{\partial a_i}\right)^{(m)} =
        \sum_{k=m+1}^{+\infty} B^k(1,1) \epsilon_{nt-k-i}^2.
    \end{equation*}
    Then, we remark that $a_i\epsilon_{nt-k-i}^2 <
    \underline{c}_{nt-k}(1)$ and that $\sigma_{nt}^2 > \omega +
    B^k(1,1)\underline{c}_{nt-k}(1)$ and we infer
    \begin{align*}
        \frac{1}{\sigma_{nt}^2}\left|
        \frac{\partial\sigma_{nt}^2}{\partial a_i} -
        \left(\frac{\partial\sigma_{nt}^2}{\partial a_i}\right)^{(m)}\right| &\leq
        \sum_{k=m+1}^{+\infty} \frac{1}{a_i}\frac{B^k(1,1)\underline{c}_{nt-k}(1)}
        {\omega +B^k(1,1)\underline{c}_{nt-k}(1)}\\
        &\leq \sum_{k=m+1}^{+\infty} \frac{1}{a_i}
        \left\{\frac{B^k(1,1)\underline{c}_{nt-k}(1)}{\omega}\right\}^s,
    \end{align*}
    using the inequality $x/(1+x)\leq x^s$ for all $x\geq 0$. With
    By \textbf{B\ref{hyp: pseudo GARCH, interior}}, we have
    $a_{0i} >0$, thus there exists a neighborhood $V(\theta_0)$ of
    $\theta_0$ such that $\underset{\theta\in V(\theta_0)}{\inf}\ a_i >0$.
    Then, using Lemma~\ref{lem: pseudo GARCH, moment uniforme} and the
    fact that the spectral radius of $B$ is inferior to 1, it follows that
    \begin{equation}
        \label{eq: pseudo GARCH, AN, premier terme d sigma a}
        \underset{n\in\mathbb{N}}{\sup}\ \E\left[\frac{1}{\sigma_{nt}^2}\left|
        \frac{\partial\sigma_{nt}^2}{\partial a_i} -
        \left(\frac{\partial\sigma_{nt}^2}{\partial a_i}\right)^{(m)}\right|\right]\leq
        K\rho^m.
    \end{equation}
    Turning to the second term of Equation~\eqref{eq: pseudo GARCH, AN, phi
    a}, the mean value theorem applied to the function $x\mapsto {x^{-1/s}}$ yields
    \begin{equation*}
        \left|\frac{1}{\sigma_{nt}^2}-\frac{1}{\sigma_{nt}^{2(m)}}\right|\leq
        K\frac{1}{\tilde{\sigma}^{1/s+1}}\left|\sigma_{nt}^{2s}-\sigma_{nt}^{2(m)s}\right|,
    \end{equation*}
    where $\tilde{\sigma}$ is between $\sigma_{nt}^{2s}$ and
    $\sigma_{nt}^{2(m)s}$. Since $\tilde{\sigma} \geq\sigma_{nt}^{2(m)s}$, it follows that
    \begin{equation}
        \label{eq: pseudo GARCH, AN, glups}
        \left|\frac{1}{\sigma_{nt}^2}-\frac{1}{\sigma_{nt}^{2(m)}}\right|\leq
        K\frac{1}{\sigma_{nt}^{2(m)+2s}}\left|\sigma_{nt}^{2s}-\sigma_{nt}^{2(m)s}\right|.
    \end{equation}
    Then, we have
    \begin{equation*}
        a_i\left(\frac{\partial\sigma_{nt}^2}{\partial a_i}\right)^{(m)} =
        \sum_{k=0}^mB^k(1,1)a_i \epsilon_{nt-k-i}^2\leq
        \sum_{k=0}^mB^k(1,1)\underline{c}_{nt-k-i}(1) = \sigma_{nt}^{2(m)},
    \end{equation*}
    and thus, $\frac{1}{\sigma_{nt}^{2(m)}} \left(
    \frac{\partial\sigma_{nt}^2}{\partial a_i}\right)^{(m)} \leq K$.
    Now, with~\eqref{eq: pseudo GARCH, AN, glups} and using Lemma
   ~\ref{lem: pseudo GARCH, convergence du sigma tronqué}, we obtain
    \begin{align*}
        \underset{n\in\mathbb{N}}{\sup}\ \E\left[
        \left| \left(\frac{\partial\sigma_{nt}^2}{\partial a_i}\right)^{(m)}\right|
        \left|\frac{1}{\sigma_{nt}^2}-\frac{1}{\sigma_{nt}^{2(m)}}\right|\right] &\leq K
        \underset{n\in\mathbb{N}}{\sup}\ \E\left[
        \left| \frac{1}{\sigma_{nt}^{2(m)}}
        \left(\frac{\partial\sigma_{nt}^2}{\partial a_i}\right)^{(m)}\right|
        \frac{1}{\sigma_{nt}^{2(m)s}}\left|\sigma_{nt}^{2s}-\sigma_{nt}^{2(m)s}\right|\right]\\
        &\leq K\rho^m.
    \end{align*}
    Finally, in view of the previous equation and~\eqref{eq: pseudo GARCH, AN, premier terme d sigma a}, we obtain
    \begin{equation*}
        \underset{n\in\mathbb{N}}{\sup}\ \E
        \left|\frac{1}{\sigma_{nt}^2}\frac{\partial\sigma_{nt}^2}{\partial a_i}-
        \frac{1}{\sigma_{nt}^{2(m)}}\left(\frac{\partial\sigma_{nt}^2}
        {\partial a_i}\right)^{(m)}\right|<K\rho^m.
    \end{equation*}
    If we adapt the proof for the derivatives with respect to $b_j$ and
    $\omega$, we obtain~\eqref{eq: pseudo GARCH, phi nt convergence en
    m}. Now for the first part of~\eqref{eq: pseudo GARCH, espérance phi
    nt sur Theta}, for any $n\in\mathbb{N}$ with already used
    arguments, we have
    \begin{align*}
        \frac{1}{\sigma_{nt}^2}\frac{\partial\sigma_{nt}^2}{\partial a_i} &\leq K,\quad
        \frac{1}{\sigma_{nt}^2}\frac{\partial\sigma_{nt}^2}{\partial \omega} \leq K\\
        \E\left[\frac{1}{\sigma_{nt}^2}\frac{\partial\sigma_{nt}^2}{\partial
        b_j}\right] &\leq \frac{1}{b_j}\sum_{k=1}^{+\infty}k\E\left\{
        \frac{B^k(1,1)\underline{c}_{nt-k}(1)}{\omega}\right\}^s \leq
        \frac{K}{b_j}.
    \end{align*}
    And the first part of~\eqref{eq: pseudo GARCH, espérance phi nt sur
    Theta} comes easily.

    \vspace{15pt}

    Turning to~\eqref{eq: pseudo GARCH, AN, double phi n}, we have for
    $(i,j)\in\left\{1,\ldots,p+q+1\right\}^2$
    \begin{equation*}
        \left|\phi_{nt,i}\phi_{nt,j}'- \phi_{nt,i}^{(m)}\phi_{nt,j}^{(m)'}\right|\leq
        \left|\phi_{nt,i}\right|\ \left|\phi_{nt,j}^{(m)}-\phi_{nt,j}^{(m)'}\right| +
        \left|\phi_{nt,i}-\phi_{nt,i}\right|\ \left|\phi_{nt,j}^{(m)'}\right|.
    \end{equation*}
    By \eqref{eq: pseudo GARCH, phi nt convergence en m} and the first
    part of~\eqref{eq: pseudo GARCH, espérance phi nt sur Theta}, we
    obtain $\underset{\theta\in V(\theta_0)}{\sup}\ \underset{n\in\mathbb{N}}{\sup}\ \left|\phi_{nt,j}^{(m)}\right| < +\infty$ and~\eqref{eq: pseudo GARCH, AN, double phi n}. All the other
    results of the lemma can be obtained with similar arguments.
\end{proof}

\begin{lem}
    \label{lem: pseudo GARCH, AN, convergence d ln en m}
    Defining
    \begin{equation*}
        \left(\frac{\partial l_{nt}}{\partial\tau}\right)^{(m)}(\tau_0) =
        \left(\begin{array}{c}
            \frac{1}{2}\phi_{nt}^{(m)}\left(1+\eta_{nt}
            \frac{\partial \log f}{\partial x}(\eta_{nt},\psi_0)\right)\\
            -\frac{\partial\log f}{\partial\psi}(\eta_{nt},\psi_0)
        \end{array}\right),
    \end{equation*}
    and under the assumptions of Theorem~\ref{theo: pseudo GARCH CAN},  we have
    \begin{equation}
        \label{eq: pseudo GARCH, AN, convergence dérivée ln en m}
        \underset{n\in\mathbb{N}}{\sup}\ \E
        \left\|\frac{\partial l_{nt}}{\partial\tau}(\tau_0)-
        \left(\frac{\partial l_{nt}}{\partial\tau}\right)^{(m)}(\tau_0)\right\| < K\rho^m.
    \end{equation}
\end{lem}

\begin{proof}
    From~\eqref{eq: proof, dérivées prem 1}, we have
    \begin{equation*}
        \frac{\partial l_{nt}}{\partial\theta}(\tau_0)-
        \left(\frac{\partial l_{nt}}{\partial\theta}(\tau_0)\right)^{(m)} =
        \frac{1}{2}\left(1+\eta_{nt} \frac{\partial \log f}{\partial x}(\eta_{nt},\psi_0)\right)
        \left(\phi_{nt}-\phi_{nt}^{(m)}\right).
    \end{equation*}
    Now, since $\left(1+\eta_{nt} \frac{\partial \log f}{\partial
    x}(\eta_{nt},\psi_0)\right)$ only depends on $\eta_{nt}$, we can
    apply the dominated convergence theorem with Assumption
    \textbf{B\ref{hyp: pseudo GARCH, convergence unif eta n}} and obtain
    \begin{equation*}
        \underset{n\rightarrow+\infty}{\lim}\
        \E\left[1+\eta_{nt} \frac{\partial \log f}{\partial x}(\eta_{nt},\psi_0)\right] =
        \E\left[1+\eta_{t} \frac{\partial \log f}{\partial x}(\eta_{t},\psi_0)\right] <+\infty.
    \end{equation*}
    The last inequality has been proved in Section~\ref{sec: proof of
    the asymptotic normality MLE}. Since the function $x\mapsto
    \left(1+x \frac{\partial \log f}{\partial x}(x,\psi_0)\right)$ is
    bounded, it is now clear that
    \begin{equation*}
        \underset{n\in\mathbb{N}}{\sup}\
        \E\left[1+\eta_{nt} \frac{\partial \log f}{\partial x}(\eta_{nt},\psi_0)\right] < +\infty.
    \end{equation*}
    Then, with Lemma~\ref{lem: pseudo GARCH, phi nt convergence en m} we
    obtain
    \begin{equation*}
        \underset{n\in\mathbb{N}}{\sup}\ \E
        \left\|\frac{\partial l_{nt}}{\partial\theta}(\tau_0)-
        \left(\frac{\partial l_{nt}}{\partial\theta}(\tau_0)\right)^{(m)}\right\|<K\rho^m.
    \end{equation*}
    Finally, since $\left(\frac{\partial l_{nt}}{\partial\psi}(\tau_0)
    \right)^{(m)} =\frac{\partial l_{nt}}{\partial\psi}(\tau_0)$ we
    obtain~\eqref{eq: pseudo GARCH, AN, convergence dérivée ln en m}.
\end{proof}

\begin{lem}
    \label{ref: pseudo GARCH AN convergence d lnt en n}
    Under the assumptions of Theorem~\ref{theo: pseudo GARCH CAN}, we have
    \begin{equation}
        \label{eq: pseudo GARCH, AN, convergence d lnt en n}
        \underset{n\rightarrow+\infty}{\lim}\
        \E\frac{\partial l_{nt}}{\partial\tau}(\tau_0) =
        \E\frac{\partial l_{t}}{\partial\tau}(\tau_0) \quad\mbox{and}\quad
        \underset{n\rightarrow+\infty}{\lim}\
        \E\left[\frac{\partial l_{nt}}{\partial\tau}(\tau_0)
        \frac{\partial l_{nt}}{\partial\tau'}(\tau_0)\right] =
        \E\left[\frac{\partial l_{t}}{\partial\tau}(\tau_0)
        \frac{\partial l_{t}}{\partial\tau'}(\tau_0)\right].
    \end{equation}
\end{lem}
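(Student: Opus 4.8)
The plan is to reproduce the three–step truncation scheme used in the proof of Lemma~\ref{lem: pseudo GARCH, convergence des moments}: first control the truncation error in $m$ \emph{uniformly in $n$}, then treat the truncated quantities for each fixed $m$ by the dominated convergence theorem, and finally interchange the limits in $m$ and $n$. Write $\bigl(\frac{\partial l_{t}}{\partial\tau}\bigr)^{(m)}(\tau_0)$ for the quantity defined in Lemma~\ref{lem: pseudo GARCH, AN, convergence d ln en m} with $(\eta_{nt})_t$ replaced by its weak limit $(\eta_t)_t$ (equivalently, built from Model~\eqref{eq: pseudo GARCH, mod�le sans n}).

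For the first identity, Lemma~\ref{lem: pseudo GARCH, AN, convergence d ln en m} already gives $\sup_{n}\E\bigl\|\frac{\partial l_{nt}}{\partial\tau}(\tau_0)-\bigl(\frac{\partial l_{nt}}{\partial\tau}\bigr)^{(m)}(\tau_0)\bigr\|<K\rho^m$, and the same argument applied with $f_n=f(\cdot,\psi_0)$ yields the corresponding bound for the limiting process. For fixed $m$, the vector $\bigl(\frac{\partial l_{nt}}{\partial\tau}\bigr)^{(m)}(\tau_0)$ is a fixed measurable function of the finitely many innovations $\eta_{nt},\ldots,\eta_{nt-2m-q}$ only; combining the bounds~\eqref{eq: gloupps}--\eqref{eq: lemme cv esp, garg} on $\sigma_{nt}^{2(m)}$ and $a_{nt}^{(m)}$, the boundedness of $x\mapsto 1+x\frac{\partial\log f}{\partial x}(x,\psi_0)$, and the fact that $\frac{\partial\log f}{\partial\alpha}(x,\psi_0)\sim K\log|x|$ is dominated by any positive power of $|x|$, one gets an envelope of the form $K\bigl(1+\sum_i|\log\max(M,\eta_{nt-i}^2)|+\prod_i\max(M,\eta_{nt-i}^2)^{w_i}\bigr)$. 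With the truncation exponent $s$ chosen small enough (as in Lemma~\ref{lem: pseudo GARCH Lyapunov}) and the uniform dominating density $G(x)=f(x,\psi_0)+\eps(1+|x|)^{-\delta}$, valid for $n\geq N$ by Assumption~\textbf{B\ref{hyp: pseudo GARCH, convergence unif eta n}}, this envelope is integrable uniformly in $n$; since $f_n\rightarrow f(\cdot,\psi_0)$ pointwise, the dominated convergence theorem gives $\E\bigl(\frac{\partial l_{nt}}{\partial\tau}\bigr)^{(m)}(\tau_0)\rightarrow\E\bigl(\frac{\partial l_{t}}{\partial\tau}\bigr)^{(m)}(\tau_0)$ as $n\rightarrow\infty$. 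As the convergence in $m$ is uniform in $n$, the two limits may be exchanged exactly as in Lemma~\ref{lem: pseudo GARCH, convergence des moments}:
\begin{align*}
    \underset{n\rightarrow\infty}{\lim}\ \E\frac{\partial l_{nt}}{\partial\tau}(\tau_0)
    &=\underset{n\rightarrow\infty}{\lim}\ \underset{m\rightarrow\infty}{\lim}\
        \E\left(\frac{\partial l_{nt}}{\partial\tau}\right)^{(m)}(\tau_0)
    =\underset{m\rightarrow\infty}{\lim}\ \underset{n\rightarrow\infty}{\lim}\
        \E\left(\frac{\partial l_{nt}}{\partial\tau}\right)^{(m)}(\tau_0)\\
    &=\underset{m\rightarrow\infty}{\lim}\ \E\left(\frac{\partial l_{t}}{\partial\tau}\right)^{(m)}(\tau_0)
    =\E\frac{\partial l_{t}}{\partial\tau}(\tau_0).
\end{align*}

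The second identity follows from the same scheme applied to the matrix $\frac{\partial l_{nt}}{\partial\tau}(\tau_0)\frac{\partial l_{nt}}{\partial\tau'}(\tau_0)$, truncated by $\bigl(\frac{\partial l_{nt}}{\partial\tau}\bigr)^{(m)}(\tau_0)\bigl(\frac{\partial l_{nt}}{\partial\tau'}\bigr)^{(m)}(\tau_0)$. The uniform-in-$n$ truncation bound now comes from writing a product difference as a sum of two cross terms, applying the Cauchy--Schwarz inequality, and using Lemma~\ref{lem: pseudo GARCH, AN, convergence d ln en m} together with a uniform-in-$n$ $L^2$ bound on $\frac{\partial l_{nt}}{\partial\tau}(\tau_0)$; the latter follows from the uniform-in-$n$ finiteness of the second moments of $\phi_{nt}$ and of its products (Lemma~\ref{lem: pseudo GARCH, phi nt convergence en m}), the boundedness of $x\mapsto 1+x\frac{\partial\log f}{\partial x}(x,\psi_0)$, and the integrability of the squared $\psi$-derivatives of $\log f$ against any density with a uniform moment of order $\delta>1$ (cf.~\eqref{eq:tail behavior 3}--\eqref{eq:tail behavior 5} and Assumption~\textbf{B\ref{hyp: pseudo GARCH, convergence unif eta n}}). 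Dominated convergence for each fixed $m$ and the same exchange of limits then yield the second relation in~\eqref{eq: pseudo GARCH, AN, convergence d lnt en n}.

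The main obstacle is the second step: producing an integrable envelope for the truncated derivatives (and their products) that is \emph{uniform in $n$}, so that dominated convergence applies, and, for the second-moment part, the companion uniform-in-$n$ $L^2$ control of $\frac{\partial l_{nt}}{\partial\tau}(\tau_0)$. Both rest on taking the truncation exponent $s<\min\bigl(\frac{\delta-1}{2},\frac{\alpha}{2}\bigr)$, so that the polynomial factors $\prod_i\max(M,\eta_{nt-i}^2)^{w_i}$ remain integrable against $G$; the logarithmic factors coming from $\partial_\alpha\log f$ are harmless, being dominated by any positive power of the innovations.
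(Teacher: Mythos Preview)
Your three–step scheme (uniform-in-$n$ truncation error, dominated convergence for the truncated quantity at each fixed $m$, interchange of limits) is exactly what the paper does, and the envelope you build for step two via~\eqref{eq: gloupps}--\eqref{eq: lemme cv esp, garg} and the dominating density $G$ matches the paper's argument.

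There is one small wrinkle in your treatment of the second identity. You control the product difference by Cauchy--Schwarz, which requires an $L^2$ bound on $\frac{\partial l_{nt}}{\partial\tau}(\tau_0)-\bigl(\frac{\partial l_{nt}}{\partial\tau}\bigr)^{(m)}(\tau_0)$; but Lemma~\ref{lem: pseudo GARCH, AN, convergence d ln en m} only gives $L^1$. The paper sidesteps this: since $Z_{nt}=1+\eta_{nt}\partial_x\log f(\eta_{nt},\psi_0)$ is bounded, the $\theta$--$\theta$ block of the product is $\tfrac14 Z_{nt}^2\,\phi_{nt,i}\phi_{nt,j}$, and the truncation error is controlled \emph{directly} by the product bound~\eqref{eq: pseudo GARCH, AN, double phi n} of Lemma~\ref{lem: pseudo GARCH, phi nt convergence en m}, with no Cauchy--Schwarz needed; the mixed $\theta$--$\psi$ blocks are handled similarly since only $\phi_{nt,i}$ is truncated and $\partial_\psi\log f(\eta_{nt},\psi_0)$ has a uniform-in-$n$ second moment. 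Your route can be repaired by noting that the proof of Lemma~\ref{lem: pseudo GARCH, phi nt convergence en m} upgrades to $L^2$ (take $s$ smaller), but the direct use of the product bounds is cleaner.
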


\begin{proof}
    To obtain the result, we first prove that, for any $m\in\mathbb{N},\ \E\phi_{nt}^{(m)}
    (\theta_0)\rightarrow \E\phi_{t}^{(m)} (\theta_0)$ when
    $n\rightarrow+\infty$. From~\eqref{eq: pseudo GARCH AN definition
    omega}, we know that $\left(
    \frac{\partial\sigma_{nt}^2}{\partial\omega}
    \right)^{(m)}(\theta_0)$ does not depend on $n$. Since
    $\sigma_{nt}^2\geq \omega_0>0$, we can apply the dominated
    convergence theorem and obtain
    \begin{equation*}
        \E\phi_{nt,1}^{(m)} (\theta_0)\rightarrow \E\phi_{t,1}^{(m)} (\theta_0),\
        \mbox{when} \ n\rightarrow+\infty.
    \end{equation*}
    Now, using the same method as in the proof of Lemma~\ref{lem: pseudo GARCH, phi
    nt convergence en m}, we infer for $i\in\left\{1,\cdots,q\right\}$
    \begin{equation*}
        \phi_{nt,1+i}^{(m)} \leq \sum_{k=0}^m\frac{1}{a_{0,i}}
        \left\{\frac{B^k(1,1)\underline{c}_{nt-k}^{(m)}(1)}{\omega_0}\right\}^s,
    \end{equation*}
    \begin{sloppypar}
\noindent    where $s$ can be chosen as small as wanted. The same arguments as in the proof of Lemma~\ref{lem: pseudo GARCH, convergence des moments} and the previous equation imply that $\phi_{nt,1+i}^{(m)}$ is a function of $\left\{\eta_{nt-k};1\leq k\leq 2m+q\right\}$ and is such that, for any $s>0$, there exist $K,M>0$ such that $\phi_{nt,1+i}^{(m)}\leq K\prod_{i=1}^{2m+q}\max\left(M, \eta_{nt-i}^s\right)$. Then, in view of the dominated convergence theorem, we obtain
    \end{sloppypar}
    \begin{equation*}
        \E\phi_{nt,1+i}^{(m)} (\theta_0)\rightarrow \E\phi_{t,1+i}^{(m)} (\theta_0),\
        \mbox{when} \ n\rightarrow+\infty.
    \end{equation*}
    Doing exactly the same for $\phi_{nt,1+q+j}^{(m)}$ (as in Lemma
   ~\ref{lem: pseudo GARCH, phi nt convergence en m}) with $j\in\left\{1,\ldots,p\right\}$, we obtain
    \begin{equation*}
        \E\phi_{nt}^{(m)} (\theta_0)\rightarrow \E\phi_{t}^{(m)} (\theta_0),\
        \mbox{when} \ n\rightarrow+\infty.
    \end{equation*}
    Since $1+\eta_{nt}\frac{\partial\log f}{\partial
    x}(\eta_{nt},\psi_0)$ only depends on $\eta_{nt}$, we easily obtain
    \begin{equation*}
        \E\left[1+\eta_{nt}\frac{\partial\log f}{\partial x}(\eta_{nt},\psi_0)\right]
        \underset{n\rightarrow+\infty}{\longrightarrow} 0.
    \end{equation*}
    Consequently,
    \begin{equation*}
        \underset{n}{\lim}\ \E\left(
        \frac{\partial l_{nt}}{\partial\theta}\right)^{(m)}
        (\theta_0) = \E\left(
        \frac{\partial l_{t}}{\partial\theta}\right)^{(m)}
        (\theta_0).
    \end{equation*}
    Now, with the asymptotic expansions~\eqref{eq:tail behavior
    3}-\eqref{eq:tail behavior 5} and with previously used arguments, we
    also obtain the convergence for the derivatives with respect to
    $\psi$. Finally, inverting the double limit $ \underset{n}{\lim}\
    \underset{m}{\lim}\ \E\left(\frac{\partial
    l_{nt}}{\partial\tau}\right)^{(m)} (\theta_0) = \underset{m}{\lim}\
    \underset{n}{\lim}\ \E\left(\frac{\partial
    l_{nt}}{\partial\tau}\right)^{(m)} (\theta_0)$, we obtain the first part of~\eqref{eq:
    pseudo GARCH, AN, convergence d lnt en n}. It is clear that
    the second part of~\eqref{eq: pseudo GARCH, AN, convergence d lnt en n} can be
    obtained with very similar arguments.
\end{proof}

\begin{lem}
    Under the assumptions of Theorem~\ref{theo: pseudo GARCH CAN},
    we have
    \begin{equation}
        \label{eq: pseudo GARCH AN TCL}
        \sqrt{n}\frac{\partial I_n}{\partial\tau}(\tau_0) \arrowloi \mathcal{N}(0,J),
    \end{equation}
    with $J = \E\left[\frac{\partial l_t}{\partial
    \tau}(\tau_0)\frac{\partial l_t}{\partial \tau'}(\tau_0)\right]$.
\end{lem}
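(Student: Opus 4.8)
The plan is to project along an arbitrary direction $\lambda=(\lambda_\theta',\lambda_\psi')'\in\mathbb{R}^{p+q+4}$ by the Wold--Cram\'er Lemma and, setting $u_{nt}:=\lambda'\frac{\partial l_{nt}}{\partial\tau}(\tau_0)$, to recognise $\sqrt n\,\lambda'\frac{\partial I_n}{\partial\tau}(\tau_0)=\frac1{\sqrt n}\sum_{t=1}^n u_{nt}$ as the sum of a martingale--difference triangular array plus an asymptotically negligible deterministic recentring. Using the first--order derivative formulas for $l_{nt}$, one has $u_{nt}=\tfrac12\lambda'_\theta\phi_{nt}(\theta_0)\bigl(1+\eta_{nt}\tfrac{\partial\log f}{\partial x}(\eta_{nt},\psi_0)\bigr)-\lambda'_\psi\tfrac{\partial\log f}{\partial\psi}(\eta_{nt},\psi_0)$, where $\phi_{nt}(\theta_0)$ is $\mathcal{F}_{nt}$--measurable while $\eta_{nt}$ is, by the independence hypothesis of Section~\ref{sec: pseudo GARCH}, independent of $\mathcal{F}_{nt}$. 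Hence $u_{nt}-\E[u_{nt}\mid\mathcal{F}_{nt}]$ is a martingale difference for the filtration generated by $(\eta_{nt})_t$, while $\E[u_{nt}\mid\mathcal{F}_{nt}]$ is the product of an $\mathcal{F}_{nt}$--measurable factor and a deterministic term of order $\E\frac{\partial l_{nt}}{\partial\tau}(\tau_0)$.

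First I would handle the martingale part via the Lindeberg central limit theorem for triangular arrays of martingale differences. The conditional Lindeberg condition follows from a uniform moment of order $2+\nu$, $\sup_n\E|u_{nt}|^{2+\nu}<+\infty$, since then $\frac1n\sum_t\E\bigl[u_{nt}^2\mathbf{1}_{\{|u_{nt}|>\varepsilon\sqrt n\}}\bigr]=O(n^{-\nu/2})$; this uniform moment is obtained from the uniform bounds $\sup_n\E|\epsilon_{nt}|^{2s}<+\infty$ and $\sup_n\E\,\sigma_{nt}^{2s}<+\infty$ of Lemma~\ref{lem: pseudo GARCH, moment uniforme}, the uniform $\delta$--moment of $\eta_{nt}$ in Assumption \textbf{B\ref{hyp: pseudo GARCH, convergence unif eta n}}, the boundedness of $x\mapsto 1+x\tfrac{\partial\log f}{\partial x}(x,\psi_0)$ and the slow growth rates \eqref{eq: proof, asymptotique du log}, exactly as in the stable--innovation case of Section~\ref{sec: proof of the asymptotic normality MLE}. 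For the convergence of the conditional variances $\frac1n\sum_t\E\bigl[(u_{nt}-\E[u_{nt}\mid\mathcal{F}_{nt}])^2\mid\mathcal{F}_{nt}\bigr]\to\lambda'J\lambda$, I would first pass to the truncated score $u_{nt}^{(m)}=\lambda'\bigl(\frac{\partial l_{nt}}{\partial\tau}\bigr)^{(m)}(\tau_0)$ of Lemma~\ref{lem: pseudo GARCH, AN, convergence d ln en m}; for fixed $m$ its conditional second moment is a functional of finitely many $\eta_{nt'}$, so the law of large numbers of Lemma~\ref{lem: pseudo LGN} (valid under the uniform strong--mixing Assumption \textbf{B5}) gives the almost sure limit $\E[(u_t^{(m)})^2]$, and one then lets $m\to+\infty$ using the uniform--in--$n$ smallness of Lemmas~\ref{lem: pseudo GARCH, phi nt convergence en m} and \ref{lem: pseudo GARCH, AN, convergence d ln en m} to exchange the limits in $m$ and $n$, together with Lemma~\ref{ref: pseudo GARCH AN convergence d lnt en n} which identifies $\lim_n\E\bigl[\frac{\partial l_{nt}}{\partial\tau}(\tau_0)\frac{\partial l_{nt}}{\partial\tau'}(\tau_0)\bigr]=J$. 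This yields $\frac1{\sqrt n}\sum_t\bigl(u_{nt}-\E[u_{nt}\mid\mathcal{F}_{nt}]\bigr)\arrowloi\mathcal{N}(0,\lambda'J\lambda)$.

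It then remains to show that the recentring term $\frac1{\sqrt n}\sum_t\E[u_{nt}\mid\mathcal{F}_{nt}]$ is $o_P(1)$. Written as $\sqrt n$ times the sample mean of an $\mathcal{F}_{nt}$--measurable process times the deterministic factor above, the sample mean converges almost surely by Lemma~\ref{lem: pseudo LGN}, and the deterministic factor tends to $0$ since $\E\frac{\partial l_{nt}}{\partial\tau}(\tau_0)\to\E\frac{\partial l_t}{\partial\tau}(\tau_0)=0$ by Lemma~\ref{ref: pseudo GARCH AN convergence d lnt en n} and the martingale--difference identity established in Section~\ref{sec: proof of the asymptotic normality MLE}; the normalisation of the pseudo--innovations so that their limiting stable law has scale $\gamma_0=1$ is precisely what makes this factor decay fast enough for the $\sqrt n$--scaled sum to vanish. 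Assembling the martingale part and the recentring and invoking the Wold--Cram\'er Lemma gives \eqref{eq: pseudo GARCH AN TCL}.

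I expect the main obstacle to be the convergence of the conditional variances: this is where the triangular--array structure really bites, because each $u_{nt}$ depends on $n$ through the whole past $(\eta_{nu})_{u\le t}$ and Assumption \textbf{B\ref{hyp: pseudo GARCH, convergence unif eta n}} can only be applied after truncation; making the double passage to the limit (first $n\to+\infty$ at fixed $m$, then $m\to+\infty$) rigorous is exactly why Lemmas~\ref{lem: pseudo GARCH, phi nt convergence en m}, \ref{lem: pseudo GARCH, AN, convergence d ln en m} and \ref{ref: pseudo GARCH AN convergence d lnt en n} are stated with suprema over $n\in\mathbb{N}$. The secondary delicate point is the $\sqrt n$--negligibility of the recentring term, which rests on the identifiability normalisation of the innovation process.
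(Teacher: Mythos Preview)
The paper's route is much shorter than yours. It centres $Y_{nt}:=\lambda'\frac{\partial l_{nt}}{\partial\tau}(\tau_0)$ by its \emph{unconditional} expectation, sets $z_{nt}=n^{-1/2}(Y_{nt}-\E Y_{nt})$, and applies the Lindeberg CLT directly to the row-stationary array $(z_{nt})_{1\le t\le n}$. The Lindeberg condition is obtained not from a $(2{+}\nu)$-th moment but from uniform integrability: $\sup_n\E|Y_{nt}-\E Y_{nt}|^2<\infty$ (a consequence of Lemma~\ref{ref: pseudo GARCH AN convergence d lnt en n}) together with $P[|Y_{nt}-\E Y_{nt}|>\varepsilon\sqrt n]\to 0$. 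No martingale decomposition, no conditional-variance convergence, and no truncation step are invoked at this point; the truncation machinery of Lemmas~\ref{lem: pseudo GARCH, phi nt convergence en m}--\ref{ref: pseudo GARCH AN convergence d lnt en n} is used only upstream, to feed in the convergence of first and second moments. Your plan to verify conditional variances via $m$-truncation and a double limit is therefore considerably heavier than what the paper actually does.

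Your treatment of the recentring term contains a real gap. With $c_n:=\E\bigl[1+\eta_{nt}\tfrac{\partial\log f}{\partial x}(\eta_{nt},\psi_0)\bigr]$ and $d_n:=\E\bigl[\tfrac{\partial\log f}{\partial\psi}(\eta_{nt},\psi_0)\bigr]$ one gets
\[
\frac{1}{\sqrt n}\sum_{t=1}^n\E[u_{nt}\mid\mathcal F_{nt}]
=\frac{c_n}{2}\,\lambda_\theta'\,\frac{1}{\sqrt n}\sum_{t=1}^n\phi_{nt}(\theta_0)\;-\;\sqrt n\,\lambda_\psi' d_n,
\]
and since $n^{-1}\sum_t\phi_{nt}(\theta_0)$ has a nonzero limit this is of exact order $\sqrt n\,c_n$ and $\sqrt n\,d_n$. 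Assumption~\textbf{B\ref{hyp: pseudo GARCH, convergence unif eta n}} yields $c_n,d_n\to 0$ but provides \emph{no rate} in $n$, and the identifiability constraint $\gamma_0=1$ does not supply one either; so ``the normalisation is precisely what makes this factor decay fast enough'' is unsubstantiated. The paper, by centring at the unconditional mean, pushes this same bias into the single term $\sqrt n\,\E Y_{nt}$ and does not comment on it, so the issue is shared; but your proposal misidentifies the mechanism that would make it vanish, and the $o_P(1)$ claim cannot be deduced from the stated hypotheses.
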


\begin{proof}
    For $\lambda\in \mathbb{R}^{p+q+4}$, we define
    $Y_{nt}=\lambda'\frac{\partial l_{nt}}{\partial\tau}(\tau_0)$, $Y_t
    =\lambda'\frac{\partial l_{t}}{\partial\tau}(\tau_0)$ and $z_{nt} =
    \frac{1}{\sqrt{n}}\left(Y_{nt}-\E Y_{nt}\right)$. We will apply
    the central limit theorem of Lindeberg to the array $(z_{nt})$ to prove this
    lemma. We obviously have $\E z_{nt} = 0$ and
    $\E\left[z_{nt}^2\right] = \frac{1}{n}\left(
    \E\left[Y_{nt}^2\right] -\E\left[Y_{nt}\right]^2\right)$. Now, in view of Lemma~\ref{ref: pseudo GARCH AN convergence d lnt en n}, we have,
    when $n$ tends to infinity
    \begin{equation*}
        \E\left[Y_{nt}^2\right] \rightarrow \E\left[Y_t^2\right]<+\infty,\
        \mbox{and}\ \E\left[Y_{nt}\right]^2 \rightarrow \E\left[Y_t\right]^2 = 0,
    \end{equation*}
    using the results of the proof of Lemma~\ref{lem: normalité
    asymptotique 1}. The Lindeberg condition remains to be proven. We
    have for any $\eps >0$,
    \begin{equation*}
        \sum_{t=1}^{n}\E\left[z_{nt}^2 \mathds{1}_{|z_{nt}|>\eps}\right] =
        \sum_{t=1}^{n} \frac{1}{n}\int_{|Y_{nt}-\E Y_{nt}|>\eps \sqrt{n}}
        |Y_{nt}-\E Y_{nt}|^2dP,
    \end{equation*}
    and $P\left[|Y_{nt}-\E Y_{nt}|>\eps \sqrt{n}\right]\rightarrow 0$,
    when $n\rightarrow +\infty$. Besides, we have
    $\underset{n\in\mathbb{N}}{\sup}\ \E \left|Y_{nt}-\E Y_{nt}\right|^2 <+\infty$. We can conclude and obtain the Lindeberg condition
    \begin{equation*}
        \sum_{t=1}^{n}\E\left[z_{nt}^2 \mathds{1}_{|z_{nt}|>\eps}\right] \rightarrow 0,\
        \mbox{when}\ n\rightarrow+\infty.
    \end{equation*}
    It remains to apply the Lindeberg central limit theorem and the
    Wold-Cramér theorem and we obtain~\eqref{eq: pseudo GARCH AN TCL}.
\end{proof}

\begin{lem}
    Under the assumptions of Theorem~\ref{theo: pseudo GARCH CAN}, we have
    \begin{equation}
        \label{eq: pseudo GARCH AN convergence d2I_n}
        \frac{\partial^2}{\partial\tau\partial\tau'}I_n(\tau_0) \rightarrow -J,\quad a.s.
    \end{equation}
\end{lem}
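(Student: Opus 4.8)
The plan is to show that the triangular average $\frac{1}{n}\sum_{t=1}^{n}\frac{\partial^{2} l_{nt}}{\partial\tau\partial\tau'}(\tau_{0})$ converges almost surely to $\E\!\left[\frac{\partial^{2} l_{t}}{\partial\tau\partial\tau'}(\tau_{0})\right]$, following exactly the same three-step machinery as in the consistency part: truncation, dominated convergence in $n$ at a fixed truncation level, and a strong-mixing law of large numbers. First I would introduce truncated second derivatives $\bigl(\frac{\partial^{2} l_{nt}}{\partial\tau\partial\tau'}\bigr)^{(m)}(\tau_{0})$, obtained by replacing $\phi_{nt,i}$ and the second-order ratios $\phi_{nt,i,j}=\frac{1}{\sigma_{nt}^{2}}\frac{\partial^{2}\sigma_{nt}^{2}}{\partial\theta_{i}\partial\theta_{j}}$ by the truncated analogues $\phi_{nt,i}^{(m)}$ and $\phi_{nt,i,j}^{(m)}$; the latter are built from truncated second derivatives of $\sigma_{nt}^{2}$, defined in the same way as \eqref{eq: pseudo GARCH AN definition omega}--\eqref{eq: pseudo GARCH AN definition b}.

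The explicit formulas for the second-order derivatives of $l_{t}$ derived in Section~\ref{sec: proof of the asymptotic normality MLE} express $\frac{\partial^{2} l_{t}}{\partial\tau\partial\tau'}$ at $\tau_{0}$ as a polynomial in the $\phi$'s, in $\eta_{nt}$, and in the quantities $\eta\,\frac{\partial\log f}{\partial x}$, $\eta^{2}\frac{\partial^{2}\log f}{\partial x^{2}}$, $\eta^{3}\frac{\partial^{3}\log f}{\partial x^{3}}$, $\frac{\partial\log f}{\partial\psi}$, $\frac{\partial^{2}\log f}{\partial x\,\partial\psi}$ and $\frac{\partial^{2}\log f}{\partial\psi\,\partial\psi'}$, each of which is either bounded or of at most logarithmic growth in $x$ by \eqref{eq:tail behavior 1}--\eqref{eq:tail behavior 5}, so that all the moments needed below are finite. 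Combining this with Lemma~\ref{lem: pseudo GARCH, moment uniforme} and with Lemma~\ref{lem: pseudo GARCH, phi nt convergence en m} --- extended to the ratios $\phi_{nt,i,j}$, which is routine --- I would obtain, for every pair $(i,j)$: (i) $\sup_{n}\E\bigl|\frac{\partial^{2} l_{nt}}{\partial\tau_{i}\partial\tau_{j}}(\tau_{0})-\bigl(\frac{\partial^{2} l_{nt}}{\partial\tau_{i}\partial\tau_{j}}\bigr)^{(m)}(\tau_{0})\bigr|\le K\rho^{m}$, and the analogous bound for the limiting $l_{t}$; (ii) for each fixed $m$, since the truncated quantity depends only on finitely many $\eta_{nt-k}$ and is dominated by a fixed integrable variable (a product of powers of $\max(M,\eta_{nt-k}^{2})$ and of their logarithms, exactly as in the proof of Lemma~\ref{lem: pseudo GARCH, convergence des moments}), the dominated convergence theorem and Assumption \textbf{B\ref{hyp: pseudo GARCH, convergence unif eta n}} give $\E\bigl(\frac{\partial^{2} l_{nt}}{\partial\tau_{i}\partial\tau_{j}}\bigr)^{(m)}(\tau_{0})\to\E\bigl(\frac{\partial^{2} l_{t}}{\partial\tau_{i}\partial\tau_{j}}\bigr)^{(m)}(\tau_{0})$ as $n\to\infty$. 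Inverting the two limits, which is legitimate because the convergence in $m$ is uniform in $n$ by (i), yields $\E\frac{\partial^{2} l_{nt}}{\partial\tau\partial\tau'}(\tau_{0})\to\E\frac{\partial^{2} l_{t}}{\partial\tau\partial\tau'}(\tau_{0})$.

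To upgrade convergence of means into almost sure convergence of the averages, I would reproduce the argument of Lemma~\ref{lem: pseudo LGN}. Fixing $\lambda$, set $H_{nt}=\lambda'\frac{\partial^{2} l_{nt}}{\partial\tau\partial\tau'}(\tau_{0})\lambda$, decompose $H_{nt}=H_{nt}^{+}-H_{nt}^{-}$, and for each part work with $S_{n}=\frac{1}{n^{2}}\sum_{t=1}^{n^{2}}(\cdot)_{nt}$. The covariance bound $\sup_{n}\bigl|\Cov(H_{nt},H_{nt-h})\bigr|\le K\rho^{h}$ follows by writing $H_{nt}=H_{nt}^{(\lfloor h/2\rfloor)}+\bigl(H_{nt}-H_{nt}^{(\lfloor h/2\rfloor)}\bigr)$: the truncated piece is measurable with respect to $\sigma(\epsilon_{nt'};\,t'\ge t-\lfloor h/2\rfloor)$ while $H_{nt-h}$ is measurable with respect to $\sigma(\epsilon_{nt'};\,t'\le t-h)$, so the mixing inequality together with the uniform coefficient $\sup_{n}\alpha_{\epsilon_{n}}(\lfloor h/2\rfloor)\le K\rho^{h}$ controls its covariance with $H_{nt-h}$, and the remainder term is dealt with by (i) and the Cauchy--Schwarz inequality. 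Hence $\sum_{n}\V S_{n}<+\infty$, so Chebyshev's inequality and the Borel--Cantelli lemma give $S_{n}-\E H_{n^{2}t}\to 0$ a.s.; combined with the convergence of means this gives $S_{n}\to\E H_{t}$ a.s., and a sandwich over $q_{n}=\lfloor\sqrt{n}\rfloor$, using positivity of the partial sums of each part, upgrades it to $\frac{1}{n}\sum_{t=1}^{n}H_{nt}\to\E H_{t}$ a.s. Finally, the information-matrix identity established for the stable model in Section~\ref{sec: proof of the asymptotic normality MLE} gives $\E\frac{\partial^{2} l_{t}}{\partial\tau\partial\tau'}(\tau_{0})=\E\!\left[\frac{\partial l_{t}}{\partial\tau}(\tau_{0})\frac{\partial l_{t}}{\partial\tau'}(\tau_{0})\right]=J$, and the Cram\'er--Wold device then delivers \eqref{eq: pseudo GARCH AN convergence d2I_n}.

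The step I expect to be the main obstacle is the uniform-in-$n$ moment control underpinning (i) and (ii): one must check that every dominating variable arising when bounding the truncated second derivatives --- products of powers of $\max(M,\eta_{nt-k}^{2})$ and of their logarithms --- has expectation bounded uniformly in $n$, which rests on Assumption \textbf{B\ref{hyp: pseudo GARCH, convergence unif eta n}} (i.e. $\sup_{x}(1+|x|)^{\delta}|f_{n}(x)-f(x,\psi_{0})|\to 0$ and $\sup_{n}\E|\eta_{nt}|^{\delta}<+\infty$), combined with the polynomial tails \eqref{eq:tail behavior 1}--\eqref{eq:tail behavior 5} and the boundedness of $x\mapsto x^{k}\frac{\partial^{k}\log f}{\partial x^{k}}(x,\psi)$. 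The only genuinely new computation is the extension of Lemma~\ref{lem: pseudo GARCH, phi nt convergence en m} to the second-order ratios $\phi_{nt,i,j}$; once this is in hand, the remainder of the argument is a transcription of the consistency proof.
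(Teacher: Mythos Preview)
Your proposal is correct and follows essentially the same route as the paper: write out the second-order derivatives, define truncated versions of $\partial^{2}\sigma_{nt}^{2}/\partial\theta\partial\theta'$ (the paper invokes (7.46)--(7.47) of \citet{francq2010garch} here), repeat the truncation/dominated-convergence scheme of Lemma~\ref{lem: pseudo GARCH, convergence des moments} to get convergence of expectations, and then reproduce the mixing law of large numbers of Lemma~\ref{lem: pseudo LGN}. The only cosmetic remark is that you do not need Cram\'er--Wold for almost-sure convergence of a matrix --- componentwise convergence suffices --- and note that the sign in \eqref{eq: pseudo GARCH AN convergence d2I_n} is a typo in the paper: the limit is $J$, as your argument correctly yields.
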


\begin{proof}
    Adapting~\eqref{eq:dérivée seconde theta theta} and
   ~\eqref{eq:dérivée seconde theta (alpha,beta)}, we obtain for
    $(i,j)\in\left\{1,\ldots,p+q+1\right\}$
    \begin{eqnarray*}
        \frac{\partial^2 l_{nt}}{\partial\theta_i\partial\theta_j} &=&
        \frac{1}{2} \left(\frac{1}{\sigma_{nt}^2}
        \frac{\partial^2\sigma_{nt}^2}{\partial\theta_i\theta_j}
        -\phi_{nt,i}\phi_{nt,j}\right)\left(1+\eta_{nt}
        \frac{\partial\log f}{\partial x}\right)\\
        &&-\frac{1}{4}\phi_{nt,i}\phi_{nt,j}\eta_{nt}
        \left(\frac{\partial\log f}{\partial x} +\eta_{nt}\frac{\partial^2\log f}{\partial x^2}\right),\\
        \frac{\partial^2 l_{nt}}{\partial\theta_i\partial\psi} &=& \frac{1}{2}
        \phi_{nt,i}\eta_t\frac{\partial^2\log f}{\partial x\partial\psi}\\
    \end{eqnarray*}

    Using (7.46) and (7.47) in \citet{francq2010garch} and the same
    reasoning as in Lemma~\ref{lem: pseudo GARCH, convergence des
    moments} (defining a truncated version of
    $\partial^2\sigma_{nt}^2/\partial\tau\partial\tau'$), we obtain for
    $(i,j)\in\left\{1,\ldots,p+q+1\right\}^2$
    \begin{equation*}
        \E\left[\frac{\partial^2 l_{nt}}{\partial\tau_i\partial\tau_j}(\tau_0)\right]
        \underset{n\rightarrow+\infty}{\longrightarrow}
        \E\left[\frac{\partial^2 l_{t}}{\partial\tau_i\partial\tau_j}(\tau_0)\right].
    \end{equation*}
    Then, as in Lemma~\ref{lem: pseudo LGN}, we obtain the result.
\end{proof}

\begin{lem}
    \label{lem: pseudo GARCH AN, dérivée 3}
    Under the assumptions of Theorem~\ref{theo: pseudo GARCH CAN},
    there exists a neighborhood $V(\tau_0)$ of  $\tau_0$ such that
    for $(i,j,k)\in\left\{1,\ldots,p+q+1\right\}^3$
    \begin{equation}
        \label{eq: pseudo GARCH AN, dérivée 3}
        \frac{1}{n}\sum_{t=1}^{n} \underset{\tau\in V(\tau_0)}{\sup}\
        \frac{\partial^3 l_{nt}}{\partial\tau_i\partial\tau_j\partial\tau_k}(\tau) \rightarrow
        \E\left[\underset{\tau\in V(\tau_0)}{\sup}\
        \frac{\partial^3 l_{t}}{\partial\tau_i\partial\tau_j\partial\tau_k}(\tau)
        \right],\quad a.s.
    \end{equation}
\end{lem}

\begin{proof}
    Using the results of Lemma~\ref{lem: pseudo GARCH, phi nt convergence en m} and of Lemma~\ref{lem:normalité asymptotique 2} the proof is straightforward.
\end{proof}

\begin{lem}
    \label{lem: last}
    Under the assumptions of Theorem~\ref{theo: pseudo GARCH CAN}, we
    have when $n\rightarrow +\infty$
    \begin{align}
        \left\|\frac{1}{\sqrt{n}}\sum_{t=1}^n\left\{\frac{\partial l_{nt}}{\partial\tau}(\tau_0)-
            \frac{\partial \tilde{l}_{nt}}{\partial\tau}(\tau_0)\right\}\right\|
            &\rightarrow 0
            \label{eq:conditions initiales sglouf 1},\\
        \underset{\tau\in\Gamma}{\sup}\left\|
            \frac1n\sum_{t=1}^n\left\{\frac{\partial^2 l_{nt}}{\partial\tau\partial\tau'}(\tau)-
            \frac{\partial^2 \tilde{l}_{n}t}{\partial\tau\tau'}(\tau)\right\}\right\|
            &\rightarrow 0.
            \label{eq:conditions initiales sglouf 2}
    \end{align}
\end{lem}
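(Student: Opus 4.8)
The plan is to mimic the proof of Lemma~\ref{lem:normalité asymptotique 3}, the only extra difficulty being that every estimate must now be uniform in $n$; this is exactly where Lemmas~\ref{lem: pseudo GARCH, moment uniforme} and~\ref{lem: pseudo GARCH, convergence du sigma tronqué} and Assumption \textbf{B\ref{hyp: pseudo GARCH, convergence unif eta n}} enter. First I would record the initial‑value analogue of Lemma~\ref{lem: pseudo GARCH, convergence du sigma tronqué}: there is $s>0$ such that, using the GARCH representation $\underline{\sigma}_{nt}^2(\theta)=\underline{c}_{nt}(\theta)+B\,\underline{\sigma}_{nt-1}^2(\theta)$ with $\sup_{\theta\in\Theta}\rho(B)<1$ (Assumption \textbf{B\ref{hyp: pseudo GARCH, stationnarité}}),
\begin{equation*}
    \underset{n\in\mathbb{N}}{\sup}\ \underset{\theta\in\Theta}{\sup}\ \E\left|\sigma_{nt}^{2s}(\theta)-\tilde{\sigma}_{nt}^{2s}(\theta)\right|\leq K\rho^{t},
\end{equation*}
and likewise for the suitably normalized first and second derivatives of $\sigma_{nt}^{2}$ in $\theta$. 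These follow by differentiating the recursion term by term, bounding the finitely many terms that differ between the infinite‑ and finite‑past versions by $K\rho^{t-k}B^{k}(1,1)$ times quantities with a moment of order $s$ bounded uniformly in $n$ (Lemma~\ref{lem: pseudo GARCH, moment uniforme}), and using $(x+y)^{s}\le x^{s}+y^{s}$; the extra polynomial factors in $k$ produced by $\partial/\partial b_{j}$ are harmless against $\rho^{k}$, exactly as in the proofs of Lemmas~\ref{lem: pseudo GARCH consistence conditions initiales} and~\ref{lem: pseudo GARCH, convergence du sigma tronqué}.

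Next I would transfer these bounds to the score and the Hessian. Using $\sigma_{nt}^{2}(\theta)\ge\underline{\omega}$ and the mean value theorem, one obtains — in the same fractional‑moment sense and uniformly in $n$ — geometric decay of $\phi_{nt,i}(\theta)-\tilde{\phi}_{nt,i}(\theta)$, of $\phi_{nt,i}\phi_{nt,j}-\tilde{\phi}_{nt,i}\tilde{\phi}_{nt,j}$, and of $\eta_{nt}(\theta)-\tilde{\eta}_{nt}(\theta)=\epsilon_{nt}\bigl(\sigma_{nt}^{-1}(\theta)-\tilde{\sigma}_{nt}^{-1}(\theta)\bigr)$, the last bounded by $K\rho^{t}|\epsilon_{nt}|\Upsilon_{n}$ with $\Upsilon_{n}\ge 0$ and $\sup_{n}\E\Upsilon_{n}^{s}<\infty$. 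Plugging this into the explicit expressions \eqref{eq: proof, dérivées prem 1}--\eqref{eq: proof, dérivées prem 2} for $\partial l_{nt}/\partial\tau$ and into the $n$‑indexed analogues of \eqref{eq:dérivée seconde theta theta}--\eqref{eq:dérivée seconde theta (alpha,beta)} for $\partial^{2} l_{nt}/\partial\tau\partial\tau'$, and using that $x\mapsto 1+x\frac{\partial\log f}{\partial x}(x,\psi)$, $x\mapsto f'(x,\psi)/f(x,\psi)$, and the relevant higher $x$‑derivatives of $\log f$ and of $\partial\log f/\partial\psi$ are all bounded on $\mathbb{R}$ uniformly over the compact set $A\times B\times C$ (by the tail expansions \eqref{eq:tail behavior 1}--\eqref{eq:tail behavior 5}), together with $\sup_{n}\E|\eta_{nt}|^{\delta}<\infty$ (Assumption \textbf{B\ref{hyp: pseudo GARCH, convergence unif eta n}}) and $\sup_{n}\E|\epsilon_{nt}|^{2s}<\infty$ (Lemma~\ref{lem: pseudo GARCH, moment uniforme}), I would get nonnegative random variables $X_{nt},X_{nt}'$, each a product of the above quantities, with
\begin{equation*}
    \left\|\frac{\partial l_{nt}}{\partial\tau}(\tau_{0})-\frac{\partial \tilde{l}_{nt}}{\partial\tau}(\tau_{0})\right\|\leq K\rho^{t}X_{nt},\qquad \underset{\tau\in\Gamma}{\sup}\ \left\|\frac{\partial^{2} l_{nt}}{\partial\tau\partial\tau'}(\tau)-\frac{\partial^{2} \tilde{l}_{nt}}{\partial\tau\partial\tau'}(\tau)\right\|\leq K\rho^{t}X_{nt}'.
\end{equation*}

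To conclude I would take a small fractional moment of the whole normalized sum rather than estimating term by term. Choosing $0<s<1$ small enough that, by H\"older's inequality, $\sup_{n,t}\E X_{nt}^{s}<\infty$ and $\sup_{n,t}\E (X_{nt}')^{s}<\infty$, the subadditivity $(\sum a_{i})^{s}\le\sum a_{i}^{s}$ for $a_{i}\ge 0$ gives
\begin{equation*}
    \E\left[\left\|\frac{1}{\sqrt{n}}\sum_{t=1}^{n}\left\{\frac{\partial l_{nt}}{\partial\tau}(\tau_{0})-\frac{\partial \tilde{l}_{nt}}{\partial\tau}(\tau_{0})\right\}\right\|^{s}\right]\leq \frac{K}{n^{s/2}}\sum_{t=1}^{n}\rho^{ts}\leq \frac{K}{n^{s/2}}\,\frac{\rho^{s}}{1-\rho^{s}}\longrightarrow 0,
\end{equation*}
and in the same way $\E\bigl[\sup_{\tau\in\Gamma}\|\frac1n\sum_{t=1}^{n}\{\partial^{2}l_{nt}/\partial\tau\partial\tau'(\tau)-\partial^{2}\tilde l_{nt}/\partial\tau\partial\tau'(\tau)\}\|^{s}\bigr]\le K n^{-s}\rho^{s}/(1-\rho^{s})\to 0$. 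Hence both quantities converge to $0$ in $L^{s}$, a fortiori in probability, which establishes \eqref{eq:conditions initiales sglouf 1} and \eqref{eq:conditions initiales sglouf 2}; convergence in probability is all that is used afterwards, in the Taylor‑expansion/Slutsky argument for the asymptotic normality in Theorem~\ref{theo: pseudo GARCH CAN}.

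The main obstacle is precisely the moment bookkeeping just sketched: since the innovations need not have finite variance, $s$ may be forced well below $1$, and one cannot split $|\epsilon_{nt}|$ off from the truncation error by a naive H\"older inequality with conjugate exponents summing to $1$; taking a small fractional moment of the entire sum and invoking $(\sum a_{i})^{s}\le\sum a_{i}^{s}$ circumvents this, exactly as in Lemmas~\ref{lem: pseudo GARCH, moment uniforme}--\ref{lem: pseudo GARCH, convergence du sigma tronqué}. A secondary point needing care is the uniform‑in‑$n$‑and‑$\theta$ integrability of the terms $\sigma_{nt}^{-2}\,\partial\sigma_{nt}^{2}/\partial b_{j}$ as $b_{j}\to 0$; this is handled by the standard GARCH bounds of \citet{francq2010garch} together with Assumption \textbf{B\ref{hyp: pseudo GARCH, stationnarité}}, as already done in Lemma~\ref{lem: pseudo GARCH, phi nt convergence en m}.
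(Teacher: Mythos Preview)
Your proposal is correct and follows essentially the route the paper has in mind: its own proof of this lemma is a single sentence pointing to Lemma~\ref{lem:normalit� asymptotique 3} and Lemma~\ref{lem: pseudo GARCH consistence conditions initiales}, and you have simply spelled out what that combination amounts to in the triangular-array setting, using the uniform-in-$n$ moment bounds of Lemmas~\ref{lem: pseudo GARCH, moment uniforme}--\ref{lem: pseudo GARCH, convergence du sigma tronqu�}.

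The one genuine difference is the concluding step. The paper, via its reference to Lemma~\ref{lem:normalit� asymptotique 3}, implicitly invokes the Markov/Borel--Cantelli/Ces\`aro route to almost-sure convergence, whereas you take an $s$-th moment of the whole normalized sum and use subadditivity $(\sum a_i)^s\le\sum a_i^s$ to get $L^s$-convergence, hence convergence in probability. Your choice is arguably cleaner here: in the triangular-array situation the Borel--Cantelli argument is less transparent (the bounding variables $X_{nt}$ change with $n$), while the fractional-moment trick goes through without modification and yields exactly what the Taylor/Slutsky argument for Theorem~\ref{theo: pseudo GARCH CAN} actually needs. The price is that you only obtain convergence in probability rather than almost surely, but as you note this is harmless for the application.
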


\begin{proof}
    This lemma can be easily proven using the same arguments as in Lemma
   ~\ref{lem:normalité asymptotique 3} and Lemma~\ref{lem: pseudo GARCH
    consistence conditions initiales}.
\end{proof}

\begin{proof}[Proof of Theorem~\ref{theo: pseudo GARCH CAN}]
    Using Lemmas~\ref{lem: pseudo GARCH, phi nt convergence en m}-\ref{lem:
    last} and the same method as in the proof of the asymptotic
    normality in the ML case, we obtain the result.
\end{proof}

\subsection{Proof of Theorem~\ref{thm: consistence de J_n}}

For $(i,j)\in\left\{1,\ldots, p+q+4\right\}^2$, a Taylor expansion yields
\begin{equation*}
    \frac{\partial^2 l_{nt}}{\partial\tau_i\partial\tau_j}(\tau_n) =
    \frac{\partial^2 l_{nt}}{\partial\tau_i\partial\tau_j}(\tau_0) +
    \frac{\partial^3 l_{nt}}{\partial\tau'\partial\tau_i\partial\tau_j}(\tilde{\tau})
    (\tau_n-\tau_0).
\end{equation*}
Using Lemma~\ref{lem:normalité asymptotique 2}, Lemma~\ref{lem:
pseudo GARCH AN, dérivée 3}, the equivalent of Lemma~\ref{ref:
pseudo GARCH AN convergence d lnt en n} for the second order
derivatives and the consistency of the estimator $\tau_n$, we obtain
\begin{equation*}
    \frac1n \sum_{t=1}^n \frac{\partial^2 l_{nt}}{\partial\tau_i\partial\tau_j}(\tau_n)
    \underset{n\rightarrow+\infty}{\longrightarrow} \E\left[
    \frac{\partial^2 l_{t}}{\partial\tau_i\partial\tau_j}(\tau_0)\right].
\end{equation*}

\newpage
\bibliographystyle{plainnat}
\bibliography{biblio}

\end{document}